\newtheorem{theorem}{Theorem}[section]
\newtheorem{lemma}[theorem]{Lemma}
\newtheorem{corollary}[theorem]{Corollary}
\newtheorem{proposition}[theorem]{Proposition}
\theoremstyle{definition}
\newtheorem{definition}[theorem]{Definition}
\newtheorem{problem}[theorem]{Problem}
\newtheorem{remark}[theorem]{Remark}
\newcommand{\del}{\partial}
\newcommand{\Z}{\mathbb{Z}}
\newcommand{\R}{\mathbb{R}}
\newcommand{\C}{\mathbb{C}}
\newcommand{\CP}{\mathbb{CP}}
\newcommand{\M}{\mathcal{M}}
\newcommand{\D}{\mathbb{D}}
\DeclareMathOperator{\Crit}{Crit}
\DeclareMathOperator{\Critv}{Critv}
\DeclareMathOperator{\Symp}{Symp}
\DeclareMathOperator{\PD}{PD}
\DeclareMathOperator{\Int}{Int}
\renewcommand{\O}{\mathcal{O}}
\newcommand{\red}[1]{\textcolor{magenta}{#1}}
\title{A four-dimensional mapping class group relation}
\author[Takahiro Oba]{Takahiro Oba}
\address{Department of Mathematics, Kyoto University, Kyoto 606-8502, Japan}
\email{taka.oba@math.kyoto-u.ac.jp}
\address{Current address: Department of Mathematics, The University of Osaka, Toyonaka, Osaka 560-0043, Japan}
\email{taka.oba@math.sci.osaka-u.ac.jp}
\date{\today}
\begin{document}

\maketitle


\begin{abstract}
In the symplectic mapping class group of a $4$-dimensional Weinstein domain, we give a relation between two products of (right-handed) Dehn twists via holomorphic curve techniques.
A key ingredient of the construction is a solution to the symplectic isotopy problem on symplectic submanifolds in del Pezzo surfaces. 
In the appendix, we provide an alternative proof of a relation between a fibered Dehn twist and a product of Dehn twists. 
\end{abstract}

\section{Introduction}

Let $(W,\omega)$ be a symplectic manifold possibly with boundary. 
A \textit{compactly supported symplectic automorphism} is a diffeomorphism $\varphi \colon W \rightarrow W$ such that $\varphi^{*}\omega=\omega$, and its support is compact and lies in the interior of $W$. 
Let $\Symp_c(W, \omega)$ denote the group of compactly supported symplectic automorphisms of $(W,\omega)$ equipped with the $C^{\infty}$-topology. 
This paper is concerned with relations in the \textit{symplectic mapping class group} $\pi_0(\Symp_c(W, \omega))$. 
In particular, we study the case where $(W,\omega)$ is a \textit{Weinstein domain} (see Definition \ref{def: Weinstein}). 

Relations in $\pi_0(\Symp_{c}(W, \omega))$ help us to understand the topology of Weinstein fillings of contact manifolds. 
Thanks to results of Akbulut and Ozbagci \cite{AO}, Loi and Piergallini \cite{LP} and Giroux and Pardon \cite{GP}, every Weinstein domain admits a Lefschetz fibration with regular fibers symplectomorphic to a codimension $2$ Weinstein domain, say $(W, \omega)$. 
Hence, Lefschetz fibrations relate $\pi_0(\Symp_c(W,\omega))$ to Weinstein domains via their monodromies. 
When $W$ is a surface, Moser's trick tells us that $\pi_{0}(\Symp_{c}(W, \omega))$ is isomorphic to the smooth mapping class group of $W$ (see e.g. \cite[Lemma 7.3.2]{Gei}). 
Since the latter group is well studied, Ozbagci and Stipsicz \cite{OS}, for example, handled relations in smooth mapping class groups of surfaces and constructed contact $3$-manifolds with infinitely many Weinstein fillings up to homotopy. 

When $\dim W \geq 4$, applying the above results on Lefschetz fibrations is not yet practical for constructing various Weinstein domains. 
Although symplectic mapping class groups of some $4$-dimensional Weinstein domains are understood  (e.g. \cite{SeiDehn}, \cite{Evans} and \cite{Wu}), still little is known about those groups of more general Weinstein domains and especially relations in them.

Relations in symplectic mapping class groups are often derived from fibration-like structures on symplectic manifolds.
Seidel \cite{SeiKnot} extracted a braid relation about two Dehn twists along Lagrangian spheres from Lefschetz fibrations. 
Lefschetz pencils and fibrations yield a relation between a fibered Dehn twist and a product of (right-handed) Dehn twists: \cite{Aur}, \cite{Seidel}, \cite{Go}, \cite{AA} (see also Theorem \ref{thm: boundary-interior}). 
This kind of relation involves many important ones in smooth mapping class groups of surfaces such as chain relations \cite{AA} and the lantern relation \cite{AS}. 
Note that any fibered Dehn twist defined on a surface is actually a Dehn twist. 
In another direction, Keating \cite{Keating} compared relations in higher-dimensional symplectic mapping class groups with $2$-dimensional ones in terms of Lefschetz fibrations. 

Reviewing constructions of $4$-dimensional Weinstein domains, relations between two products of \textit{different numbers} of Dehn twists such as the lantern relation play a crucial role in construction: e.g. \cite{EG}, \cite{EMVH}, \cite{BVH} and \cite{DKP}. 
The main theorem in this paper is to provide the first example of such relations in symplectic mapping class groups of $4$-dimensional Weinstein domains.

\begin{theorem}\label{thm: relation}
There exists a $4$-dimensional Weinstein domain $(W,\omega)$ and Lagrangian spheres $L_{1,1}, \cdots, L_{1,4}, L_{2,1}, \cdots, L_{2,6}$ such that 
the two products of (right-handed) Dehn twists along these Lagrangian spheres, $\tau_{L_{1,1}} \circ \cdots \circ \tau_{L_{1,4}}$ and $\tau_{L_{2,1}} \circ \cdots \circ \tau_{L_{2,6}}$, are symplectically isotopic; in other words, we have
$$[\tau_{L_{1,1}} \circ \cdots \circ \tau_{L_{1,4}}] = [\tau_{L_{2,1}} \circ \cdots \circ \tau_{L_{2,6}}] \in \pi_{0}(\Symp_{c}(W, \omega)).$$
\end{theorem}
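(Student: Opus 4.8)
The plan is to realize the desired relation by transporting a classical surface mapping class group relation — here the combination of a lantern relation (four Dehn twists) against a chain-type relation (six Dehn twists), both expressing the same mapping class of a planar or low-genus surface — through a Lefschetz fibration whose total space is the sought-after four-dimensional Weinstein domain $(W,\omega)$. More precisely, I would start from a rational or ruled symplectic four-manifold, such as a Del Pezzo surface, equipped with a Lefschetz pencil or fibration over $S^2$; blowing up and removing a suitable fiber and a neighborhood of a section produces a Weinstein domain $(W,\omega)$ of real dimension four, whose generic fiber is a surface $\Sigma$ and whose vanishing cycles are Lagrangian spheres. The monodromy factorization of such a fibration, read around the disk, gives a product of Dehn twists along these vanishing cycles that is isotopic to the identity (or to a fixed boundary twist); two different such pencils on the \emph{same} four-manifold, or two different positions of the removed fiber, then yield two factorizations which must agree in $\pi_0(\Symp_c(W,\omega))$, and counting nodes gives the asymmetry $4$ versus $6$.

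The key steps, in order, are: (i) fix the ambient closed symplectic four-manifold and the two competing pencils/fibrations, arranging that after deleting a regular fiber and a neighborhood of a chosen section one obtains literally the \emph{same} Weinstein domain $(W,\omega)$ with its symplectic structure — this is where the hypothesis that $W$ be four-dimensional, hence fibers are surfaces, is exploited, and where the matching of the two complements must be made canonical rather than merely diffeomorphic; (ii) identify the vanishing cycles of each pencil as embedded Lagrangian spheres $L_{1,1},\dots,L_{1,4}$ and $L_{2,1},\dots,L_{2,6}$ in $W$, tracking the ordering so that the monodromy words are genuinely $\tau_{L_{1,1}}\circ\cdots\circ\tau_{L_{1,4}}$ and $\tau_{L_{2,1}}\circ\cdots\circ\tau_{L_{2,6}}$ up to the boundary monodromy contributed by the section; (iii) invoke the symplectic isotopy result on symplectic submanifolds in Del Pezzo surfaces (advertised in the abstract) to conclude that the two pencils are, after symplectic isotopy, compatible in the sense needed to compare their monodromies — i.e., that the abstract relation on the surface mapping class group actually lifts to a relation in $\pi_0(\Symp_c(W,\omega))$ rather than being obstructed; (iv) cancel the common boundary-monodromy contribution to extract the stated equality of the two products of Dehn twists in $\pi_0(\Symp_c(W,\omega))$.

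The main obstacle I expect is step (iii): passing from a relation that holds in the smooth (or symplectic) mapping class group of the \emph{fiber surface} $\Sigma$ to an honest relation among symplectomorphisms of the four-dimensional total space $W$. The subtlety is that a word in Dehn twists being trivial in $\pi_0(\Symp_c(\Sigma))$ does not by itself force the corresponding product of generalized Dehn twists to be trivial in $\pi_0(\Symp_c(W))$ — one needs the two Lefschetz fibrations to be deformation-equivalent as symplectic fibrations, and this is exactly what the solution to the symplectic isotopy problem buys: any symplectic curve in the relevant Del Pezzo surface in a fixed homology class is symplectically isotopic to a holomorphic (hence standard) one, so the two pencils can be simultaneously holomorphically modeled and their monodromies compared on the nose. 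Establishing this uniqueness, and checking that the isotopy can be taken to respect the deleted fiber and section so that it descends to a compactly supported symplectic isotopy of $W$, is where the holomorphic curve techniques alluded to in the abstract do the real work; the rest is bookkeeping of monodromy factorizations and a handle-by-handle identification of the Weinstein complement.
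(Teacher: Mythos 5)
There is a genuine gap, and it is a dimensional one. In the theorem, the Lagrangian spheres $L_{i,j}$ are Lagrangian $2$-spheres inside the four-dimensional Weinstein domain $W$, and the Dehn twists $\tau_{L_{i,j}}$ are compactly supported symplectomorphisms \emph{of $W$}. Your plan instead equips $W$ itself with a Lefschetz fibration whose fibers are surfaces $\Sigma$ and reads off a monodromy factorization ``around the disk.'' But the vanishing cycles of such a fibration are circles in the two-dimensional fibers, and the monodromy is a product of surface Dehn twists, i.e., an element of $\pi_0(\Symp_c(\Sigma))$ --- not of $\pi_0(\Symp_c(W))$. So a lantern- or chain-type relation harvested this way lives in the wrong group entirely. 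You flag this in step (iii) as the ``main obstacle,'' but the proposed fix (that the symplectic isotopy result lets one ``compare monodromies on the nose'') does not close the gap: there is no canonical correspondence between circles in the fiber surface and Lagrangian spheres in $W$ by which the relation could be transported, and the paper even remarks that relations in $\pi_0(\Symp_c(\CP^2\#3\overline{\CP}^2,\omega'))$ cannot be imported directly because Dehn twists on a Liouville domain have infinite order.

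The actual mechanism in the paper is one dimension higher and uses a different kind of coincidence. The Weinstein domain $W$ is the complement of a tubular neighborhood of an anti-canonical genus-one curve $B$ in the Del Pezzo surface $M\cong\CP^2\#3\overline{\CP}^2$ of degree $6$. Crucially, $M$ appears as an ample divisor in \emph{two distinct complex $3$-folds}: $X_1=\CP^1\times\CP^1\times\CP^1$ and $X_2$ a $(1,1)$ hypersurface in $\CP^2\times\CP^2$. Each ambient $3$-fold carries a Lefschetz pencil containing $M$, and removing a neighborhood of a regular fiber yields a six-dimensional Lefschetz fibration $\pi'_j$ whose \emph{fibers} are the four-manifold $W_j$. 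The Lagrangian spheres $L_{i,j}$ are the vanishing cycles of these $\pi'_j$ (genuine Lagrangian $2$-spheres in the four-dimensional fiber), and the global monodromy of each $\pi'_j$ is a fibered Dehn twist $\tau_{\del W_j}$ (Theorem~\ref{thm: boundary-interior}). The counts $4$ and $6$ come not from lantern or chain relations but from the Euler-characteristic formula of Lemma~\ref{lem: number} applied to $X_1$ and $X_2$ (both equal to $(-1)^3(\chi(X_j)-2\chi(M)+\chi(B))$, giving $4$ and $6$). The role of the symplectic isotopy theorem (Theorem~\ref{thm: isotopy}) and Proposition~\ref{prop: Del Pezzo symplectomorphism} is to produce a symplectomorphism $(W_1,\omega_1)\cong(W_2,\omega_2)$ that is also a strict contactomorphism on the boundary, so the two fibered Dehn twists $\tau_{\del W_1}$ and $\tau_{\del W_2}$ become literally identified and the two factorizations can be equated in $\pi_0(\Symp_c(W,\omega))$. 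Your proposal correctly identifies the role of the symplectic isotopy problem, but the missing idea is that one should compare two Lefschetz fibrations with \emph{four-dimensional fibers} (coming from two different ambient $3$-folds), not two surface-fibered pencils on the same four-manifold.
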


We will construct the Weinstein domain $(W, \omega)$ and Lagrangian spheres $L_{i,j}$ in the theorem, employing fibration-like structures. 
Here is a sketch of the proof of Theorem \ref{thm: relation}:  
It is known that there are distinct two complex $3$-folds $X_1$ and $X_2$ containing del Pezzo surfaces of degree $6$ as ample divisors. 
Note that such a del Pezzo surface is diffeomorphic to $\CP^2 \# 3 \overline{\CP}^2$. 
For $i=1,2$, consider a Lefschetz pencil $f_i \colon X_i \dashrightarrow \CP^1$ defined by a linear system containing the above ample divisor. 
This gives a Lefschetz fibration $p_i \colon X_i \setminus \nu(f_i^{-1}(\infty)) \rightarrow D^2$, where $\nu(f_i^{-1}(\infty))$ is a tubular neighborhood of the regular fiber $f_i^{-1}(\infty)$ in $X_i$. 
Let $(W_i, \omega_i)$ denote a regular fiber of $p_i$ ($i=1,2$), which is a Weinstein domain. 
According to the aforementioned results \cite{Aur} and \cite{Go}, the global monodromy of $p_i$ is a fibered Dehn twist $\tau_{\del W_i}$ along $\del W_i$ that factors into the product of Dehn twists along Lagrangian spheres $L_{i,j}$: 
$$
	[\tau_{\del W_1}]=[\tau_{L_{1,1}} \circ \cdots \circ \tau_{L_{1,4}}] \quad \textrm{and} \quad [\tau_{\del W_2}]=[\tau_{L_{2,1}} \circ \cdots \circ \tau_{L_{2,6}}].
$$
The key step of this construction is to show that $(W_1,\omega_1)$ and $(W_2,\omega_2)$ are symplectomorphic; this is a consequence of the symplectic isotopy problem (see Problem \ref{prob: isotopy}). 
We solve it for anti-canonical divisors on del Pezzo surfaces via holomorphic curve techniques.
The symplectomorphism between $(W_1,\omega_1)$ and $(W_2,\omega_2)$ allows us to identify $\tau_{\del W_1}$ with $\tau_{\del W_2}$. 
Setting $(W,\omega) \coloneqq (W_1,\omega_1)$, we finally obtain the desired relation in $\pi_0(\Symp_c(W,\omega))$.  
Remark that some Lagrangian spheres in Theorem~\ref{thm: relation} might be the same. 
We also find that giving a symplectic mapping class relation is somewhat more difficult than giving a smooth one since our construction relies on the symplectic isotopy problem.

We would like to point out a result for the symplectic mapping class group of $\CP^2 \# 3\overline{\CP}^2$. 
Capping off the boundary $\del W$ of the Weinstein domain $(W, \omega)$ in Theorem~\ref{thm: relation} by a disc bundle over a $2$-torus, we obtain $\CP^2 \# 3\overline{\CP}^2$ with a certain symplectic structure $\omega'$ and a relation between two products of Dehn twists on its symplectic mapping class group canonically induced by Theorem \ref{thm: relation}. 
Li, Li and Wu \cite{LLW} showed that the symplectic mapping class group $\pi_0(\Symp_c(\CP^2 \# 3\overline{\CP}^2, \omega'))$ is a finite group generated by  Dehn twists. 
Moreover, it also follows from their argument that the square of any Dehn twist is symplectically isotopic to the identity. 
Hence, one could easily find a relation between two products of Dehn twists in $\pi_0(\Symp_c(\CP^2 \# 3\overline{\CP}^2, \omega'))$.
However, such a relation does not imply Theorem \ref{thm: relation} directly yet unless we understand the difference between $\pi_0(\Symp_c(\CP^2 \# 3\overline{\CP}^2, \omega'))$ and $\pi_0(\Symp_c(W, \omega))$.  

With the help of Lefschetz fibrations and open books, the author \cite{ObaFillings} constructed $(4n-1)$-dimensional contact manifolds admitting infinitely many Weinstein fillings, or equivalently Stein fillings up to homotopy. 
After the first version of this paper was posted on arXiv, intriguingly, Zhou \cite{Zhou_fillings} provided contact manifolds with infinitely many Weinstein fillings \textit{in every dimension}, up to homotopy, based on Lefschetz fibrations and spinal open books, a variant of open books. 
Nonetheless, unlike $3$-dimensional results such as those in \cite{BVH} and \cite{DKP}, it is still difficult to control the Euler characteristics of Weinstein fillings of contact manifolds of dimension $\geq 5$. 
However, by using an $h$-principle argument, Lazarev \cite{Laz} has constructed contact manifolds in any dimension with arbitrarily many finite Weinstein fillings, each having distinct Euler characteristics.
As an application of Theorem \ref{thm: relation}, we will partially recover this result by means of fibration-like structures (see Proposition \ref{prop: fillings}).

\subsection*{Organization of this paper}
Section \ref{section: Dehn twists and fibered twists} deals with Dehn twists, fibered Dehn twists and their relation. 
We begin by giving basic definitions related to symplectic manifolds in Section \ref{section: basics} and then review Dehn twists in Section \ref{section: Dehn twists} and Lefschetz fibrations in Section \ref{section: LFs}. 
After this, we present a way to obtain a Lefschetz fibration from a given Lefschetz pencil in Section \ref{section: LP to LF}. 
Applying this, Section \ref{section: boundary-interior} explains a symplectic mapping class relation between a fibered Dehn twist and a product of Dehn twists. 
Section \ref{section: isotopy} is devoted to the symplectic isotopy problem for del Pezzo surfaces. 
Collecting results on holomorphic curves in Section \ref{section: holomorphic curves}, we give a solution to the problem in Section \ref{section: solution}. 
Section \ref{section: MCG} begins with a review of how del Pezzo surfaces appear as ample divisors on complex $3$-folds in Section \ref{section: del Pezzo} and then proves Theorem \ref{thm: relation} and its application to Weinstein fillings of contact $5$-manifolds. 
Finally, we conclude this paper by showing a relation between a fibered Dehn twist and a product of Dehn twists, restricting to the complex projective case in Appendix \ref{appendix: boundary-interior}.

\subsection*{Conventions and Notations}
Let $p \colon E \rightarrow X$ be a vector bundle. 
We often call the total space $E$ the \textit{vector bundle}. 
Also the \textit{zero-section} of $p$ often means the image of the zero-section and denote it by $X$ with slight abuse of notation.
For a submanifold $Y \subset M$, we denote the pull-back of a differential form $\omega$ on $X$ to $Y$ under the inclusion by $\omega|_{Y}$. 

We often consider complex manifolds, especially K\"{a}hler manifolds, and their complex submanifolds in this paper. An $n$-dimensional manifolds simply means a manifold of \textit{real} dimension $n$. When indicating the complex dimension of a complex manifold, it is said to be a complex $n$-dimensional manifold or complex manifold of complex dimension $n$. We also denote real and complex dimension by $\dim_{\R}$ and $\dim_{\C}$, respectively. 
A \textit{complex surface} (resp. \textit{curve}) means a complex manifold of complex dimension two (resp. one). In addition, real/complex codimensions follow the same manner as real/complex dimensions. 

\subsection*{Acknowledgements} 
The author would like to thank Akihiro Kanemitsu, Kaoru Ono and Toru Yoshiyasu for helpful discussions. 
He was supported by Japan Society for the Promotion of Science KAKENHI Grant Numbers 20K22306, 22K13913 and 24H00182.


\section{Dehn twists and fibered Dehn twists}\label{section: Dehn twists and fibered twists}

\subsection{Symplectic preliminaries}\label{section: basics}

We first recall some basics definitions related to symplectic manifolds. 
Let $(W, \omega)$ be a compact symplectic manifold with boundary $\del W$. 
The symplectic manifold $(W,\omega)$ is called a \textit{convex symplectic domain} if there exists a vector field $X$ defined in a collar neighborhood of $\del W$, called a \textit{Liouville vector field}, such that $\mathcal{L}_X \omega=\omega$ and $X$ points outward along the boundary $\del W$. 
If $X$ is defined on the whole $W$, then $(W,\omega)$ is called a \textit{Liouville domain}. 
Note that for the Liouville vector field $X$ on a convex symplectic domain, the $1$-form $i_X \omega$ satisfies $\omega=d i_X \omega$ and gives a contact form on $\del W$ (see \cite[Lemma/Definition~1.4.5]{Gei} for the proof). 
The \textit{completion} of a convex symplectic domain $(W, \omega)$ with Liouville vector field $X$ is the open symplectic manifold $(\widehat{W}, \widehat{\omega})$ defined by 
$$
	(\widehat{W}, \widehat{\omega}) = (W, \omega) \cup ([0,\infty) \times \del W, d(e^t((i_X\omega)|_{\del W})) ),
$$
where $t$ is the coordinate on $[0,\infty)$, and $W$ and $[0,\infty) \times \del W$ are glued together by identifying $\del W$ with $\{0\} \times \del W$.

\begin{definition}\label{def: Weinstein}
A \textit{Weinstein domain} is a Liouville domain $(W,\omega)$ such that the Liouville vector field $X$ is gradient-like for a Morse function $f \colon W \rightarrow \R$ with no critical points on $\del W$ and $f^{-1}(\max f)=\del W$, where $\max f$ denotes the maximum of the function $f$. 
\end{definition}

The boundary $\del W$ of a Weinstein domain $(W, \omega)$ of $\dim_{\R}W=2n$ ($n \geq 1$) carries a contact structure defined by $(i_X\omega)|_{\del W}$. 
Here a \textit{contact structure} on an oriented manifold $M$ of $\dim_{\R}M=2n-1$ is a hyperplane distribution written as the kernel of a $1$-form $\alpha$ on $M$ satisfying that $\alpha \wedge (d\alpha)^{n-1}$ is a positive volume form on $M$. 
Note that $\del W$ is canonically oriented as $W$ is oriented by $\omega^n$. 
A \textit{Weinstein filling} of a contact manifold $(M,\xi)$ is a Weinstein domain $(W,\omega)$ satisfying $\del W=M$ as oriented manifold and $\ker((i_X\omega)|_{\del W})=\xi$. 

Let $h \colon V \rightarrow \R$ be a smooth function on a symplectic manifold $(W,\omega)$. 
We define the \textit{Hamiltonian vector field} associated to $h$ as a vector field $X_h$ such that $dh=-\iota_{X_{h}}\omega$. 

We will deal with a special type of complex manifolds, called K\"{a}hler manifolds. 
Let $W$ be a complex manifold with the integrable almost complex structure $J$ induced by the complex structure.
A symplectic form $\omega$ on $W$ is said to be \textit{K\"ahler} if $J$ is compatible with $\omega$, i.e., $\omega(v,Jv)>0$ for any non-zero vector $v \in TW$ and $\omega_x(Jv_1, Jv_2) =\omega_x(v_1,v_2)$ for all $x \in W$ and all $v_1, v_2 \in T_{x}W$. 
Then, the triple $(W,J,\omega)$ is called a \textit{K\"{a}hler manifold}. 

To conclude this subsection, we define the notion of holomorphicity of a map and its related notion, which will be used in the definition of a Lefschetz fibration for instance. 
Let $(E,J)$ and $(S,j)$ be almost complex manifolds. 
A differentiable map $f \colon E \rightarrow S$ is said to be \textit{$(J,j)$-holomorphic} if $df \circ J=j \circ df$. 
Suppose that $J$ is integrable, $S=\C$ and $x \in E$ is a critical point of a $(J,j)$-holomorphic $C^2$-map $f$. 
The \textit{complex Hessian} of $f$ at $x$ is represented by $(\del^2 (\psi \circ f \circ \varphi^{-1})/\del z_i \del z_j)_{1 \leq i,j \leq n}$ for complex coordinate maps $\varphi=(z_1, \ldots, z_n)$ around $x \in E$ and $\psi$ around $f(x) \in \C$. 
Note that this matrix itself is dependent on the choice of coordinates; however, its nondegeneracy is independent of the choice of them. 

\subsection{Dehn twists}\label{section: Dehn twists}

Here we briefly review the definition of Dehn twist. 
The reader is referred to \cite[(16c)]{SeiBook} and \cite[(1a)]{Sei} for more details. 

First let us recall a model Dehn twist on the cotangent bundle $T^{*}S^{n}$ to the sphere $S^{n}$. 
We regard $T^{*}S^{n}$ as the space defined by 
$$
	\{(q, p) \in \R^{n+1} \times \R^{n+1} \mid q \cdot q =1, q \cdot p=0\}, 
$$
where $\cdot$ denotes the standard inner product of the Euclidean space $\R^{n+1}$. 
Define $\lambda_{\mathrm{can}} \coloneqq pdq|_{T^{*}S^n}$.
Consider the normalized geodesic flow $\sigma_{t}$ with $t \in \R$ on $T^{*}S^{n} \setminus S^n$ given by  
$$
	\sigma_{t}(q,p)=(\cos(t)q+|p|^{-1}\sin(t)p, -|p|\sin(t)q+\cos(t)p). 
$$
A \textit{model Dehn twist} is a diffeomorphism of $T^{*}S^{n}$ defined by 
$$
	\tau(q,p) \coloneqq 
	\begin{cases}
	\sigma_{f(|p|)}(q,p) & \textrm{if\ } p \neq 0,\\
	(-q,0) & \textrm{if\ } p=0, 
	\end{cases}
$$
where $f \colon [0,\infty) \rightarrow \R$ is a smooth function such that $f(r)$ equals $\pi$ near $r=0$ and $0$ for $r \gg 0$, and $f'(r) \leq 0$ for any $r \in [0,\infty)$. 
The diffeomorphism $\tau$ is an exact symplectomorphism of $(T^{*}S^{n}, d\lambda_{\mathrm{can}})$ with compact support. 
Choosing a function $f$ whose support is sufficiently small, we may assume that the support of $\tau$ lies in a small neighborhood of the zero-section of $T^{*}S^n$. 

Now suppose that we have a Lagrangian sphere $L$ in a $2n$-dimensional symplectic manifold $(W,\omega)$, i.e., a submanifold of $(W,\omega)$ satisfying that $L$ is diffeomorphic to $S^n$ and $\omega|_L \equiv 0$. 
Equip $L$ with a diffeomorphism $v \colon S^{n} \rightarrow L$, called a faming of $L$. 
With the faming $v$, the Weinstein Lagrangian tubular neighborhood theorem gives a symplectic embedding $\iota \colon (D_{\epsilon}^{*}S^{n}, d\lambda_{\mathrm{can}}) \rightarrow (W, \omega)$ satisfying that $\iota|_{S^{n}}=v$, where $D_{\epsilon}^{*}S^{n}$ is the disc cotangent bundle to $S^n$ of radius $\epsilon$. 
Define a \textit{(right-handed) Dehn twist} $\tau_{L}$ along $L$ by 
$$
	\tau_{L}(x) = \begin{cases}
	(\iota \circ \tau \circ \iota^{-1})(x) & \textrm{if \ } x \in \mathrm{Im}(\iota), \\
	x & \textrm{otherwise},
	\end{cases}
$$
where we choose a model twist $\tau$ to be supported in $D_{\epsilon}^{*}S^{n}$. 
By definition, $\tau_L$ is a compactly supported symplectic automorphism of $(W, \omega)$. 
In fact, it is independent of the choice not only of $\iota$ and but also of $v$ when $n \leq 2$ up to Hamiltonian isotopy \cite[Remark 16.1]{SeiBook}.  

\subsection{Lefschetz fibrations}\label{section: LFs}


Let $E$ be an open manifold of dimension $2n$ ($n \geq 1$) and $\Omega$ a closed $2$-form on $E$. 

\begin{definition}\label{def: Lefschetz}
A smooth map $\pi \colon (E, \Omega) \rightarrow \C$ is called a \textit{Lefschetz fibration} if 
the set of critical points of $\pi$, denoted by $\Crit(\pi)$, is a finite subset in $E$; 
there exists an $\Omega$-compatible almost complex structure $J$ defined in a neighborhood $\mathcal{U}$ of 
$\Crit(\pi)$ in $E$; there is a positively oriented complex structure $j$ defined in a neighborhood $\mathcal{V}$ of $\Critv(\pi) \coloneqq \pi(\Crit(\pi))$ in $\C$, which satisfy the following conditions: 
\begin{enumerate}
\item\label{cond: holomorphic} The map $\pi|_{\mathcal{U}}: \mathcal{U} \rightarrow \mathcal{V}$ is $(J,j)$-holomorphic. 
\item\label{cond: symplectic fibers} The $2$-form $\Omega$ is nondegenerate on $\ker_{x}(D\pi)$ for any point $x \in E$. 
\item For all $z \in \C \setminus \Critv(\pi)$, the fibers $\pi^{-1}(z)$ are symplectomorphic to the completion $(\widehat{F}, \widehat{\omega}_F)$ of a convex symplectic domain $(F, \omega_F)$ of $\dim_{\R}F=2n-2$, that is, 
$$
	(\widehat{F}, \widehat{\omega}_F) = (F, \omega_F) \cup ([0,\infty) \times \del F, d(e^t((i_X\omega_F)|_{\del F}))), 
$$
where $X$ is a Liouville vector field defined near the boundary $\del F$. 
\item\label{cond: near critical points} Near $\Crit(\pi)$, the almost complex structure $J$ is integrable and the $2$-form $\Omega$ is K\"ahler. 
\item\label{cond: Hessian} The complex Hessian $D^2_{x} \pi$ of $\pi$ is nondegenerate at any critical point $x \in \Crit(\pi)$. 
\end{enumerate}
\end{definition}

A fiber of a Lefschetz fibration $\pi \colon (E, \Omega) \rightarrow \C$ is said to be \textit{singular} if it is over a critical value of $\pi$; otherwise it is said to be \textit{regular}. 

\begin{remark}\label{rem: horizontal}
Let  $\pi \colon (E, \Omega) \rightarrow \C$ be a Lefschetz fibration whose regular fibers are symplectomorphic to the completion of a convex symplectic domain $(F, \omega_{F})$ with Liouville vector field $X$ near the boundary $\del F$ as in Definition \ref{def: Lefschetz}. 
We often further require the fibration $\pi$ to satisfy the following condition: 
\begin{itemize}
\item \textit{Horizontal triviality}. There is a subset $E'$ of $E$ such that the restriction $\pi|_{\red{E'}}$ is isomorphic to the trivial fibration 
$$
	\C \times [0, \infty) \times \del F \rightarrow \C,
$$ and $\Omega$ is identified with $\omega_{\C} + d(e^t((i_{X}\omega_{F})|_{\del F}))$ by the isomorphism, where $\omega_{\C}$ is the pull-back of a symplectic form on $\C$.
\end{itemize}
A Lefschetz fibration satisfying this condition is referred to as a \textit{Lefschetz fibration with horizontal triviality}. 
\end{remark}

\begin{remark}\label{rem: compact fibers}
A Lefschetz fibration $\pi \colon (E,\Omega) \rightarrow \C$ with horizontal triviality provides a Lefschetz fibration over $\C$ with compact fibers. 
Indeed, for the subset $E'$ of $E$ given in the above remark, take a subset $E'' \subset E'$ on which $\pi$ is identified with 
$$
	\C \times [\epsilon, \infty) \times \del F \rightarrow \C,
$$
where $\epsilon>0$ is a constant. 
Removing $E''$ from $E$, we have the desired Lefschetz fibration $\pi|_{\overline{E \setminus E''}} \colon (\overline{E \setminus E''}, \Omega|_{\overline{E \setminus E''}}) \rightarrow \C$.
By construction, it is straightforward that on a collar neighborhood of the boundary $\del (\overline{E \setminus E''})$, the map $\pi|_{\overline{E \setminus E''}}$ is regarded as the trivial fibration $\C \times [0, \epsilon] \times \del F \rightarrow \C$. 
We say that the Lefschetz fibration $\pi|_{\overline{E \setminus E''}} \colon (\overline{E \setminus E''}, \Omega|_{\overline{E \setminus E''}}) \rightarrow \C$ meets the \textit{horizontal triviality} as in the case of Lefschetz fibrations with non-compact fibers. 
\end{remark}

\begin{figure}[t]
	\centering
	\begin{overpic}[width=150pt,clip]{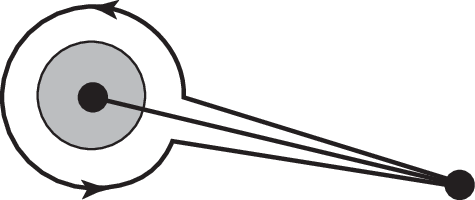}
	 \linethickness{1pt}
	\put(140,14){$z_0$}  
 	\put(26,40){$z_{i}$}
	\put(-10,20) {\vector(2,1){31}}
  	\put(-37,15){$D(z_i)$}
	\put(55,55){$\ell_i$}
	\put(44,19){$\gamma_i$}
	\end{overpic}
	\caption{Vanishing path $\gamma_i$ and loop $\ell_i$. }
	\label{fig: loop}
\end{figure}

Set $\Critv(\pi)=\{z_1, \ldots, z_k\}$ for a Lefschetz fibration $\pi \colon (E, \Omega) \rightarrow \C$ with horizontal triviality. 
Assume that $\pi|_{\Crit(\pi)} \colon \Crit(\pi) \rightarrow \Critv(\pi)$ is injective. 
According to the condition (\ref{cond: symplectic fibers}) in Definition \ref{def: Lefschetz}, the symplectic connection is defined away from the singular fibers and determines the parallel transport map along a path in $\C \setminus \Critv(\pi)$. 
Fix a basepoint $z_0 \in \C \setminus \Critv(\pi)$ with $|z|$ sufficiently large and set $F_{0}=\pi^{-1}(z_0)$ and $\omega_{F_{0}}=\Omega|_{F_{0}}$. 
A \textit{vanishing path} for $z_i \in \Critv(\pi)$ is an embedded path  $\gamma_i \colon [0,1] \rightarrow \C$ such that $\gamma_i(0)=z_0$, $\gamma_i(1)=z_i$ and $\gamma_i^{-1}(\Critv(\pi))=\{1\}$. 
One can associate to a vanishing path $\gamma_i$ for $z_i$ a framed Lagrangian sphere $V(\gamma_i)$ in $(F_{0}, \omega_{F_{0}})$, called the \textit{vanishing cycle} of $\gamma_i$. 
Take a small disc $D(z_i)$ centered at $z_i \in \Critv(\pi)$ and orient $\del D(z_i)$ counterclockwise. 
Let $\ell_i \colon [0,1] \rightarrow \C \setminus \Critv(\pi)$ be a loop obtained by welding a vanishing path $\gamma_i$ and $\del D(z_i)$ together (see Figure \ref{fig: loop}). 
An ordered collection $(\gamma_1, \ldots, \gamma_k)$ of vanishing paths is called a \textit{distinguished basis} of vanishing paths if it satisfies the following conditions: 
\begin{itemize}
	\item Any two distinct paths $\gamma_i$ and $\gamma_j$ intersect only at the basepoint $z_0$. 
	\item The concatenation $\ell_1 \cdots \ell_k$ of loops $\ell_i$ obtained by $\gamma_i$ in the above manner is homotopic to the circle $\ell_0(t)=z_0e^{2\pi i t}$ relatively to the basepoint $z_0$.
\end{itemize}
Let $\ell$ be a loop based at $z_0$ homotopic to $\ell_0$. 
As is known, for a chosen distinguished basis $(\gamma_1, \cdots, \gamma_k)$, the monodromy along $\ell$ is given by 
$$
	\tau_{V(\gamma_k)} \circ \cdots \circ \tau_{V(\gamma_1)} \in \Symp_{c}(F_{0}, \omega_{F_0})
$$ 
up to symplectic isotopy. 
We refer to this monodromy as the \textit{global monodromy} of $\pi$. 
Note that the global monodromy itself does not depend on the choice of distinguished basis $(\gamma_1, \ldots, \gamma_k)$ up to Hamiltonian isotopy, 
whereas its factorization into Dehn twists does depend on its choice.

\subsection{From Lefschetz pencils to Lefschetz fibrations}\label{section: LP to LF}

In this subsection, we will explain how to obtain a Lefschetz fibration from a given Lefschetz pencil. 

\subsubsection{Symplectic forms on Whitney sums}\label{section: Whitney sum}

To begin with, we describe a symplectic form on the Whitney sums of two line bundles, which will serve as a symplectic form on a neighborhood of the base locus of a Lefschetz pencil. 
The following argument is inspired by the proof of \cite[Theorem 2.3]{Go}. 

Let $(B, \omega_{B})$ be a closed integral symplectic manifold of arbitrary dimension, i.e., $[\omega_B] \in H^{2}(B; \R)$ has an integral lift and $p \colon L \rightarrow B$ a Hermitian complex line bundle with $c_1(L)=[\omega_{B}]$. 
Consider the Whitney sum $ \pi \colon  L^{\oplus 2}=L \oplus L  \rightarrow B$ and projectivization $f \colon L^{\oplus 2} \setminus B \rightarrow \CP^{1}$, that is, the map locally written as $f(b, x_1, x_2)=[x_1:x_2]$ for $(b, x_1,x_2) \in U \times \C \times \C$, where $U$ is an open set of $B$. 
Let $P_{\delta} \subset L^{\oplus 2}$ denote the sphere bundle of radius $\delta$ for the Hermitian metric associated to $L ^{\oplus 2}$. 
Restricting the map $(\pi, f) \colon L^{\oplus 2} \setminus B \rightarrow B \times \CP^{1}$ to $P_{\delta}$ gives a $U(1)$-bundle structure to $P_{\delta}$. 
One can identify $L^{\oplus 2} \setminus B$ with the complement $(P_{\delta} \times_{U(1)} \C) \setminus (B \times \CP^1)$ of the zero-section in the associated bundle by 
\begin{align}\label{eqn: identification}
	L^{\oplus 2} \setminus B \rightarrow (P_{\delta} \times_{U(1)} \C) \setminus (B \times \CP^1), \quad z=(z_0,z_1) \mapsto \left[{\delta z}/{|z|}, |z|\right], \end{align}
where $|z|$ denotes the norm of $z$ for the Hermitian metric. 
Take a connection $1$-form $\alpha$ on $P_{\delta}$ such that $d\alpha=-2\pi(\pi^{*}\omega_{B}-f^{*}\omega_{\textrm{FS}})$, where $\omega_{\textrm{FS}}$ is the Fubini--Study form on $\CP^1$. 
Now we define the $2$-form $\widetilde{\Omega}$ on $P_{\delta} \times \C$ by 
\begin{align*}
	\widetilde{\Omega}  \coloneqq \pi^{*}\omega_{B}-\frac{1}{2\pi}d(r^2 d\theta)+\frac{1}{2\pi}d(r^2\alpha),
\end{align*}
where $(r,\theta)$ are the polar coordinates on $\C$. 
Since this is $U(1)$-invariant and horizontal, it descends to a $2$-form not only on $P_{\delta} \times_{U(1)} \C$ but also on $L^{\oplus 2}$ under the identification (\ref{eqn: identification}).
We claim that the resulting $2$-form $\Omega$ on $L^{\oplus 2}$ is a symplectic form in a small neighborhood of the zero-section. 
As $\Omega$ is closed, it suffices to show the nondegeneracy of $\Omega$.  
In view of the choice of $\alpha$, we find that $\widetilde{\Omega}$ can be expressed as 
\begin{align}
	\widetilde{\Omega} & = \frac{1}{2\pi}d(r^2-1)( \alpha-d\theta )+f^{*}\omega_{\mathrm{FS}} \label{eqn: Omega} \\
	 &= \frac{1}{2\pi}dr^2 \wedge ( \alpha-d\theta )+(1-r^2)\pi^{*}\omega_{B} +r^2f^{*}\omega_{\mathrm{FS}}. \label{eqn: Omega 2}
\end{align}
Observe that $\Omega$ coincides with the standard symplectic form on the fibers of $\pi$ up to a constant factor. 
To see a horizontal direction, define the distribution $H$ of $TL^{\oplus 2}$ to be $TB$ on the zero-section and the $\alpha$-horizontal lifts of $T(B \times \{ \mathrm{pt} \}) \subset T(B \times \CP^{1})$ to $P_{\delta}$ for each $\delta>0$. 
Since $H$ is tangent to $P_{\delta}$ and the fibers of $f$, $H$ is the $\Omega$-complement of $\ker(D\pi)$. 
By (\ref{eqn: Omega 2}), $\Omega|_{H}=(1-r^2)\pi^{*}\omega_{B}$, and it extends smoothly to $B$. 
This concludes that $\Omega$ extends smoothly to the whole of $L^{\oplus 2}$ with $\Omega|_{B}=\pi^{*}\omega_{B}$. 
Hence, $\Omega$ is a nondegenerate $2$-form on a small neighborhood of the zero-section. 

Let $L^{\oplus 2}(r_0)$ denote the associated disc bundle to $L^{\oplus 2}$ of radius $r_0>0$ and set 
$$
N_{0}(r_0) \coloneqq L^{\oplus 2}(r_0) \setminus \{(z_0,z_1) \in L^{\oplus 2}(r_0) \mid z_0=0\}, 
$$
$$
N_{0}(r_0,r_1) \coloneqq N_{0}(r_{1}) \setminus \Int N_{0}(r_0) 
$$
for $0<r_0<r_1<1$.
We will see below that there is a Liouville vector field on $N_{0}(r_0,r_1)$ pointing outward along $\del N_{0}(r_0)$. 
First with (\ref{eqn: identification}), we regard $N_{0}(r_0, r_1)$ as $P_{\delta,0} \times_{U(1)} \mathbb{A}(r_0,r_1)$, where $P_{\delta,0}=P_{\delta} \setminus \{(z_0,z_1) \in  P_{\delta} \mid z_0=0 \}$ and $\mathbb{A}(r_0, r_1)=\{ z \in \C \mid r_{0} \leq |z| \leq r_1\}$.
Consider the function $h \colon P_{\delta,0} \times_{U(1)} \mathbb{A}(r_0,r_1) \rightarrow \R$ given by 
$$
h([z_0, z_1,re^{i\theta}])=r^2.
$$ 
Its Hamiltonian vector field $X_h$ on $P_{\delta,0} \times_{U(1)} \mathbb{A}(r_0,r_1)$ is $-2\pi\del_{\theta}$. 
Since the image of $P_{\delta,0} \times_{U(1)} \mathbb{A}(r_0,r_1)$ by $f$ is $\C \subset \C \cup \{\infty\}= \CP^1$, the pull-back $f^{*}\omega_{\mathrm{FS}}$ restricts to $f^{*}\left(-\frac{1}{4\pi}dd^{\C} \log (|z|^2+1) \right)$ on $P_{\delta,0} \times_{U(1)} \mathbb{A}(r_0,r_1)$, where $z$ is the standard complex coordinate of $\C$ and $d^{\C} \coloneqq d \circ J_{\mathrm{st}}$ for the standard complex structure $J_{\mathrm{st}}$ on $\C$.
By (\ref{eqn: Omega}), $\Omega$ is exact on $P_{\delta,0} \times_{U(1)} \mathbb{A}(r_0,r_1)$. 
Hence, one can take the Liouville vector field $V$ on this region that is $\Omega$-dual to 
$$
	\lambda \coloneqq (r^2-1)(\alpha-d\theta)/2\pi-\frac{1}{4\pi}f^{*}d^{\C} \log (|z|^2+1).
$$ 
Now we have 
\begin{align*}
	dh(V) =  -\Omega(X_h, V) = \lambda(X_h) = r^2-1<0, 
\end{align*}
where $X_h$ is the Hamiltonian vector field associated to $h$.
This shows that $V$ points inward along each level set of $h$. 
In other words, $V$ points outward along $\del N_{0}(r_0)$ when it is considered as a part of the boundary of $N_{0}(r_0, r_1)$. 

Moreover, $\Omega$ matches the exact symplectic form $d(r^2-1)(\alpha-d\theta)/2\pi$ on each fiber of $f$, for which a Liouville vector field is given by 
\begin{align}\label{eqn: Liouville}
	\frac{r^2-1}{2r}\del_{r}. 
\end{align}
This makes the fibers of $f|_{N_{0}(r_0, r_1)}$ convex along the boundary component  lying in $\del N(r_0)$.

We summarize the above discussion into the following: 

\begin{proposition}
Let $(B, \omega_B)$ be a closed integral symplectic manifold of arbitrary dimension and $L \rightarrow B$ a Hermitian line bundle with $c_1(L)=[\omega_B]$. 
Also let $f \colon L^{\oplus 2} \setminus B \rightarrow \CP^1$ denote projectivization on each fiber. 
Then there exists a symplectic form $\Omega$ on a closed tubular neighborhood $\mathcal{U}$ of the zero-section $B$ of $L^{\oplus 2}$ such that the following holds: 
\begin{itemize}
\item On the zero-section, $\Omega$ agrees with $\omega_{B}$. 
\item For any disc bundle $L^{\oplus 2}(r)$ associated to $L^{\oplus 2}$ with $L^{\oplus 2}(r) \subset \mathcal{U}$, a Liouville vector field for $\Omega$ defined on $\mathcal{U}_{0} \coloneqq \mathcal{U} \setminus( \Int L^{\oplus 2}(r) \cup f^{-1}([0:1]))$ points outward along its boundary component lying in $\del L^{\oplus 2}(r)$. 
\item A Liouville vector field on each fiber of $f|_{\mathcal{U}_{0}}$ for the restriction of $\Omega$ to the fiber points outward along its boundary component lying in $\del L^{\oplus 2}(r)$.
\end{itemize}
\end{proposition}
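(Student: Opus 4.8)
The proposition packages the explicit construction carried out in the preceding discussion into a single statement, so the plan is essentially one of assembly; the only point that needs genuine care is checking that the form $\Omega$ extends smoothly and nondegenerately across the zero-section $B$. Throughout I would take $\mathcal{U}$ and $\mathcal{U}'$ to be the disc subbundles $L^{\oplus 2}(r_1)$ and $L^{\oplus 2}(r_0)$ for radii $0 < r_0 < r_1 < 1$ to be fixed below. Since the locus $f^{-1}([0:1])$ is the set $\{z_0 = 0\}$, this makes $\mathcal{U}_0 = \mathcal{U} \setminus (\Int\mathcal{U}' \cup f^{-1}([0:1]))$ coincide with the region $N_0(r_0,r_1) = P_{\delta,0} \times_{U(1)} \mathbb{A}(r_0,r_1)$ treated above, so the two bullets about Liouville vector fields become the computations already recorded there.

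First I would produce $\Omega$ exactly as in the discussion: choose a connection $1$-form $\alpha$ on $P_\delta$ with $d\alpha = -2\pi(\pi^{*}\omega_B - f^{*}\omega_{\mathrm{FS}})$, set $\widetilde\Omega = \pi^{*}\omega_B - \frac{1}{2\pi}d(r^2\, d\theta) + \frac{1}{2\pi}d(r^2\alpha)$ on $P_\delta \times \C$, observe that $\widetilde\Omega$ is $U(1)$-invariant and horizontal, and let it descend through the identification (\ref{eqn: identification}) to a closed $2$-form $\Omega$ on $L^{\oplus 2} \setminus B$. For nondegeneracy --- the heart of the argument --- I would decompose $TL^{\oplus 2}$ over $\mathcal{U}$ into the vertical subbundle $\ker(D\pi)$ and the horizontal distribution $H$ (the $\alpha$-horizontal lifts of $T(B \times \{\mathrm{pt}\})$, completed by $TB$ over the zero-section): on $\ker(D\pi)$ the form $\Omega$ is a positive multiple of the standard fiber area form, on $H$ formula (\ref{eqn: Omega 2}) gives $\Omega|_H = (1-r^2)\pi^{*}\omega_B$, and $H$ is the $\Omega$-orthogonal complement of $\ker(D\pi)$ by construction. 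The delicate step I expect to require real attention is that $\Omega$ and $f$ are a priori only defined off $B$ (respectively off $\{z_0 = 0\}$), so one must verify that these local pictures patch to a smooth symplectic form on all of $L^{\oplus 2}(r_1)$; this works precisely because $\Omega|_H = (1-r^2)\pi^{*}\omega_B$ extends smoothly to $r = 0$ with value $\pi^{*}\omega_B$, which simultaneously forces $r_1 < 1$ and yields $\Omega|_B = \pi^{*}\omega_B$. This establishes the first bullet with $\mathcal{U} = L^{\oplus 2}(r_1)$.

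It then remains to read off the two Liouville vector fields from the explicit primitives. On $\mathcal{U}_0 = N_0(r_0,r_1)$, formula (\ref{eqn: Omega}) shows $\Omega$ is exact with primitive $\lambda = \frac{1}{2\pi}(r^2-1)(\alpha - d\theta) - f^{*}d^{\C}\log(|w|^2+1)$; taking $V$ to be its $\Omega$-dual Liouville field and using that the Hamiltonian vector field of $h = r^2$ is $X_h = -2\pi\del_\theta$, the identity $dh(V) = -\Omega(X_h, V) = \lambda(X_h) = r^2 - 1 < 0$ shows that $V$ decreases $h$ and hence points outward along the inner boundary component $\del\mathcal{U}' = \{h = r_0^2\}$ of $\mathcal{U}_0$, giving the second bullet. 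For the third, on each fiber of $f|_{\mathcal{U}_0}$ the form $\Omega$ restricts to $d\bigl(\frac{1}{2\pi}(r^2-1)(\alpha - d\theta)\bigr)$, whose radial Liouville field $\frac{r^2-1}{2r}\del_r$ from (\ref{eqn: Liouville}) has negative coefficient at $r = r_0 < 1$ and therefore also points outward along $\del\mathcal{U}'$. Assembling these three observations proves the proposition; as indicated, the smooth extension of $\Omega$ across the zero-section is the only place I anticipate nontrivial work, everything else being the explicit identities from the preceding discussion.
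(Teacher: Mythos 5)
Your proposal is correct and follows the same route as the paper: the proposition is explicitly stated as a summary of the preceding discussion, and you faithfully reproduce that discussion — the construction of $\widetilde\Omega$ and its descent, the decomposition into $\ker(D\pi)\oplus H$ with $\Omega|_H=(1-r^2)\pi^*\omega_B$ giving nondegeneracy and the smooth extension over $B$, the primitive $\lambda$ with $dh(V)=\lambda(X_h)=r^2-1<0$ for the global Liouville field, and the fiberwise Liouville field $\frac{r^2-1}{2r}\del_r$ from (\ref{eqn: Liouville}). The only quibble is the phrase ``standard fiber area form'' where the $\pi$-fibers are $4$-dimensional copies of $\C^2$, so ``standard symplectic form'' (as the paper says) would be more accurate.
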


\subsubsection{Lefschetz pencils and fibrations}
Let $(X, \omega)$ be a closed integral symplectic manifold of $\dim_{\R}X =2n \geq 4$ and $B \subset (X, \omega)$ a symplectic submanifold of $\dim_{\R}B=2n-4$.

\begin{definition}\label{def: LP}
A smooth map $f \colon X \setminus B \rightarrow \CP^{1}$, also denoted by $f \colon X \dashrightarrow \CP^1$, is called a \textit{Lefschetz pencil} on $(X, \omega)$ if it satisfies the following conditions: 
\begin{enumerate}
\item\label{condition: nbhd} There exists a tubular neighborhood $\nu_{X}(B)$ of $B$ in $X$ such that 
\begin{itemize}
\item $\nu_{X}(B)$ is symplectomorphic to a neighborhood of the zero-section of $N \oplus N \rightarrow B$, where $N \rightarrow B$ is a complex line bundle over $B$ with $c_1(N)=[\omega|_B]$ and the Whitney sum is equipped with a symplectic form $\omega_{N \oplus N}$ as in Section \ref{section: Whitney sum}; 
\item $f|_{\nu_{X}(B)\setminus B}$ is identified with projectivization $(N \oplus N) \setminus B \rightarrow \CP^{1}$ by the above symplectomorphism.
\end{itemize}
\item\label{condition: LF} There exist almost complex structures $J$ and $j$ defined in neighborhoods of $\Crit(f)$ and $\Critv(f)$, respectively, which meet the conditions (\ref{cond: holomorphic}), (\ref{cond: near critical points}) and (\ref{cond: Hessian}) in Definition \ref{def: Lefschetz}. 
\item\label{condition: LP symplectic} The symplectic form $\omega$ on $X$ is nondegenerate on the closure $\overline{F}_{z}$ of every fiber $F_{z} \coloneqq f^{-1}(z)$ in $X$ except for the critical points of $f$. 
\end{enumerate}
\end{definition}

The submanifold $B$ is called the \textit{base locus} of a Lefschetz pencil $f \colon X \setminus B \rightarrow \CP^1$. 

According to the first condition in Definition \ref{def: LP}, the closure $\overline{F}_{z}$ of every regular fiber $F_z$ is a (real) $(2n-2)$-dimensional embedded submanifold of $X$. 
We also find that the complex line bundle $N \rightarrow B$ is isomorphic to the normal bundle $N_{B/\overline{F}_z} \rightarrow B$ of $B$ in $\overline{F}_{z}$. 

We shall see below that a Lefschetz pencil $f \colon X \setminus B \rightarrow \CP^1$ on $(X, \omega)$ induces a Lefschetz fibration on the complement of a regular fiber in $X$ after perturbing the symplectic structure $\omega$.  

\begin{proposition}\label{prop: LP to LF}
Let $f \colon X \setminus B \rightarrow \CP^1$ be a Lefschetz pencil on a closed integral symplectic manifold $(X, \omega)$. 
Suppose that $\infty \in \C \cup \{\infty\}=\CP^1$ is a regular value of $f$. 
Then, for some (open) tubular neighborhood $\nu_{X}(B)$ of $B$, there exists a closed $2$-form $\omega'$ on $E \coloneqq X \setminus ( \overline{F}_{\infty}  \cup \nu_X(B))$ such that the following holds: 
\begin{enumerate}
\item $ f|_{E} \colon (E, \omega') \rightarrow \C$ is a Lefschetz fibration with horizontal triviality and compact fibers. 
\item For every $z \in \C$, $\omega|_{\pi^{-1}(z)}=\omega'|_{\pi^{-1}(z)}$. 
\item $\omega'$ coincides with $\omega$ outside a collar neighborhood of the boundary $\del E$. 
\end{enumerate}
\end{proposition}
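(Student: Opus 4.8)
The plan is to keep $\omega$ unchanged on the bulk of $E$ and to perturb it only inside a collar of the base locus $B$: identifying $\C$ with $\CP^1\setminus\{\infty\}$ so that $f|_E$ takes values in $\C$, every requirement in Definition~\ref{def: Lefschetz} and Remark~\ref{rem: horizontal} \emph{except} the horizontal triviality already holds for $\omega$ itself, so the perturbation only has to install a product structure near $B$.

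Concretely, using condition~(\ref{condition: nbhd}) of Definition~\ref{def: LP}, fix a tubular neighbourhood $\nu_X(B)$ together with a symplectomorphism onto a neighbourhood of the zero-section of $L^{\oplus 2}$ with $L=N_{B/\overline F_w}$, carrying $\omega$ to the model form $\Omega$ of the preceding proposition and $f$ to fibrewise projectivization (interchanging the two summands if necessary so that $\infty$ becomes the value $[0:1]$ excised in that proposition). Shrinking $\nu_X(B)$, I may assume the larger neighbourhood $\mathcal U\supset\nu_X(B)$ of that proposition still lies where this model holds, and I write $\mathcal U_0\coloneqq\mathcal U\setminus(\Int\nu_X(B)\cup f^{-1}(\infty))$ for the shell it produces, a collar of $\partial\nu_X(B)$ inside $E$. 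The set $\Crit(f)$ is finite and, since the closure of each fibre is a manifold near $B$, disjoint from $B$; it may also be taken disjoint from $\overline F_\infty$, so $\Crit(f)\subset E\setminus\mathcal U$. Setting $\omega'\coloneqq\omega$ near $\Crit(f)$, conditions (1), (4), (5) of Definition~\ref{def: Lefschetz} hold automatically; condition~(\ref{cond: symplectic fibers}) holds because each fibre $F_w$ is $\omega$-symplectic; and condition (3) holds because, by the preceding proposition, the Liouville structure that $\Omega$ induces on the fibres over $\C$ is convex along $\partial\nu_X(B)$, with $\partial F=S(N_{B/\overline F_w})$. Thus only the horizontal triviality of Remark~\ref{rem: horizontal} remains, and it concerns just the behaviour of $\Omega$ on $\mathcal U_0$.

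On $\mathcal U_0$ the formulas \eqref{eqn: Omega}--\eqref{eqn: Omega 2} give $\Omega=\tfrac1{2\pi}d\bigl[(r^2-1)(\alpha-d\theta)\bigr]+f^{*}\omega_{\mathrm{FS}}$, whose restriction to each fibre of $f$ is the exact symplectic form $d\lambda_F\coloneqq\tfrac1{2\pi}d[(r^2-1)(\alpha-d\theta)]$ (with Liouville vector field \eqref{eqn: Liouville}), while $f^{*}\omega_{\mathrm{FS}}$ encodes the symplectic connection. I would then define $\omega'$ on $\mathcal U_0$ by interpolating in the radial collar coordinate $r$, via a monotone cut-off $\chi(r)$ equal to $1$ near the outer boundary of $\mathcal U$ and to $0$ near $\partial\nu_X(B)$, so that $\omega'$ equals $\Omega$ near $\partial\mathcal U$ and, near $\partial\nu_X(B)$, becomes a genuine product $\omega_{\C}+d\lambda_F$ with respect to the natural trivialization $f^{-1}(\C)\cong\C\times(L\setminus B)$ (for a fixed symplectic form $\omega_{\C}$ on $\C$), without altering the fibrewise restriction of $\Omega$ anywhere. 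Concretely this replaces the mixed term $f^{*}\omega_{\mathrm{FS}}$ by $\chi(r)\,f^{*}\omega_{\mathrm{FS}}+(1-\chi(r))\,f^{*}\omega_{\C}$ and, if necessary, adjusts the $r$-dependence of the remaining base-direction terms by pullbacks from $\C$. Using the explicit formulas one checks that the result is closed (each summand is closed, $\chi$ depends on $r$ alone, and the coupling terms are pulled back from the base of $f$ or of $\pi$) and non-degenerate on $\ker(Df)$ — this restriction is untouched, which is all condition~(\ref{cond: symplectic fibers}) and the definition of the symplectic connection require. Extending $\omega'$ by $\omega$ over $X\setminus\mathcal U$ then yields a closed $2$-form on $E$ that coincides with $\omega$ off the collar $\mathcal U_0$ of $\partial E$, giving item (3); satisfies $\omega'|_{F_w}=\omega|_{F_w}$ for every $w\in\C$, giving item (2); and is a product $\omega_{\C}+d\lambda_F$ on the inner part $E_h\subset\mathcal U_0$, under the product identification $E_h\cong\C\times[0,\infty)\times\partial F$ coming from the cylindrical end of the fibres, which is exactly horizontal triviality and completes item (1).

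The technical heart, and the only real obstacle, is this interpolation: one must keep the $2$-form closed, keep its restriction to \emph{every} fibre literally equal to $\omega|_{F_w}$ (not merely cohomologous) and undeformed near $\Crit(f)$, and reach a genuine product on $E_h$, all simultaneously — this is delicate because a naive convex combination $\Omega+\chi\,d\beta$ need not be closed unless $\beta$ is chosen compatibly with the radial structure. Carrying it out is the Weinstein-domain analogue of the computation in the proof of \cite[Theorem~2.3]{Go} that already underlies the preceding proposition; the rest is bookkeeping of neighbourhoods and conventions, notably ensuring that the fibre excised in the preceding proposition is the one whose closure in $X$ is $\overline F_\infty$.
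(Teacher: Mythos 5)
Your plan (perturb $\omega$ only in a shell near $\partial\nu_X(B)$, keep the fibrewise restriction untouched, reach a product form on the inner collar) matches the paper's strategy, but the interpolation you write down contains a genuine error. You replace $f^{*}\omega_{\mathrm{FS}}$ by $\chi(r)\,f^{*}\omega_{\mathrm{FS}}+(1-\chi(r))\,f^{*}\omega_{\C}$ and assert that the result is closed ``because each summand is closed and $\chi$ depends on $r$ alone.'' That is false: $d\bigl(\chi(r)\,f^{*}\omega_{\mathrm{FS}}\bigr)=\chi'(r)\,dr\wedge f^{*}\omega_{\mathrm{FS}}\neq 0$, and likewise for the $\omega_{\C}$ term; these do not cancel since $\omega_{\mathrm{FS}}$ and $\omega_{\C}$ are unrelated $2$-forms on the base. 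You do flag at the end that a naive convex combination ``need not be closed unless $\beta$ is chosen compatibly with the radial structure,'' which is exactly the problem with the formula you proposed, but you never supply the correct $\beta$. So the technical heart of the proposition — interpolating while preserving closedness, preserving the fibrewise restriction, and reaching horizontal triviality all at once — is not merely elided; it is missing.

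The paper resolves this by interpolating \emph{primitives} rather than $2$-forms, choosing them so that their difference vanishes on every fibre. Concretely, one uses the \emph{fibrewise Liouville flow} of $\Omega$ to identify a collar of $\partial N_0(r_0)$ with $(-\tau,0]\times(\partial N_0(r_0)\cap f_\nu^{-1}([1:0]))\times\C$ via a map $\Phi$; sets $\alpha_0\coloneqq\lambda|_{\partial N_0(r_0)\cap f_\nu^{-1}([1:0])}$; and puts $\lambda'\coloneqq\bigl(e^t\alpha_0-d^{\C}\log(|w|^2+1)\bigr)-\Phi^{*}\lambda$. The crucial observation is that $\lambda'$ restricts to zero on every fibre of $f_\nu$. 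Then $\eta'\coloneqq d(\Phi^{*}\lambda)+d(\rho(t)\lambda')$ is automatically closed (it is exact), equals $\Phi^{*}\Omega$ near $t=-\tau$, equals the horizontally trivial form near $t=0$, and has the \emph{same} restriction as $\Omega$ on every fibre because $\rho(t)\lambda'$ vanishes fibrewise. Two ideas are absent from your write-up: (i) interpolate at the level of Liouville primitives so closedness is free, and (ii) use the fibrewise Liouville flow, not the radial coordinate $r$, to produce the collar coordinate $t$ in which the target primitive $e^t\alpha_0-d^{\C}\log(|w|^2+1)$ makes sense and has the correct fibrewise restriction. Without these, the interpolation cannot simultaneously be closed and fibre-preserving, and the proposition does not follow from what you have written.
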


\begin{proof}
Set $L \coloneqq N_{B/\overline{F}_\infty}$ and take a Hermitian metric on it. 
Throughout the proof, we use the same notations as in Section \ref{section: Whitney sum} such as $L^{\oplus 2}(r)$ and $N_0(r_0,r_1)$. 
We also always equip a disc bundle $L^{\oplus 2} (r)$ for $r<1$ with the symplectic form $\Omega$ unless otherwise noted. 

Thanks to the conditions (\ref{condition: LF}) and (\ref{condition: LP symplectic}) in Definition \ref{def: LP}, it is sufficient to perturb $\omega$ near the boundary $\del E$ to complete the proof. 
The condition (\ref{condition: nbhd}) in the same definition enables us to take a tubular neighborhood $\nu'_{X}(B)$ of $B$ that is symplectomorphic to the interior of the disc bundle, $\Int (L^{\oplus 2}(r_1))$, of radius $r_1$, where $f$ agrees with projectivization $f_\nu \colon L^{\oplus 2}(r_1) \setminus B\rightarrow \CP^1$. 
Choose $r_0$ with $0<r_0<r_1$ and let $\nu_{X}(B)$ denote a tubular neighborhood of $B$ symplectomorphic to $\Int (L^{\oplus 2}(r_0))$. 
With these identifications, we will deform $\Omega$ on $N_0(r_0, r_1)$ instead of $\omega$ on $\nu'_{X}(B) \setminus \nu_{X}(B)$ in the rest of the proof. 

The idea of the deformation is similar to that of \cite[Lemma 9.3]{Keating_free}.
As we saw in Section \ref{section: Whitney sum}, 
$\Omega$ is exact on $N_0(r_0,r_1)$, i.e., $\Omega=d\lambda$; 
in particular, so is $\Omega$ on each fiber of $f_\nu|_{N_0(r_0,r_1)}$. 
Hence, one can take a fiberwise Liouville vector field $V'$ on $N_0(r_0,r_1)$.
Identify a collar neighborhood $\nu_{N_0(r_0,r_1)}(\del N_0(r_0))$ of $\del N_0(r_0)$ with $(-\tau, 0] \times (\del N_0(r_0) \cap f_\nu^{-1}([1:0])) \times \C$ by the diffeomorphism $\Phi \colon (-\tau, 0] \times (\del N_0(r_0) \cap f_\nu^{-1}([1:0])) \times \C \rightarrow \nu_{N_0(r_0,r_1)}(\del N_0(r_0))$ defined by 
$$
	\Phi(t, (z_0,0), z) = \phi_{t}^{V'}\left( \frac{z_0}{\sqrt{|z|^2+1}}, \frac{z_0z}{\sqrt{|z|^2+1}} \right), 
$$ 
where $\phi_{t}^{V'}$ with $t \in (-\tau, 0]$ denotes the fiberwise Liouville flow of $V'$. 
Let $\alpha_0 \coloneqq \lambda|_{\del N_0(r_0) \cap f_\nu^{-1}([1:0])}$ and 
$$
	\lambda' \coloneqq \left(e^{t}\alpha_{0}-\frac{1}{4\pi}d^{\C}\log(|z|^2+1) \right) -\Phi^{*}\lambda. 
$$
Notice that the $1$-form $\lambda'$ vanishes on every fiber of $f_\nu$. 
Now define the $2$-form $\eta'$ on $(-\tau, 0] \times (h^{-1}(\epsilon) \cap f_{\nu}^{-1}([1:0])) \times \C$ by $\eta' = d(\Phi^* \lambda)+d(\rho(t)\lambda')$, where $\rho \colon (-\tau, 0] \rightarrow [0,1]$ is a smooth cut-off function such that $\rho(t)=0$ near $t=-\tau$ and $\rho(t)=1$ near $t=0$.
It follows from the choice of $\rho$ that the pushforward $\Omega'=\Phi_*(\eta')$ extends over the whole $N_0(r_0,r_1)$ in such a way that $\Omega'=\Omega$ away from $\nu_{N_0(r_0,r_1)}(\del N_0(r_0))$. 
Moreover, $\del N_{0}(r_0)$ is a connected component of the horizontal boundary of $N_{0}(r_0,r_1)$ for $f_\nu$ defined by $\Omega'$. 
Hence, the map $f_\nu \colon (N_0(r_0,r_1), \Omega')\rightarrow \C$ satisfies the condition for the horizontal triviality in Remark \ref{rem: horizontal}, which concludes that $\omega'$ is the desired perturbation of $\omega$ near the boundary $\del N_0(r_0)$. 
\end{proof}

\subsection{Relation between a fibered Dehn twist and a product ofDehn twists}\label{section: boundary-interior}

Let $(W, \omega)$ be a Liouville domain with Liouville vector field $X$ and set $\alpha \coloneqq (i_X\omega)|_{\del W}$. 
Fix an identification of a collar neighborhood $\nu_W(\del W)$ of $\del W$ with $((-\epsilon, 0] \times \del W, d({e^t}\alpha))$.
Suppose that all Reeb orbits of $\alpha$ are $2\pi$-periodic. 

A \textit{fibered Dehn twist} along $\del W$ is a symplectomorphism of $(W, \omega)$ defined by
$$
	\tau_{\del W} (p) = 
	\begin{cases}
	 (t, \phi_{g(t)}^{\alpha} (x)) & \mathrm{if\ } p=(t,x) \in  (-\epsilon, 0] \times \del W \cong \nu_W(\del W), \\
	p & \mathrm{otherwise},
	\end{cases}
$$
where $\phi_{t}^{\alpha}$ ($t \in \R$) denotes the Reeb flow of $\alpha$ and $g \colon (-\epsilon, 0] \rightarrow \R$ is a smooth function such that $g(t)$ equals $2\pi$ near $t=-\epsilon$ and $0$ near $t=0$. 
A fibered Dehn twist has compact support by definition. 
Also, it is easy to see that $\tau_{\del W}$ is independent of the choice of $g$ up to Hamiltonian isotopy. 

Under a certain condition, a fibered Dehn twist can be factored into a product ofDehn twists: 

\begin{theorem}[{Gompf \cite[p.271]{Go} and Auroux \cite[p.6]{Au}}]\label{thm: boundary-interior}
Let $(X, \omega)$ be a closed integral symplectic manifold admitting a Lefschetz pencil $f \colon X \setminus B \rightarrow \CP^{1}$ and $W$ the complement of a tubular neighborhood of the base locus $B$ in the closure of a regular fiber of $f$. 
Then, a fibered Dehn twist $\tau_{\del W}$ along the boundary $\del W$ is symplectically isotopic to a product of Dehn twists along Lagrangian spheres in $(W, \omega|_{W})$. 
In other words, $[\tau_{\del W}] \in \pi_0(\Symp_{c}(W,\omega|_{W}))$ factors into a product of isotopy classes of Dehn twists. 
\end{theorem}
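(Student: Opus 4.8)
The plan is to realize the fibered Dehn twist along $\del W$ as the global monodromy of a Lefschetz fibration obtained from the given Lefschetz pencil, and then read off its factorization into Dehn twists from a distinguished basis of vanishing paths. Concretely, I would first invoke the proposition of Section \ref{section: LP to LF}: after perturbing $\omega$ near the boundary of $E \coloneqq X \setminus (\overline{F}_\infty \cup \nu_X(B))$, the restriction $f|_E \colon (E, \omega') \to \C$ is a Lefschetz fibration with horizontal triviality, whose regular fiber is (the interior of) $W = \overline{F}_w \setminus \nu_X(B)$ and whose fiberwise symplectic form agrees with $\omega|_{W}$. Since $f|_E$ has finitely many critical values $z_1, \dots, z_k$, choosing a distinguished basis $(\gamma_1, \dots, \gamma_k)$ of vanishing paths yields, by the discussion in Section \ref{section: LFs}, vanishing cycles $V(\gamma_i)$ that are framed Lagrangian spheres in $(W, \omega|_W)$, and the global monodromy of $f|_E$ is symplectically isotopic to $\tau_{V(\gamma_k)} \circ \cdots \circ \tau_{V(\gamma_1)}$.

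The second half of the argument is to identify this global monodromy with the fibered Dehn twist $\tau_{\del W}$. The global monodromy is the parallel transport of the symplectic connection around a large counterclockwise loop $\ell_0(t) = z_0 e^{2\pi i t}$ in the base, equivalently the monodromy of the portion of $f|_E$ lying over the region near $\infty$, which by the horizontal triviality is modeled on the Whitney-sum picture of Section \ref{section: Whitney sum}. There, the relevant region $N_0(r_0, r_1)$ fibers over the annulus in $\CP^1$, and the symplectic connection is controlled by the connection $1$-form $\alpha$ with $d\alpha = -2\pi(\pi^*\omega_B - f^*\omega_{\mathrm{FS}})$. I would compute the holonomy of this connection around the boundary circle $\del \mathcal{U}'$ and check that parallel transport once around acts on the fiber $\del W$-collar precisely by the Reeb flow of $\alpha \coloneqq \lambda|_{\del W}$ through total angle $2\pi$, interpolated by a cut-off exactly as in the definition of $\tau_{\del W}$ — using that near $B$ the contact structure on $\del W$ is the prequantization-type bundle whose Reeb flow is the $S^1$-action, so all Reeb orbits are $2\pi$-periodic and the hypothesis of the fibered Dehn twist setup is met. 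This matches the local model computation underlying the Liouville vector field $(r^2-1)/(2r)\,\del_r$ and the function $h([z_0,z_1,re^{i\theta}]) = r^2$ from Section \ref{section: Whitney sum}.

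I expect the main obstacle to be the careful identification, up to Hamiltonian isotopy, between the abstract global monodromy living in $\Symp_c(F_0, \omega_{F_0})$ — defined via a basepoint $z_0$ with $|z_0|$ large and parallel transport along $\ell_0$ — and the fibered Dehn twist $\tau_{\del W}$ defined on the model collar $\nu_W(\del W) \cong ((-\epsilon,0] \times \del W, d(e^t\alpha))$. One must show that the symplectic parallel transport around the large circle, when restricted to the horizontally-trivial end, is supported in a collar of $\del W$ and equals the Reeb rotation there, while being the identity on the compact core; and that the freedom in choosing $g$ (the cut-off in the fibered twist) and the cut-off $\rho$ in the boundary perturbation both disappear up to Hamiltonian isotopy. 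This is essentially the content of \cite[p.271]{Go} and \cite[p.6]{Au}, so I would follow those references, citing the computation that the monodromy near a fiber at infinity of a Lefschetz pencil is a fibered Dehn twist, and then combining it with the factorization of the global monodromy into the $\tau_{V(\gamma_i)}$ to conclude $[\tau_{\del W}] = [\tau_{V(\gamma_k)} \circ \cdots \circ \tau_{V(\gamma_1)}] \in \pi_0(\Symp_c(W, \omega|_W))$. A subsidiary point to address is that the vanishing cycles are genuine Lagrangian \emph{spheres} (not merely Lagrangian submanifolds), which follows from condition (5) in Definition \ref{def: Lefschetz} on the nondegeneracy of the complex Hessian at each critical point.
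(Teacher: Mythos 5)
Your proposal is correct as far as it goes, but it takes a genuinely different route from the one the paper actually writes out in Appendix~\ref{appendix: boundary-interior}. You follow the original Gompf--Auroux line: pass from the Lefschetz pencil to a Lefschetz fibration over $\C$ via the boundary perturbation of Section~\ref{section: LP to LF}, read the factorization of the global monodromy into Dehn twists $\tau_{V(\gamma_i)}$ from a distinguished basis of vanishing paths, and then \emph{directly identify} the parallel transport around a large circle near $\infty$ with the fibered Dehn twist $\tau_{\partial W}$ by computing the holonomy of the connection $\alpha$ on the horizontally trivial end in the Whitney-sum model. This last identification is exactly the content you correctly attribute to \cite{Go} and \cite{Au}; the burden of your argument is that delicate collar computation.

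The paper instead avoids this computation altogether. In the projective case it constructs a Lefschetz--Bott fibration $\pi_0 \colon \O_M(-1) \to \C$ (Proposition~\ref{prop: LBF}) whose critical locus is a copy of $B$, so that the monodromy around the unique critical value is, \emph{by definition of the monodromy of a Lefschetz--Bott fibration}, a fibered Dehn twist along the corresponding coisotropic vanishing cycle $\partial W$. It then builds a one-parameter family $\widetilde{\mathcal{Y}}$ of total spaces interpolating between this Lefschetz--Bott fibration at $t=0$ and the honest Lefschetz fibration $\pi_t \colon Y_t \to \C$ for $t \ne 0$ obtained from the pencil, fibered over $[0,1] \times \C$. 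Because the two loops $\ell_0 \subset \{0\} \times \C$ and $\ell_1' \subset \{1\} \times \C$ (pushed back to $t=0$ by the path $\gamma$) are homotopic in the complement of the critical value set of the combined map $\Pi$, their monodromies agree up to symplectic isotopy; one is $\tau_{\partial W}$, the other is a product of Dehn twists. What the paper's approach buys is that it never has to verify by hand that the holonomy at infinity is the Reeb $2\pi$-rotation cut off in a collar --- that fact is packaged into the Lefschetz--Bott formalism. What it costs is the restriction to projective $X$ and the need to set up the resolved family $\widetilde{\mathcal{Y}}$ and the line-bundle Lefschetz--Bott fibration from Proposition~\ref{prop: LBF}. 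Your route is more elementary in its ingredients and would apply in the general symplectic setting of Gompf and Auroux, but it leans on the monodromy-at-infinity computation you defer to those references rather than carrying it out.

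One small point to tighten in your writeup: the claim that condition (5) of Definition~\ref{def: Lefschetz} makes the vanishing cycles Lagrangian \emph{spheres} is right, but it runs through the local K\"ahler model near each critical point (condition (4)) together with the nondegenerate Hessian, which yields the $z_1^2 + \cdots + z_n^2$ normal form and hence the Picard--Lefschetz sphere; the Hessian condition alone does not immediately give sphericity without the holomorphic local model.
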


In Appendix \ref{appendix: boundary-interior}, we give a proof of Theorem \ref{thm: boundary-interior} from the viewpoint of Lefschetz--Bott fibrations, restricting the theorem to the case where $X$ is complex projective.

\section{Symplectic isotopy problem on del Pezzo surfaces}\label{section: isotopy}

To prove the main theorem (Theorem \ref{thm: relation}), we will symplectically identify the complements of homologous symplectic submanifolds of (real) dimension $2$ in a K\"{a}hler surface, that is, a K\"{a}hler manifold of complex dimension $2$. 
In general, such complements are not symplectomorphic or even diffeomorphic each other.  
In some special cases, a solution to the \textit{symplectic isotopy problem} provides a symplectomorphism between the complements.  

\begin{problem}[Symplectic isotopy problem]\label{prob: isotopy}
Let $(M,\omega, J)$ be a K\"ahler surface and $S$ a symplectic submanifold of $(M, \omega)$ of $\dim_{\R}S=2$. 
Then, is $S$ symplectically isotopic to a complex curve in $(M,J)$?
\end{problem}

Suppose that the problem is solved affirmatively for $(M, \omega, J)$ and also suppose that any two homologous smooth complex curves in $(M, \omega, J)$ are symplectically isotopic. 
Then, this isotopy yields the desired symplectomorphism; see Proposition \ref{prop: del Pezzo symplectomorphism} below.  
The goal of this section is to solve this problem for del Pezzo surfaces via holomorphic curve techniques.

\subsection{Preliminaries of holomorphic curves}\label{section: holomorphic curves}

\subsubsection{Moduli spaces of holomorphic curves}

Let $(\Sigma, j)$ be a smooth Riemann surface and $(M,J)$ an almost complex manifold. 
A map $u \colon (\Sigma,j) \rightarrow (M,J)$ is called a \textit{pseudo-holomorphic curve}, or \textit{$J$-holomorphic curve} if $u$ is $(j,J)$-holomorphic, that is, $du \circ j=J \circ du$. 
$J$-holomorphicity can also be defined in the case where $(\Sigma, j)$ is a nodal Riemann surface: 
For the normalization $\varphi \colon \tilde{\Sigma}=\coprod_{i} \tilde{\Sigma}_{i} \rightarrow \Sigma$ of $\Sigma$, 
set $\varphi_{i}=\varphi|_{\Sigma_{i}}$. 
A map $u \colon (\Sigma, j) \rightarrow (M,J)$ is said to be \textit{$J$-holomorphic} if each $u \circ \varphi_{i}$ is $J$-holomorphic. 
Let us write $(\Sigma, j, \bm{x})$ for a marked nodal Riemann surface with $m$ marked points $\bm{x}=\{x_1, \ldots, x_m\} \subset \Sigma$. 
A \textit{stable} $J$-holomorphic curve $u \colon (\Sigma, j, \bm{x}) \rightarrow (M,J)$ is a $J$-holomorphic curve with the finite automorphism group $\mathrm{Aut}(u)=\{\phi \in \mathrm{Aut}(\Sigma,j, \bm{x}) \mid u \circ \phi =u \}$. 
Fix $A \in H_{2}(M; \Z)$ and a finite subset $\bm{z}=\{z_1, \ldots, z_m\}$ of $M$. 
We denote the moduli space of (unparametrized) $J$-holomorphic stable curves of genus $g$ in the class $A$ passing through $\bm{z}$ by 
$$\overline{\M}_{g ,m}(A;J;\bm{z})$$ 
and, given a symplectic structure $\omega$ on $M$, set 
$$
	\overline{\M}_{g,m}(A; \mathcal{J}_{\tau}; \bm{z})=\bigcup_{J \in \mathcal{J}_{\tau}(M,\omega)} \overline{\M}_{g,m}(A;J;\bm{z}) \times \{J\},  
$$
where $\mathcal{J}_{\tau}(M,\omega)$ is the set of smooth $\omega$-tame almost complex structures on $M$. 
Here an element of $\overline{\M}_{g,m}(A;J;\bm{z})$ is an isomorphism class of a stable $J$-holomorphic curve $u \colon (\Sigma, j, \bm{x}) \rightarrow (M,J)$ such that $(\Sigma, j)$ is a nodal Riemann surface of genus $g$, $[u(\Sigma)]=A$ and $u(x_{i}) = z_i$ for every $i$. 
When $m=0$, we will suppress $m$ and $\bm{z}$ from the notations of the moduli spaces. 
Also put 
$$
	\M_{g ,m}(A;J;\bm{z})=\{[\Sigma, j, u, \bm{x}] \in \overline{\M}_{g,m}(A; \mathcal{J}_{\tau}; \bm{z}) \mid \Sigma \textrm{ is smooth}\}, 
$$
$$
	\M_{g,m}(A; \mathcal{J}_{\tau}; \bm{z})=\bigcup_{J \in \mathcal{J}_{\tau}(M,\omega)} \M_{g,m}(A;J;\bm{z}) \times \{J\}.
$$
We often call an element of $\M_{g ,m}(A;J;\bm{z})$ and $\M_{g ,m}(A; \mathcal{J}_{\tau};\bm{z})$ a \textit{smooth} holomorphic curve.

As we will discuss a local structure of moduli spaces, we need to topologize these moduli spaces. 
We endow $\overline{\M}_{g,m}(A; \mathcal{J}_{\tau}; \bm{z})$ with the \textit{$C^{0}$-topology} and $\overline{\M}_{g,m}(A;J;\bm{z})$ with the subspace topology (see \cite[Section 3]{Sie}). 
Although we omit the definition of the $C^0$-topology, one of the things to keep in mind is that with this topology the moduli space $\overline{\M}_{g,m}(A;J;\bm{z})$ is compact and Hausdorff. 

\subsubsection{Holomorphic curves in dimension four} 

Holomorphic curves dealt with in this paper are only over symplectic $4$-manifolds. 
Such holomorphic curves are well studied, and here we will collect results for them. 

Two complex curves in a complex surface intersect positively. 
The study of local structures of holomorphic curves shows an almost complex analogue of this phenomenon: 

\begin{theorem}[{Positivity of intersections \cite{McD}, \cite{MW}}]\label{thm: positivity}
Let $(M,J)$ be an almost complex manifold of $\dim_{\R}M=4$. 
Suppose that $u_0 \colon (\Sigma_0, j_0) \rightarrow (M,J)$ and $u_1 \colon (\Sigma_1, j_1) \rightarrow (M,J)$ are smooth connected $J$-holomorphic curves whose images are not identical. 
Then, $u_0$ and $u_1$ intersect at most finitely many points, and the homological intersection number $[u_0(\Sigma_0)] \cdot [u_1(\Sigma_1)]$ satisfies 
$$
	[u_0(\Sigma_0)] \cdot [u_1(\Sigma_1)] \geq |\{ (z_0,z_1) \in \Sigma_0 \times \Sigma_1 \mid u_0(z_0)=u_1(z_1)\}|, 
$$
with equality if and only if all the intersections are transverse.
\end{theorem}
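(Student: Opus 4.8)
The plan is to reduce the statement to a local analysis at each intersection point and then to a classical fact about plane-curve singularities. Set
$Z = \{(z_0,z_1) \in \Sigma_0 \times \Sigma_1 \mid u_0(z_0)=u_1(z_1)\}$.
First I would show that $Z$ is discrete. Fix $p=u_0(z_0)=u_1(z_1)$ and work in local coordinates near $p$; comparing the two maps, the "difference" $u_0-u_1$ (suitably interpreted) satisfies a Cauchy--Riemann-type equation with a zeroth-order term, so by the similarity principle (Carleman) it either vanishes identically or has an isolated zero. If $Z$ accumulated somewhere, then $u_0$ and $u_1$ would parametrize the same curve near that point, and since $\Sigma_0,\Sigma_1$ are connected, unique continuation would force $u_0(\Sigma_0)=u_1(\Sigma_1)$, contradicting the hypothesis. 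Hence $Z$ is discrete; since $\Sigma_0\times\Sigma_1$ is compact (the curves being closed, as in all our applications), $Z$ is finite.

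Next, to each $p\in Z$ I would attach a local intersection index $\iota_p\in\Z_{>0}$. The crux is the local normal form for $J$-holomorphic curves in dimension four due to McDuff \cite{McD} and, in sharp form, Micallef--White \cite{MW}: after a $C^1$ change of coordinates on a neighborhood of $p$, the germs at $p$ of the two curves become germs of honest complex-analytic curves in $(\C^2,0)$ for the standard integrable structure, and this can be arranged simultaneously for both branches. Granting this, $\iota_p$ is defined as the classical local intersection multiplicity of two distinct complex-analytic curve germs in $(\C^2,0)$ --- e.g. $\dim_\C \O_{\C^2,0}/(g_0,g_1)$ for local defining functions $g_0,g_1$, or the linking number of their links in a small $3$-sphere. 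Complex-analytic geometry then gives $\iota_p\geq 1$, with $\iota_p=1$ if and only if both branches are smooth at $p$ and meet transversally; because the normal form is compatible with $J$-holomorphicity, transversality in the model is equivalent to transversality of $u_0$ and $u_1$ at $p$.

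It remains to identify the homological number with $\sum_{p\in Z}\iota_p$. Since $Z$ is finite, one may make a $C^0$-small (not necessarily $J$-holomorphic) perturbation of $u_1$ supported near the points of $Z$ that turns each cluster of intersections near $p$ into exactly $\iota_p$ transverse intersections, all of the same (positive) sign --- again this is the local complex-analytic computation, using that the local degree of the model is $\iota_p$. The perturbed map represents the same homology class, so $[u_0(\Sigma_0)]\cdot[u_1(\Sigma_1)]=\sum_{p\in Z}\iota_p$. Combining, $[u_0(\Sigma_0)]\cdot[u_1(\Sigma_1)]=\sum_{p}\iota_p\geq |Z|$, with equality exactly when every $\iota_p=1$, that is, when all intersections are transverse.

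The main obstacle is unquestionably the local normal form itself: the Micallef--White theorem that a $J$-holomorphic curve germ in an almost complex $4$-manifold is $C^1$-equivalent to a complex-analytic curve germ, which lets one import intersection multiplicities wholesale from complex analysis. This rests on the full asymptotic analysis of solutions of the perturbed Cauchy--Riemann equation near a point --- the similarity principle, Taylor-type expansions with a well-defined tangent cone, and a delicate coordinate construction --- and is the technical heart of \cite{McD} and \cite{MW}. Everything else in the argument is either soft topology or textbook singularity theory.
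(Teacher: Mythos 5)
The paper does not prove Theorem~\ref{thm: positivity}; it is stated and attributed to \cite{McD} and \cite{MW}, so there is no internal proof to compare against. Your sketch is a faithful outline of the argument one finds in those references (and in expository treatments such as \cite{Wendl}, \cite{WenBook}): discreteness of the coincidence set via the similarity principle and unique continuation, a positive local intersection index extracted from the Micallef--White $C^1$ local normal form, a small perturbation to compare the sum of local indices with the homological count, and the characterization of the equality case. You correctly identify the $C^1$-equivalence to a complex-analytic germ as the technical heart.

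Two small points worth tightening. First, the local index should be attached to a pair $(z_0,z_1)\in Z$, not to the image point $p\in M$: if $u_0$ is not injective near its preimage of $p$, several elements of $Z$ can map to the same $p$, and each must be counted separately; your argument works once ``near $p$'' is read as ``for the two germs $(u_0,z_0)$ and $(u_1,z_1)$.'' Second, applying the Micallef--White normal form simultaneously to both curves is not automatic from the single-curve statement; one either treats the union of the two germs as a single (reducible) $J$-holomorphic germ through $p$, or one follows the more elementary route (as in \cite{Wendl}) that avoids the full normal form and works directly with the similarity principle and a degree computation for the ``difference map.'' Either fix closes the gap; as written, the phrase ``this can be arranged simultaneously for both branches'' is asserted rather than justified.
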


Suppose that $u \colon (\Sigma, j) \rightarrow (M,J)$ is a \textit{simple} $J$-holomorphic curve, i.e., $u$ cannot split into a non-trivial holomorphic branched covering $(\Sigma,j) \rightarrow (\Sigma', j')$ and a $J$-holomorphic curve $u' \colon (\Sigma', j') \rightarrow (M,J)$. 
Set 
$$
	\mathcal{D}(u)=\{ \{z,z'\} \in \mathcal{P}(\Sigma) \mid u(z)=u(z'),  z \neq z'\} \textrm{\quad and \quad }
\mathcal{C}(u)=\{z \in \Sigma \mid du(z)=0\},$$
where $\mathcal{P}(\Sigma)$ is the power set of $\Sigma$. 
Let $\delta(u;z,z')$ denote the \textit{local interesection index} for $\{z,z'\} \in \mathcal{D}(u)$ of $u$ (see \cite[Section 2.10]{Wen_Lecture}).
Also let $\delta(u;z)$ denote the \textit{virtual number of double points} of $u$ at $z \in \mathcal{C}(u)$ (see \cite[Theorem 7.3]{MW} and also \cite[Section 10]{Milnor}). 
We define the integer $\delta(u)$ by 
\begin{equation}\label{eqn: delta}
	\delta(u) \coloneqq \sum_{\{z,z'\} \in \mathcal{D}(u)}\delta(u;z,z')+\sum_{z \in \mathcal{C}(u)}\delta(u;z) \in \Z. 
\end{equation}
By definition, $u \colon (\Sigma, j) \rightarrow (M,J)$ is an embedding with the smooth domain $(\Sigma,j)$ if and only if $\delta(u)=0$. 
We also note that $\delta(u) \geq 0$ for any $J$-holomorphic curve $u$. 

\begin{theorem}[{Adjunction formula \cite{McD}, 
\cite{MW}}]\label{thm: adjunction}
Let $(M,J)$ be an almost complex manifold of $\dim_{\R}M=4$ and $u \colon (\Sigma,j) \rightarrow (M,J)$ a simple $J$-holomorphic curve. 
Also let $\tilde{\Sigma}=\coprod_{j=1}^{d} \tilde{\Sigma}_{j} \rightarrow \Sigma$ be the normalization of $\Sigma$. 
Then we have 
\begin{equation}\label{genus formula}
	2\delta(u) = [u(\Sigma)]^2-c_1(TM)([u(\Sigma)])+\sum_{j=1}^{d}\chi(\tilde{\Sigma}_{j}). 
\end{equation}
\end{theorem}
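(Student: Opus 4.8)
The plan is to reduce the statement to the classical adjunction formula for an immersed surface, by replacing $u$ with a nearby immersion that has only transverse double points. The analytic input I would invoke is the Micallef--White local representation theorem (the main content of \cite{McD} and \cite{MW}): near each of its critical points and each of its self-intersections, a simple $J$-holomorphic curve in a $4$-manifold admits $C^{1}$ coordinates on domain and target in which it agrees to leading order with an honest holomorphic model $z \mapsto (z^{k},\, z^{k+1}\phi(z))$. Together with the similarity principle and the hypothesis that $u$ is simple, this forces $\mathcal{C}(u)$ and the self-intersection locus of $u$ to be discrete, hence finite since $\Sigma$ is compact; so the sums in (\ref{eqn: delta}) are finite and the local invariants $\delta(u;z)$ and $\delta(u;z,z')$ are well defined. (The same local analysis underlies Theorem \ref{thm: positivity}.) I would prove the formula first for $\Sigma$ smooth; the general case follows by composing with the normalization $\varphi \colon \tilde\Sigma \to \Sigma$ and observing that the nodes of $\Sigma$ appear among the transverse double points produced below, so that they are counted by $\delta$ on one side of the formula and replaced by the Euler characteristics of the $\tilde\Sigma_{j}$ on the other.

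Next I would use the local models to build a perturbation $u_{\varepsilon} \colon \tilde\Sigma \to M$ of $u$, supported in arbitrarily small neighborhoods of the critical and double points and representing the class $[u(\Sigma)]$, which is an immersion whose only singularities are transverse, positive double points. By the definitions of the virtual double-point number $\delta(u;z)$ at a critical point and of the local intersection index $\delta(u;z,z')$ at a self-intersection pair, the number $D(u_{\varepsilon})$ of double points of $u_{\varepsilon}$ equals $\delta(u)$; positivity of the local models also yields $\delta(u) \geq 0$.

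Finally I would apply the immersed adjunction identity to $u_{\varepsilon}$. Writing $N$ for the normal bundle of the immersion and using $TM|_{u_{\varepsilon}(\tilde\Sigma)} = Tu_{\varepsilon}(\tilde\Sigma) \oplus N$ together with $c_{1}(T\tilde\Sigma_{j})[\tilde\Sigma_{j}] = \chi(\tilde\Sigma_{j})$, one obtains $e(N) = c_{1}(TM)([u(\Sigma)]) - \sum_{j=1}^{d}\chi(\tilde\Sigma_{j})$. On the other hand, pushing $u_{\varepsilon}(\tilde\Sigma)$ off itself along a generic section of $N$ and counting intersections gives $[u(\Sigma)]^{2} = e(N) + 2 D(u_{\varepsilon})$, since each transverse double point contributes $2$. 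Substituting the first relation into the second and using $D(u_{\varepsilon}) = \delta(u)$ yields $2\delta(u) = [u(\Sigma)]^{2} - c_{1}(TM)([u(\Sigma)]) + \sum_{j=1}^{d}\chi(\tilde\Sigma_{j})$, as claimed.

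The step I expect to be the genuine obstacle is the first, analytic one: proving the Micallef--White normal form with enough regularity to produce the immersion $u_{\varepsilon}$ and to ensure that the local counts $\delta(u;z)$ and $\delta(u;z,z')$ coincide with the actual double-point count of $u_{\varepsilon}$. This is precisely what \cite{McD} and \cite{MW} supply; granting it, the remaining steps are the same Chern-class and Euler-characteristic bookkeeping as in the integrable case, where the formula specializes to the usual adjunction formula $2g - 2 = [C]^{2} - c_{1}(TM)([C])$ for a smooth complex curve $C$ of genus $g$.
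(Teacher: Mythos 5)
The paper does not prove Theorem~\ref{thm: adjunction}; it cites it to \cite{McD} and \cite{MW} and uses it as a black box. Your sketch reconstructs the standard proof from those references accurately: the Micallef--White local parametrization supplies the finiteness of the singular set and the well-definedness of $\delta(u;z)$ and $\delta(u;z,z')$, the local models produce an immersed perturbation $u_\varepsilon$ with exactly $\delta(u)$ transverse positive double points, and the Chern-class bookkeeping $[u(\Sigma)]^2 = e(N) + 2D(u_\varepsilon)$ together with $c_1(TM)([u(\Sigma)]) = e(N) + \sum_j\chi(\tilde\Sigma_j)$ gives the identity. One small caution worth making explicit: in the nodal case, the definition~(\ref{eqn: delta}) of $\delta(u)$ takes $\mathcal{D}(u)$ and $\mathcal{C}(u)$ inside $\Sigma$ itself, so the preimages of a node are glued and do not give a pair in $\mathcal{D}(u)$; when you pass to the normalization $\tilde\Sigma$ they \emph{do} become such a pair, and the bookkeeping only closes because the change in $\sum_j\chi(\tilde\Sigma_j)$ versus $\chi(\Sigma)$ exactly offsets the extra double-point contributions. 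Stating that cancellation cleanly would tighten the argument, but the overall structure is correct and is precisely what the cited sources establish.
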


The moduli space $\M_{g,m}(A;J;\bm{z})$ is not a manifold in general. 
One sufficient condition for $\M_{g,m}(A;J;\bm{z})$ to be a manifold is that every element $[\Sigma, j, u, \bm{x}] \in \M_{g,m}(A;J;\bm{z})$ is \textit{Fredholm regular} (see e.g. \cite{Wen_Lecture} for its definition). 
In dimension $4$, this regularity is guaranteed by a homological condition: 

\begin{theorem}[Automatic regularity \cite{HLS}]\label{thm: HLS}
Let $(M,J)$ be an almost complex manifold of $\dim_{\R}M=4$ and $[\Sigma, j, u, \bm{x}] \in \mathcal{M}_{g,m}(A;J;\bm{z})$ an immersed $J$-holomorphic curve. 
If $$c_1(TM)(A)>m,$$ then $[\Sigma, j, u, \bm{x}] $ is Fredholm regular. 
\end{theorem}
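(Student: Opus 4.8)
The plan is to reduce Fredholm regularity to a vanishing statement for the cokernel of the linearized Cauchy–Riemann operator, and then to exploit the special features of complex dimension $2$ — namely, that the normal bundle of an immersed curve is a genuine complex line bundle — to make this vanishing purely numerical. First I would set up the linearization $D_u$ of $\bar\partial_J$ at the immersed curve $u\colon(\Sigma,j)\to(M,J)$, acting between the usual Sobolev completions of sections of $u^*TM$ (twisted appropriately by the marked-point constraints at $\bm x$) and of $(0,1)$-forms valued in $u^*TM$; Fredholm regularity means precisely that $D_u$ is surjective. Since $u$ is immersed, there is a holomorphic splitting $u^*TM \cong T\Sigma \oplus N_u$ of complex vector bundles, where $N_u$ is the normal line bundle, and $D_u$ is upper-triangular with respect to this splitting. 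The tangential part is automatically surjective (it is essentially $\bar\partial$ on $T\Sigma$ together with the automorphisms/deformations of the domain), so the whole argument comes down to surjectivity of the induced normal operator $D_u^N$ on sections of $N_u$ cut out by the point constraints.

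Next I would identify $D_u^N$ with a perturbed $\bar\partial$-operator on the complex line bundle $N_u\to\Sigma$ and invoke the standard fact that such an operator is a Fredholm operator of real index $2\langle c_1(N_u),[\Sigma]\rangle + 2 - 2g$ (this is Riemann–Roch for the line bundle $N_u$, with $\chi(\Sigma)=2-2g$); after twisting down by the $m$ marked points to impose the incidence conditions at $\bm z$, the index drops by $2m$. The key analytic input — this is the genuinely $4$-dimensional phenomenon, due essentially to a positivity/Carleman-estimate argument in the style of \cite{HLS} — is that a nontrivial element in the kernel of the adjoint of such a twisted $\bar\partial$-operator on a line bundle has only isolated zeros, each contributing positively, so the existence of a nontrivial cokernel element forces $\langle c_1(N_u),[\Sigma]\rangle - m < 0$, i.e. $\deg(N_u \otimes \mathcal O(-\bm x)) < 0$ obstructs cokernel elements in the opposite regime. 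Turning this around: if $\langle c_1(N_u),[\Sigma]\rangle \ge m$, then $D_u^N$ (twisted by the constraints) is surjective, hence so is $D_u$, hence $u$ is Fredholm regular. It remains only to convert the hypothesis $c_1(TM)(A) > m$ into the bound $\langle c_1(N_u),[\Sigma]\rangle \ge m$: from the splitting $u^*TM\cong T\Sigma\oplus N_u$ one gets $c_1(TM)(A) = \langle c_1(T\Sigma),[\Sigma]\rangle + \langle c_1(N_u),[\Sigma]\rangle = (2-2g) + \langle c_1(N_u),[\Sigma]\rangle$, so the hypothesis reads $\langle c_1(N_u),[\Sigma]\rangle > m - (2-2g)$; for $g\le 1$ this already gives $\langle c_1(N_u),[\Sigma]\rangle \ge m$ (integrality), and for higher genus one needs the slightly sharper positivity count, but in the applications of this paper $\Sigma$ will be a sphere so the easy case suffices.

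The main obstacle I anticipate is the analytic heart of the middle step: proving that a nonzero solution of the formal-adjoint equation for a twisted $\bar\partial$-operator on a complex line bundle over $\Sigma$ cannot vanish on a set of positive measure, and that its zeros carry a well-defined positive local multiplicity summing to the degree. This is where the restriction to $4$-manifolds is essential — it is exactly the statement that makes automatic transversality work for immersed curves in surfaces and fail in higher dimensions. I would either cite the Carleman-similarity-principle argument of \cite{HLS} directly (which is the honest thing to do, since the theorem is attributed to them) or reproduce the short version: write the adjoint equation locally as $\bar\partial \xi = a\xi$ for a bounded $a$, apply the Carleman similarity principle to get $\xi = e^{\phi}\zeta$ with $\zeta$ honestly holomorphic and $\phi$ continuous, and read off the local zero structure of $\xi$ from that of $\zeta$. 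Everything else — the splitting of $D_u$, the index computation, the algebraic manipulation of $c_1$ — is routine bookkeeping that I would not grind through in detail.
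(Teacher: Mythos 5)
The paper does not prove this theorem---it is cited from Hofer--Lizan--Sikorav---so there is no internal proof to compare to. Your reduction (split $u^*TM \cong T\Sigma \oplus N_u$, reduce regularity to surjectivity of the normal operator $D_u^N$, then use the similarity-principle positivity count on the cokernel) is the right route and is the one HLS take. But there is a genuine numerical error in the step that turns positivity of zeros into a degree inequality, and it propagates into your conclusion.

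The cokernel of a Cauchy--Riemann-type operator on the line bundle $E := N_u \otimes \mathcal{O}(-\bm x)$, of degree $c_1(N_u) - m$ over a genus-$g$ surface $\Sigma$, is identified (formal adjoint / Serre duality) with the kernel of a CR-type operator on a line bundle of degree $(2g-2) - (c_1(N_u) - m)$: the extra $2g-2$ is the degree of $K_\Sigma = \Lambda^{1,0}T^*\Sigma$ coming from the $\Lambda^{0,1}$ factor in the codomain of $D_u^N$. You dropped this term. The similarity-principle argument therefore rules out a nonzero cokernel element precisely when $(2g-2) - (c_1(N_u)-m) < 0$, i.e.\ $c_1(N_u) - m > 2g-2$, \emph{not} when $c_1(N_u) - m \ge 0$ as you claim. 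With the correct threshold, the conversion is clean and genus-uniform: $c_1(N_u) = c_1(TM)(A) - \chi(\Sigma) = c_1(TM)(A) - 2 + 2g$, so $c_1(N_u) - m > 2g - 2$ is literally equivalent to $c_1(TM)(A) > m$. No integrality trick, no case split on $g$, and no ``slightly sharper positivity count for higher genus'' is needed; the hypothesis as stated is exactly what the HLS criterion asks for.

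Because of the wrong threshold you were forced to add the caveat that the argument only closes for $g \le 1$ and then say that ``in the applications of this paper $\Sigma$ will be a sphere so the easy case suffices.'' That last claim is factually wrong: the paper's principal application of this theorem, in the proof of Theorem~\ref{thm: isotopy}, is to the moduli space $\M_{1,8-n}(-K_n;J;\bm z)$ of genus-one curves (anti-canonical divisors of Del Pezzo surfaces are elliptic), with $c_1(TM(n))(-K_n) = 9-n$ and $m = 8-n$, so the condition is the borderline $9-n > 8-n$. A threshold that is sharp in $g$ is therefore essential here, and fortunately the corrected computation gives exactly that.
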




Let $(\Sigma, j)$ be a nodal Riemann surface. 
A $J$-holomorphic curve $u \colon (\Sigma,j) \rightarrow (M,J)$ is said to be \textit{nodal} if $u$ is an embedding, that is, for the regularization $\varphi \colon \tilde{\Sigma}=\coprod_{j} \tilde{\Sigma}_{j} \rightarrow \Sigma$, $u \circ \varphi$ is an embedding on each $\tilde{\Sigma}_{j}$ and the image $(u \circ \varphi) (\tilde{\Sigma})$ has distinct tangent spaces at all the nodes. 
The following theorem of Sikorav tells us that the stratum consisting of nodal holomorphic curves is at least (real) codimension $2$ in the compactified moduli space: 

\begin{theorem}[{\cite[Corollary 1.4]{Sik}}]\label{thm: Sikorav}
Let $M$ be a $4$-manifold with an almost complex structure $J$ for which all elements of $\M_{g,m}(A;J; \bm{z})$ are Fredholm regular.
Let $[\Sigma, j, u, \bm{x}]$ be an element of $\overline{\M}_{g,m}(A;J;\bm{z})$ such that $u$ is a nodal curve with $k$ nodes and $\bm{x}$ does not contain nodes. 
We denote the normalization of $\Sigma$ by $\varphi \colon \tilde{\Sigma}=\coprod_{i=1}^{r} \tilde{\Sigma}_{i} \rightarrow \Sigma$ and set $u_i = u \circ \varphi|_{\tilde{\Sigma}_{i}} \colon \tilde{\Sigma}_{i} \rightarrow M$. 
Suppose that it holds that 
$$
	c_1(TM)([u_i(\tilde{\Sigma}_i)])>|\bm{z} \cap u_i(\tilde{\Sigma}_i)|
$$
for all $i=1,\ldots,r$.
Then, a neighborhood of $[\Sigma, j, u, \bm{x}]$ in $\overline{\M}_{g,m}(A;J;\bm{z})$ is homeomorphic to an open neighborhood of the origin in $\C^{\dim_{\C}\M_{g,m}(A;J;\bm{z})}$ with the expected complex dimension of the moduli space $\M_{g,m}(A;J;\bm{z})$ 
$$
\dim_{\C} \M_{g,m}(A;J;\bm{z}) =c_1(TM)(A)+g-1-m.
$$ 
The subset parametrizing nodal $J$-holomorphic curves is a union of complex coordinate hyperplanes $\{(\xi_1, \ldots, \xi_{k}) \in \C^k \mid \xi_i=0\} \times \C^{\dim_{\C} \M_{g,m}(A;J;\bm{z})-k}$. 
\end{theorem}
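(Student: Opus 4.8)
The plan is to prove Theorem~\ref{thm: Sikorav} as a gluing theorem: near the nodal curve $[\Sigma, j, u, \bm{x}]$ I want to build an explicit chart for $\overline{\M}_{g,m}(A;J;\bm{z})$ whose coordinates split into deformation parameters of the nodal configuration together with one complex smoothing parameter $\xi_\ell$ for each of the $k$ nodes. By Gromov compactness every $J$-holomorphic curve $C^0$-close to $u$ is either smooth and Gromov-close to $u$ or nodal with combinatorial type a partial smoothing of that of $u$, so the job is to parametrize both families at once. First I would normalize: write $\varphi \colon \tilde{\Sigma} = \coprod_{i=1}^{r} \tilde{\Sigma}_i \to \Sigma$ for the normalization, $u_i = u \circ \varphi|_{\tilde{\Sigma}_i}$, and let $p_\ell^{\pm}$ be the two preimages of the $\ell$-th node. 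Since $u$ is a nodal curve each $u_i$ is immersed, so Theorem~\ref{thm: HLS} applies to $u_i$ with the preimages of $\bm{z}$ on $\tilde{\Sigma}_i$ as constrained marked points and the node preimages as free marked points; the hypothesis $c_1(TM)([u_i(\tilde{\Sigma}_i)]) > |\bm{z} \cap u_i(\tilde{\Sigma}_i)|$ is exactly the automatic-regularity inequality, so each $u_i$ is Fredholm regular and its moduli space is, near $u_i$, a complex manifold of the expected dimension. The moduli space $\mathcal{N}$ of nodal curves of the fixed combinatorial type near $u$ is then the fiber product of these $\M(u_i)$ over $M^{2k}$ along the evaluations at the pairs $\{p_\ell^{+},p_\ell^{-}\}$, cut out by the diagonal $\Delta_k \subset M^{2k}$; because $u$ is a nodal curve the two branches at each node meet transversally, which together with a Riemann--Roch/positivity argument in dimension $4$ (the relevant normal bundles carry enough sections to prescribe values at the node marked points) makes this fiber product transverse, so $\mathcal{N}$ is a complex manifold of dimension $d - k$, where $d \coloneqq \dim_{\C}\M_{g,m}(A;J;\bm{z})$.

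Next I would carry out the gluing. To each node attach a complex plumbing parameter $\xi_\ell$ --- with $\xi_\ell = 0$ meaning ``leave the node unsmoothed'' --- and preglue the components across the $\ell$-th node with parameter $\xi_\ell$; this produces, for each point of $\mathcal{N}$ and each $(\xi_1,\dots,\xi_k) \in \C^k$, an approximately $J$-holomorphic curve of class $A$ through $\bm{z}$. Fredholm regularity --- guaranteed by the standing hypothesis on $J$ together with the regularity of the pieces --- makes the linearized operator surjective, so the obstruction bundle vanishes and the implicit function theorem corrects the approximate solution to a genuine $J$-holomorphic curve, uniquely in a small ball. This defines a map $G \colon \mathcal{N} \times \C^k \to \overline{\M}_{g,m}(A;J;\bm{z})$; it is continuous, and since the $\ell$-th node is smoothed precisely when $\xi_\ell \neq 0$, the preimage under $G$ of the set of non-smooth curves is $\bigcup_{\ell=1}^{k}\{\xi_\ell = 0\}$, a union of $k$ coordinate hyperplanes in $\C^{d} \cong \C^{k} \times \C^{d-k}$. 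Injectivity of $G$ follows from the uniqueness clause of the implicit function theorem together with the triviality of the reparametrization ambiguity --- this is where the embeddedness of $u$ is used --- and $G$ is compatible with the complex structures because the deformation theory of the regular pieces, the plumbing, and the correction term all depend holomorphically on the parameters.

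The hard part will be proving that $G$ is \emph{surjective} onto a neighborhood of $[\Sigma, j, u, \bm{x}]$, i.e.\ that every $J$-holomorphic curve $C^0$-close to $u$ actually arises from this construction. Here I would argue by contradiction from a sequence of nearby curves not in the image of $G$: Gromov compactness forces a subsequential limit whose components refine those of $u$, and positivity of intersections and the adjunction formula (Theorems~\ref{thm: positivity} and~\ref{thm: adjunction}) pin down the homology classes and rule out the unwanted degenerations --- extra bubbling, multiply covered components, or nodal types not dominated by that of $u$ --- so the nearby curve must in fact be a partial smoothing of $u$ of one of the types already covered; a soft uniqueness argument then identifies it with an element of $G(\mathcal{N}\times\C^k)$, a contradiction. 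Once surjectivity is in hand the dimension bookkeeping is routine: the expected dimensions of the components sum to $d - k$ and the $k$ gluing parameters restore the missing $k$, so $G$ is the asserted local homeomorphism onto a neighborhood of the origin in $\C^{d}$ carrying the nodal locus to a union of complex coordinate hyperplanes.
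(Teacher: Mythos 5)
The paper does not supply a proof of Theorem~\ref{thm: Sikorav}: it is cited as \cite[Corollary~1.4]{Sik}, and the reader is pointed to \cite[pp.998--1000]{ST} for a sketch. Your plan --- normalize, apply automatic regularity (Theorem~\ref{thm: HLS}) to the embedded components, realize the nodal stratum as a fiber product along node-evaluations, plumb each node with a complex parameter, correct by the implicit function theorem, and obtain surjectivity from Gromov compactness plus positivity of intersections and adjunction --- is exactly the gluing argument used in those references, and your dimension bookkeeping for the fiber product (each node costs a complex codimension $1$ and supplies one gluing parameter) is correct.

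One point worth being more careful about, since it is where the $4$-dimensional hypotheses really earn their keep: the transversality of the evaluation map to the diagonal $\Delta_k \subset M^{2k}$ does \emph{not} follow merely from $c_1(TM)([u_i]) > |\bm{z}\cap u_i(\tilde\Sigma_i)|$ applied componentwise together with a vague ``Riemann--Roch/positivity'' appeal. What you need is the following: since each $u_i$ is embedded, the normal bundle $N_{u_i}$ is a line bundle, and at a node $q$ the two branches split $T_q M = T_q C_i \oplus T_q C_j$. Fredholm regularity of $u_i$ (after twisting down by the constrained points of $\bm z$ lying on $u_i$) together with the HLS-type vanishing of $H^1$ gives surjectivity of the evaluation $H^0(N_{u_i}(-\bm z)) \to N_{u_i,p_\ell^{\pm}}$, and under the splitting this image is $T_q C_j$; the contribution from the other branch is $T_q C_i$, and together they span $T_q M$. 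Spelling this out, and checking that the twisting down by the fixed $\bm z$-points does not destroy the $H^1$-vanishing (this is the content of the standing regularity and inequality hypotheses, not of naive degree counting), is precisely the content of Sikorav's argument. With that step made precise, your proposal is a faithful reconstruction of the proof from the cited sources.
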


In addition to Sikorav's original paper \cite{Sik}, the reader is referred to \cite[pp.998--1000]{ST} for a sketch of the proof of Theorem \ref{thm: Sikorav}.

\subsection{Symplectic isotopy problem}\label{section: solution}

We shall now discuss Problem \ref{prob: isotopy} for del Pezzo surfaces. 
Let us begin by recalling the definition of del Pezzo surface. 

\begin{definition}
A \textit{del Pezzo surface} $M$ is a smooth complex projective surface with ample anti-canonical class $-K_{M}$. 
The number $K_{M}^{2} \coloneqq K_{M} \cdot K_{M}$ is called the \textit{degree} of $M$.
\end{definition}

It is well known that a del Pezzo surface of degree $d$ is diffeomorphic to either 
$\CP^2 \# \overline{\CP}^2$ or $S^2 \times S^2$ if $d=8$; otherwise, 
$\CP^2 \# (9-d)\overline{\CP}^2$ (see \cite[Theorem 24.4]{Manin}). 
Moreover, according to \cite[Remark 24.4.1]{Manin}, all del Pezzo surfaces of degree $d$ for $5 \leq d \leq 7$ are biholomorphic.

The following theorem is the main result in this section. 

\begin{theorem}\label{thm: isotopy}
Let $M$ be a del Pezzo surface diffeomorphic to $\CP^2 \# n \overline{\CP}^2$ ($0 \leq n \leq 8$) and $\omega$ a K\"ahler form on $M$. 
Suppose that $S$ is a symplectic submanifold of $(M,\omega)$, of $\dim_{\R}S=2$, homologous to an anti-canonical divisor. 
Then, $S$ is symplectically isotopic to a smooth complex curve. 
Furthermore, two such symplectic submanifolds on $M$ are mutually symplectically isotopic if $0 \leq n \leq 6$. 
\end{theorem}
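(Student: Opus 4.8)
The plan is to prove Theorem~\ref{thm: isotopy} in two stages, following the strategy that has become standard for symplectic isotopy results of Gromov--Taubes type: first establish the \emph{isotopy-to-holomorphic} statement for a single submanifold $S$, and then \emph{connect} any two smooth anti-canonical complex curves through a family of symplectic submanifolds. For the first stage I would fix an $\omega$-compatible (or $\omega$-tame) almost complex structure $J_S$ for which $S$ is $J_S$-holomorphic --- this exists by a standard argument since $S$ is a symplectic submanifold --- and connect $J_S$ to the integrable complex structure $J$ by a generic path $\{J_t\}_{t\in[0,1]}$ of $\omega$-tame almost complex structures. One then studies the parametrized moduli space $\M_{g,0}(A;\mathcal{J}_\tau)$ restricted to this path, where $A=[S]=[-K_M]$ is the anti-canonical class and $g$ is the genus of $S$ (which, by the adjunction formula Theorem~\ref{thm: adjunction} applied to the complex curve case, is determined by $A^2 - c_1(TM)(A) + 2 - 2g = 0$, i.e.\ $g = 1 - \frac{1}{2}(A^2 - c_1 A)$; for $A = -K_M$ one gets $A^2 = K_M^2$ and $c_1(A) = -K_M\cdot K_M\cdot(-1)=\ldots$ so the relevant genus is small). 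The key point is that $c_1(TM)(A) = -K_M\cdot(-K_M) = K_M^2 = 9-n > 0$, and more importantly the \emph{automatic regularity} Theorem~\ref{thm: HLS} and Sikorav's codimension-$2$ bound Theorem~\ref{thm: Sikorav} guarantee that along a generic path the moduli space of embedded curves in class $A$ is a smooth manifold that stays away from the nodal/multiply-covered strata, provided $c_1(TM)(A)$ dominates the relevant quantities. Positivity of intersections (Theorem~\ref{thm: positivity}) then keeps curves embedded throughout the homotopy.

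The technical heart of the first stage is a \emph{compactness/non-degeneration} argument: one must show that as $t$ varies, the $J_t$-holomorphic curve representing $S$ at $t=1$ cannot degenerate --- neither bubble off spheres nor acquire nodes --- before reaching $t=0$, so that it survives as a smooth embedded $J$-holomorphic, hence complex, curve, and the trace of this family gives the symplectic isotopy from $S$ to a complex curve. Here the hypothesis that $M$ is Del Pezzo is essential: the positivity of $-K_M$ bounds the energy, and any bubble sphere would carry a class $B$ with $0 < \omega(B) < \omega(A)$ and $c_1(TM)(B) > 0$, forcing $B$ to be represented by an embedded sphere of non-negative self-intersection (a $-1$-curve or a line/conic/fiber class), which one checks cannot be split off from the anti-canonical curve without violating positivity of intersections or the adjunction inequality. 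I would organize this as a case analysis on $n$ (equivalently on $K_M^2 = 9-n$), or more uniformly by invoking the classification of $(-1)$-curves and the structure of the effective cone of a Del Pezzo surface. I expect \emph{this degeneration-control step to be the main obstacle}: ruling out all possible bubble and node configurations uniformly for $0\le n\le 8$ requires carefully combining the adjunction formula, positivity of intersections, and the known geometry of the anti-canonical system on Del Pezzo surfaces (e.g.\ that $|-K_M|$ is base-point free and of the expected dimension for $n \le 8$, with an honest pencil structure when convenient).

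For the second stage --- the ``furthermore'' clause, valid for $0\le n\le 6$ --- the goal is to show any two smooth complex curves $C_0, C_1$ in the anti-canonical class are symplectically isotopic; combined with the first stage this upgrades to: any two symplectic submanifolds homologous to $-K_M$ are symplectically isotopic. The natural approach is to exploit that for $n \le 6$ the anti-canonical linear system $|-K_M|$ is very ample (or at least base-point free with enough sections), so $C_0$ and $C_1$ both sit in the (connected, in fact rational) variety $\mathbb{P}(H^0(M,-K_M))$ of anti-canonical divisors; a generic path in the smooth locus of this projective space connects them through smooth complex curves, and rescaling within the Kähler class (or applying Moser along the family) produces a genuine symplectic isotopy. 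One must check that the discriminant locus (singular members) has real codimension $\ge 2$ in $\mathbb{P}(H^0(M,-K_M))$ so that a generic path avoids it --- this is where $n \le 6$ enters, since for $n = 7, 8$ the anti-canonical system is smaller and the generic anti-canonical curve, while still irreducible, may force a less flexible family (indeed the statement is claimed only for $n \le 6$). A clean alternative for this stage: use the uniqueness-up-to-isotopy of the moduli space from the first stage together with the fact that for a \emph{fixed} generic $J$ close to $J_{\mathrm{std}}$, all embedded $J$-holomorphic curves in class $A$ through a suitable point form a connected family --- I would likely present whichever of these two routes gives the shorter argument. Finally, I would assemble the pieces: Theorem~\ref{thm: HLS} for regularity, Theorem~\ref{thm: Sikorav} to confine degenerations to codimension $\ge 2$, Theorem~\ref{thm: positivity} and Theorem~\ref{thm: adjunction} to preserve embeddedness, and the Del Pezzo geometry to rule out bubbling, concluding the symplectic isotopy.
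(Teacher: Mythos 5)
Your high-level strategy for the first stage (fix a tame $J_S$ making $S$ holomorphic, connect it to the integrable structure by a generic path, track the moduli space, and prevent degeneration) is the right framework, and your second stage (use very ampleness of $-K_M$ for $n\le 6$, Bertini, and connectedness of the space of smooth hyperplane sections) matches the paper's Lemma~3.13 almost exactly. However, there is a genuine gap in the first stage: you propose to work with the unconstrained moduli space $\M_{g,0}(A;\mathcal{J}_\tau)$, and this is exactly where the argument will fail. For $A=-K_n$ and genus $g=1$ the expected complex dimension of the unconstrained space is $c_1(TM)(A)+g-1 = 9-n$, which is too large for Theorem~\ref{thm: Sikorav} to isolate a one-parameter family: the nodal strata are merely complex codimension one inside a $(9-n)$-complex-dimensional space, so there is no way to argue that a path in $J$-space can be chosen to avoid them, and no way to glue a smoothing after a limit curve has degenerated.

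The crucial device you are missing is the \emph{point constraint}: the paper fixes $8-n$ marked points $\bm z$ on $S$ and works with $\M_{1,8-n}(-K_n;J;\bm z)$. This is the largest number of points for which automatic regularity still applies (one needs $c_1(TM)(A)=9-n > m$, so $m=8-n$ is optimal), and it cuts the expected complex dimension down to exactly $1$. Over a path of regular $J$'s this gives a real $3$-manifold in which, by Theorem~\ref{thm: Sikorav}, the nodal locus is real codimension $2$, so the complement of the nodal locus remains connected near a degenerate limit and the open-and-closed argument closes. Moreover, the entire combinatorial analysis that you correctly flag as ``the main obstacle'' --- ruling out bubbling of multiply covered or reducible limits (the paper's Lemma~\ref{lem: simpleness}) and pinning down the limit to be either an embedded genus-$1$ curve or a one-nodal sphere (Lemma~\ref{lem: N=1}) --- is carried out by counting how many of the $8-n$ fixed points each component can absorb via the constrained moduli-space dimension formula. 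Without the point constraints those counts have no teeth, and your ``positivity of $-K_M$ bounds the energy'' heuristic does not by itself exclude, say, a degenerate limit with a multiply-covered rational component. You would also need to address ghost (constant) components in the Gromov limit, which the paper handles explicitly in Lemma~\ref{lem: N=1}; this issue is invisible in your sketch. Finally, a minor point: your adjunction computation should simply give $g=1$ (since $A^2-c_1(TM)(A)=K_M^2-K_M^2=0$), not merely ``small''.
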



This theorem has been partially proven by Sikorav \cite[Theorem 1.5]{Sik} and Shevchishin \cite[Theorem 1]{Shev} for the case $n=0$ and Siebert and Tian \cite[Theorem B]{ST} for the case $n=0,1$; see also \cite[Proposition 3.6]{LiMak} where Li and Mak gave a generalization of the theorem to symplectic divisors in the latter case. 

We will prove Theorem \ref{thm: isotopy} by combining several lemmas.  
Before getting into the details, let us fix the notations. 
Throughout the rest of this section, we set 
$$
	M(n) \coloneqq \CP^2 \# n \overline{\CP}^2
$$
and write $J(n)$ for an integrable almost complex structure on $M(n)$ and $\omega_0$ for a K\"{a}hler form on $M(n)$ with respect to $J(n)$. 
We denote the homology classes of a complex line and $n$ disjoint exceptional spheres in $M(n)$ by 
$$
H, E_{1}, \ldots, E_{n} \in H_{2}(M(n); \Z),
$$
respectively. 
The homology class $K_n \in H_2(M(n); \Z)$ is defined to be 
$$
	K_n \coloneqq -3H+\sum_{i=1}^{n}E_i. 
$$
Given $A\in H_2(M(n); \Z)$, we define the set $\mathcal{J}_{\textrm{reg}}(M(n); A)$ of $\omega_0$-tame almost complex structures $J$ on $M(n)$ for which every simple $J$-holomorphic curve in the class $A$ is Fredholm regular. 
According to \cite[Section 6.2]{MSbook} and \cite[Chapter 5]{Chang}, given $8-n$ points $\bm{z}$ on $M(n)$, there exists a residual subset $\mathcal{J}_{\mathrm{reg}}^{*}(n; \bm{z})$ of $\mathcal{J}_{\tau}(M(n), \omega_0)$ satisfying the following properties: 
\begin{itemize}
\item  $\mathcal{J}_{\mathrm{reg}}^{*}(n; \bm{z})$ is path-connected.

\item For every  $J \in \mathcal{J}_{\mathrm{reg}}^{*}(n; \bm{z})$ and any decomposition $-K_n=A_1+\cdots + A_m$ consisting of non-trivial spherical classes $A_i$, every simple $J$-holomorphic sphere representing $A_i$ is Fredholm regular.

\item For any $J \in \mathcal{J}_{\mathrm{reg}}^{*}(n; \bm{z})$, all $J$-holomorphic spheres in the class $-K_n$ passing through $\bm{z}$ have only nodes as singularities. 
\end{itemize}

\begin{lemma}\label{lem: positivity}
Let $J$ be an element of $\mathcal{J}_{\mathrm{reg}}(M(n); dH-\sum_{i=1}^{n}e_i E_i)$ and  $u \colon (\Sigma,j) \rightarrow (M(n),J)$ a non-constant smooth simple $J$-holomorphic curve of genus at most $1$. 
Suppose that there is no $i_0 \in \{1,\ldots, n\}$ such that $dH-\sum_{i=1}^{n}e_iE_i = E_{i_0}$.
If $[u(\Sigma)]=dH-\sum_{i=1}^{n}e_i E_i \in H_2(M(n); \Z)$, then, $d > 0$ and $e_{i} \geq 0$. 
\end{lemma}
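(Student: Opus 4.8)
The plan is to argue by intersection-theoretic positivity, using the given Del Pezzo-specific almost complex structures and the foliation-type properties of $M(n)$ by holomorphic lines and exceptional spheres. Fix $J \in \mathcal{J}_{\mathrm{reg}}(M(n); dH-\sum e_iE_i)$ and a simple $J$-holomorphic curve $u$ of genus at most $1$ with $[u(\Sigma)] = A \coloneqq dH-\sum e_iE_i$, $A \neq H, E_i$. First I would record the basic positivity constraint: since $H$ is represented by a $J$-holomorphic sphere through any two prescribed points (and in particular such lines sweep out all of $M(n)$), and $E_i$ is represented by an embedded $J$-holomorphic exceptional sphere, Theorem~\ref{thm: positivity} applied to $u$ against these curves gives $A \cdot H \geq 0$ and $A \cdot E_i \geq 0$ — provided the image of $u$ is not one of those curves, which is exactly the hypothesis $[u(\Sigma)] \neq H, E_i$ after noting that two distinct simple curves cannot share an image. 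This immediately yields $d = A \cdot H \geq 0$ and $e_i = A \cdot E_i \geq 0$ (using $E_i \cdot E_i = -1$, $E_i \cdot E_j = 0$, $H \cdot E_i = 0$).

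The remaining point is to upgrade $d \geq 0$ to $d > 0$, i.e.\ to rule out $d = 0$. If $d = 0$ then $A = -\sum e_i E_i$ with all $e_i \geq 0$, and then $A \cdot A = -\sum e_i^2 \leq 0$. Next I would invoke the adjunction formula (Theorem~\ref{thm: adjunction}): for the simple curve $u$ of genus $g \leq 1$ with connected smooth domain we get $2\delta(u) = A^2 - c_1(TM(n))(A) + \chi(\Sigma) = A^2 + K_n \cdot A + (2 - 2g)$, where $c_1(TM(n)) = -K_n$ in homology. With $A = -\sum e_iE_i$ one computes $A^2 = -\sum e_i^2$ and $K_n \cdot A = (-3H + \sum E_j)\cdot(-\sum e_iE_i) = -\sum e_i$ (since $E_j \cdot E_i = -\delta_{ij}$, so $(\sum E_j)\cdot(-\sum e_iE_i) = \sum e_i$; recheck signs carefully). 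Thus $2\delta(u) = -\sum e_i^2 - \sum e_i + 2 - 2g \leq 2 - 2g \leq 2$, and $\delta(u) \geq 0$ forces $\sum e_i^2 + \sum e_i \leq 2 - 2g$. For $g = 1$ this gives $\sum e_i^2 + \sum e_i \leq 0$, hence all $e_i = 0$ and $A = 0$, contradicting $u$ non-constant. For $g = 0$ it gives $\sum e_i^2 + \sum e_i \leq 2$, so the only possibilities are a single $e_i = 1$ (whence $A = -E_i$) or $A = 0$; the class $-E_i$ is not $\omega_0$-positive (it has negative symplectic area since $E_i$ does have positive area), so no $J$-holomorphic curve represents it, and $A = 0$ is again excluded. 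Hence $d = 0$ is impossible and $d > 0$.

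I would present the argument in that order: (1) distinct-image observation plus positivity of intersections against the line and exceptional-sphere classes to get $d \geq 0$, $e_i \geq 0$; (2) adjunction plus the genus bound to exclude $d = 0$. The main obstacle — really the only subtle point — is justifying that the classes $H$ and $E_i$ are genuinely represented by $J$-holomorphic curves for \emph{this} $J$, and that these representatives are simple with the stated self-intersections, so that Theorem~\ref{thm: positivity} applies cleanly; this is where one leans on $J \in \mathcal{J}_{\mathrm{reg}}(M(n); \cdot)$ together with the standard Gromov-theory fact (automatic genericity in dimension $4$ via Theorem~\ref{thm: HLS}, since $c_1(TM(n))(H) = 3 > 0$ and $c_1(TM(n))(E_i) = 1 > 0$) and a Gromov-compactness argument showing the line/exceptional classes cannot bubble for topological reasons. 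Everything else is bookkeeping with the intersection form of $\CP^2 \# n\overline{\CP}^2$.
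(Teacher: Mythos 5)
Your step (1) — positivity of intersections against $J$-holomorphic representatives of $H$ and $E_i$ to get $d \geq 0$ and $e_i \geq 0$ — is exactly the paper's argument and is fine. The problem is step (2), where you try to rule out $d = 0$ via the adjunction formula. You have a sign error: you correctly compute $(\sum_j E_j)\cdot(-\sum_i e_i E_i) = \sum_i e_i$, and since $H\cdot E_i = 0$ this gives $K_n\cdot A = +\sum_i e_i$, not $-\sum_i e_i$. With the correct sign, adjunction yields $2\delta(u) = -\sum e_i^2 + \sum e_i + 2 - 2g$, so $\delta(u)\geq 0$ only gives $\sum_i e_i(e_i-1) \leq 2 - 2g$. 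This is far weaker than what you claim: for $g=1$ it allows any $A = -\sum e_i E_i$ with all $e_i\in\{0,1\}$ (e.g.\ $A = -E_1-E_2-E_3$ has $\delta(u)=0$), and for $g=0$ it allows even more. So the adjunction inequality alone does not exclude $d=0$.

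The paper instead rules out $d=0$ by a moduli-space dimension count, using the hypothesis $J\in\mathcal{J}_{\mathrm{reg}}(M(n);A)$ in an essential way: since $c_1(TM(n))(-\sum e_iE_i) = -\sum e_i$, the expected real dimension of the space of simple genus-$g$ $J$-curves in this class is $-\chi(\Sigma)+2c_1(TM(n))(A) = 2(g-1-\sum e_i)$, which is strictly negative once some $e_i>0$ (forced by $u$ being non-constant) and $g\leq 1$. Fredholm regularity then makes this moduli space an empty manifold, contradicting the existence of $u$. Interestingly, the observation you make in passing — that the class $-E_i$ has negative $\omega_0$-area — actually gives the cleanest possible fix if you apply it directly to the general class: any $A = -\sum e_iE_i$ with $e_i\geq 0$ and some $e_i>0$ has $\omega_0(A) = -\sum e_i\,\omega_0(E_i) < 0$, whereas a non-constant $J$-holomorphic curve in an $\omega_0$-tame $J$ has strictly positive $\omega_0$-area; no adjunction or dimension count is needed. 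You should drop the adjunction computation entirely and use either the paper's dimension argument or this area argument.
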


\begin{proof}
We first note that $J$-holomorphic spheres in the classes $H$ and $E_i$ always exist (see \cite[Theorem 5.1]{WenBook} and its proof), and by assumption $u$  does not coincide with any $J$-holomorphic sphere in each class $E_i$. 
Also, it is possible to take a $J$-holomorphic sphere in the class $H$ that is not identical to $u$ because of the positivity of the index of such a holomorphic sphere.
Hence, the positivity of intersections (Theorem \ref{thm: positivity}) shows that $d \geq 0$ and $e_i \geq 0$ for each $i$. 
Suppose $d=0$. 
Then, the (real) dimension of the moduli space of simple $J$-holomorphic curves of genus $g(\Sigma)$ in the class $-\sum_{i=1}^{n}e_i E_i$ is given by 
$$
	-\chi(\Sigma)+2\left( c_{1}(TM(n))\left(-\sum_{i=1}^{n}e_{i}E_{i}\right) \right)=2\left(g(\Sigma)-1-\sum_{i=1}^{n}e_i \right).
$$
With $g(\Sigma)=0$ or $1$, we have $2\left(g(\Sigma)-1-\sum_{i=1}^{n}e_i \right) \leq -2\sum_{i=1}^{n}e_i$.
Note that $u$ is non-constant and there must be at least one positive $e_{i}$. 
Hence, the dimension of the moduli space is negative, which contradicts the existence of $u$. 
Thus, $d>0$.
\end{proof} 

The next corollary follows from the above lemma and its proof immediately. 

\begin{corollary}\label{cor: positivity}
Let $u \colon (\Sigma, j) \rightarrow (M(n), J)$ be a non-constant smooth simple $J$-holomorphic curve of genus at most $1$ with $J \in \mathcal{J}_{\mathrm{reg}}(M(n); dH-\sum_{i=1}^{n}e_i E_i)$. 
If $u(\Sigma)$ does not intersect a $J$-holomorphic sphere in the class $H$, then $[u(\Sigma)]=E_{i_0}$ for some $i_0$.
\end{corollary}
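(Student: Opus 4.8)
The plan is to reduce the statement to Lemma \ref{lem: positivity} by showing that if $[u(\Sigma)] = dH - \sum_i e_i E_i$ with $d > 0$, then $u(\Sigma)$ must meet a $J$-holomorphic sphere in the class $H$. By Lemma \ref{lem: positivity} (and the explicit exclusion of the cases $[u(\Sigma)] = H, E_i$ in its hypotheses), the only classes left to worry about are genuinely $[u(\Sigma)] = E_{i_0}$ — which is exactly the desired conclusion — and $[u(\Sigma)] = H$, which we must rule out under the non-intersection assumption. So first I would dispose of the case $[u(\Sigma)] = H$: as noted in the proof of Lemma \ref{lem: positivity}, a $J$-holomorphic sphere $C_H$ in class $H$ always exists (\cite[Theorem 5.1]{WenBook}); two distinct curves in class $H$ have homological intersection $H \cdot H = 1 > 0$, so by positivity of intersections (Theorem \ref{thm: positivity}) $u(\Sigma)$ would meet $C_H$, contradicting the hypothesis — unless $u(\Sigma) = C_H$, but then $u$ is a sphere and the corollary's conclusion $[u(\Sigma)] = E_{i_0}$ fails, so this case genuinely needs the non-intersection hypothesis to be excluded, and indeed it is.

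Next, for the main case: suppose $[u(\Sigma)] = dH - \sum_{i=1}^n e_i E_i$ with $d > 0$ (which holds by Lemma \ref{lem: positivity} unless the class is one of the excluded ones $H$ or $E_i$). Pick a $J$-holomorphic sphere $C_H$ in class $H$. If $u(\Sigma) = C_H$, then $d = 1$ and all $e_i = 0$, so $[u(\Sigma)] = H$, handled above. Otherwise the images are distinct, and the homological intersection number is $[u(\Sigma)] \cdot H = (dH - \sum_i e_i E_i) \cdot H = d > 0$; by Theorem \ref{thm: positivity} this forces a genuine intersection point, contradicting the hypothesis that $u(\Sigma)$ does not meet any $J$-holomorphic sphere in class $H$. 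Hence the only surviving possibility is that $[u(\Sigma)]$ is one of the classes excluded from Lemma \ref{lem: positivity}, namely $H$ or $E_i$; the class $H$ is excluded as above, so $[u(\Sigma)] = E_{i_0}$ for some $i_0$.

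The one point requiring a little care — and the only real obstacle — is making sure the genus hypothesis and simplicity are used correctly when invoking positivity of intersections against $C_H$: Theorem \ref{thm: positivity} applies to smooth connected $J$-holomorphic curves with non-identical images, so I should confirm $u$ is connected (it is, having domain a connected Riemann surface of genus at most $1$) and handle the coincidence $u(\Sigma) = C_H$ separately, as above. Everything else is a direct homological computation against $H$, so the proof is short; the bookkeeping of which classes Lemma \ref{lem: positivity} does and does not cover is the thing to state cleanly.
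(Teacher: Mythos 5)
Your proposal is correct and takes essentially the same approach the paper intends (the paper simply remarks that the corollary ``follows from the above lemma and its proof immediately''): disjointness from a sphere in class $H$ forces $[u(\Sigma)]\cdot H = d = 0$ by positivity of intersections, so Lemma~\ref{lem: positivity} leaves only the excluded classes $H$ or $E_i$, and $H$ is ruled out because $H\cdot H = 1 > 0$ would again give an intersection. Your organization (dispose of $H$ first, then run the $d>0$ contradiction) is just a different ordering of the same two ingredients, and the worries you flag about connectedness and the coincidence $u(\Sigma)=C_H$ are handled correctly.
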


\begin{lemma}\label{lem: genus one}
For $J\in \mathcal{J}_{\mathrm{reg}}(M(n); dH-\sum_{i=1}^{n}e_i E_i)$, let $u \colon (\Sigma,j) \rightarrow (M(n),J)$ be a non-constant smooth simple $J$-holomorphic curve with $[u(\Sigma)]=dH-\sum_{i=1}^{n}e_i E_i$. 
If $0 \leq d \leq 3$ and $g(\Sigma)=1$, then we have $d =3$ and $e_i \in \{ 0,1\}$. 
\end{lemma}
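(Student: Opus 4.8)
The plan is to use the adjunction formula (Theorem~\ref{thm: adjunction}) together with the previous lemma. Since $u$ is a simple $J$-holomorphic curve with smooth connected domain $(\Sigma, j)$ of genus $1$, the normalization is $\Sigma$ itself, so $\chi(\tilde\Sigma) = \chi(\Sigma) = 0$. The adjunction formula then reads $2\delta(u) = [u(\Sigma)]^2 - c_1(TM(n))([u(\Sigma)])$. Writing $A = dH - \sum_i e_i E_i$, one computes $A^2 = d^2 - \sum_i e_i^2$ and $c_1(TM(n))(A) = -K_n \cdot A = 3d - \sum_i e_i$, using $c_1(TM(n)) = -K_n = 3H - \sum_i E_i$ and the intersection form $H^2 = 1$, $E_i^2 = -1$, $H \cdot E_i = 0$. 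Hence
\begin{equation*}
	2\delta(u) = d^2 - \sum_{i=1}^{n} e_i^2 - 3d + \sum_{i=1}^{n} e_i = d(d-3) - \sum_{i=1}^{n} e_i(e_i - 1).
\end{equation*}
Since $\delta(u) \geq 0$, we get $d(d-3) \geq \sum_{i=1}^{n} e_i(e_i-1)$.

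Now I would invoke Lemma~\ref{lem: positivity}: if $[u(\Sigma)]$ were $H$ or some $E_i$, then $u$ would represent a sphere class, but a simple $J$-holomorphic curve of genus $1$ cannot represent such a class (by automatic genus considerations, or more directly because the adjunction inequality forces $\delta(u) < 0$ in those cases); in any event, assuming $[u(\Sigma)] \neq H, E_i$, Lemma~\ref{lem: positivity} gives $d > 0$ and $e_i \geq 0$ for all $i$. For $e_i \geq 0$, the quantity $e_i(e_i-1)$ is $\geq 0$ unless $e_i = 0$, in which case it is $0$; it equals $0$ exactly when $e_i \in \{0,1\}$ and is $\geq 2$ as soon as $e_i \geq 2$. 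On the other hand, for $0 \leq d \leq 3$ with $d > 0$, the value $d(d-3)$ is $-2$ for $d=1$, $-2$ for $d=2$, and $0$ for $d=3$. So the inequality $\sum_i e_i(e_i-1) \leq d(d-3)$ forces $d = 3$ (otherwise the right side is negative while the left side is $\geq 0$), and then $\sum_i e_i(e_i-1) \leq 0$, which combined with $e_i \geq 0$ yields $e_i(e_i-1) = 0$, i.e.\ $e_i \in \{0,1\}$ for every $i$. This is exactly the claim.

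The one point needing a little care is ruling out the exceptional classes $[u(\Sigma)] = H$ or $E_i$ so that Lemma~\ref{lem: positivity} applies and gives $e_i \geq 0$; I would handle this by noting that these classes are represented by embedded spheres, and a simple genus-$1$ curve in such a class would violate the adjunction formula (the right-hand side $[u(\Sigma)]^2 - c_1(TM(n))([u(\Sigma)])$ equals $1 - 3 = -2$ for $H$ and $-1 - 1 = -2$ for $E_i$, forcing $\delta(u) = -1 < 0$, impossible). Alternatively, since the hypotheses of Lemma~\ref{lem: positivity} explicitly exclude $[u(\Sigma)] = H, E_i$ for a genus-$\leq 1$ simple curve, this exclusion is automatic here. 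I expect no real obstacle: the whole argument is a short computation with the adjunction formula plus the sign analysis of $d(d-3)$ versus $\sum_i e_i(e_i-1)$; the mild subtlety is simply confirming that the genus-$1$ hypothesis is incompatible with the excluded sphere classes, which the adjunction formula settles immediately.
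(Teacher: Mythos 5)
Your proof is correct, and the core idea---apply the adjunction formula to get $2\delta(u) = d(d-3) - \sum_i e_i(e_i-1)$ and then analyze the signs---is exactly the paper's. The route differs in one small way: the paper observes that $e_i(1-e_i) \leq 0$ holds for \emph{all} integers $e_i$ (not just $e_i \geq 0$), so $d(d-3)$ and $\sum_i e_i(1-e_i)$ are each $\leq 0$, and $\delta(u) \geq 0$ then forces both to vanish, giving $d \in \{0,3\}$ and $e_i \in \{0,1\}$ with no appeal to Lemma~\ref{lem: positivity}; the case $d=0$ is then ruled out at the end via Corollary~\ref{cor: positivity}. You instead invoke Lemma~\ref{lem: positivity} upfront to get $d>0$ and $e_i \geq 0$ and then argue $\sum e_i(e_i-1) \geq 0$. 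Both are valid, but the paper's version uses less machinery up front (no need to establish $e_i\ge0$ before the sign argument), while yours dispatches $d=0$ and $d=3$ in one stroke. Your aside ruling out $[u(\Sigma)] = H, E_i$ via adjunction (showing $\delta<0$ for genus $1$) is correct and is the right way to ensure Lemma~\ref{lem: positivity} is applicable; note that in the paper's version this check is unnecessary because the sign argument proceeds without that lemma.
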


\begin{proof}
By the formula (\ref{genus formula}) in Theorem \ref{thm: adjunction}, we have 
$$
	2\delta(u)=d(d-3)+\sum_{i=1}^{n}e_i(1-e_i). 
$$
As $\delta \geq 0$, $d(d-3)\leq0$ and $\sum_{i=1}^{n}e_i(1-e_i) \leq 0$, we have $d \in \{0,3\}$ and $e_i \in \{0,1\}$. 
If $d=0$, then Corollary \ref{cor: positivity} implies that $[u(\Sigma)]=E_{i_0}$ for some $i_0$ and $g(\Sigma)=0$, which is a contradiction. 
\end{proof}

\begin{lemma}\label{lem: simpleness}
Let $(J_{\nu})$ be a sequence of almost complex structures in $\mathcal{J}_{\textrm{reg}}^{*}(n;\bm{z})$ and $(u_{\nu})$ a sequence of embedded $J_{\nu}$-holomorphic curves of genus $1$ in the class $-K_n$ passing through generic $8-n$ points $\bm{z}$ in $M(n)$. 
Suppose that $J_{\nu} \rightarrow J_{\infty} \in \mathcal{J}_{\textrm{reg}}^{*}(n; \bm{z})$ in the $C^{0}$-topology and $u_{\nu} \rightarrow u_{\infty}$ in the $C^{0}$-topology. 
The limit stable curve $u_{\infty}$ is written as $(u_{\infty, 1}, \ldots, u_{\infty, N})$, where each $u_{\infty, a}$ is a $J_{\infty}$-holomorphic curve defined on a respective irreducible component $\tilde{\Sigma}_{\infty, a}$ of the limit nodal curve $\Sigma_{\infty}$.
Then, all non-constant curves $u_{\infty,a}$ are simple. 
\end{lemma}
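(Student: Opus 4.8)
The plan is to argue by contradiction, the engine being a dimension count for the underlying simple curves of the components of the Gromov limit; genericity of $\bm z$ is what forbids the low-degree curves that a multiple cover would force into existence.

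So assume some non-constant component $u_{\infty,a_0}$ is not simple, and factor it as $u_{\infty,a_0}=v\circ\phi$ with $\phi$ a holomorphic branched covering of degree $k\ge 2$ and $v$ simple; set $B:=[\mathrm{im}\,v]$, so $[u_{\infty,a_0}]=kB$. Since $u_\infty$ lies in $\overline{\M}_{1,8-n}(-K_n;J_\infty;\bm z)$, its nodal domain has arithmetic genus $1$: every irreducible component has genus $0$ or $1$, at most one (the \emph{principal} component) has genus $1$, $\sum_a[u_{\infty,a}]=-K_n$ over the non-constant components, and $c_1(TM(n))(-K_n)=(-K_n)^2=9-n$; by Riemann--Hurwitz the domain of $v$ has genus $0$ or $1$, the latter only if $\phi$ is unbranched and $\tilde\Sigma_{\infty,a_0}$ is principal. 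Because $\bm z$ misses the nodes, after transferring a point that lies on a ghost component to the non-constant component carrying the adjacent node, each $z_i$ lies on the image of exactly one non-constant component; let $m_a=|\bm z_a|$ be the number of points on the $a$-th one, so $\sum_a m_a=8-n$, and note that its underlying simple curve $v_a$ (of class $B_a$, domain genus $h_a$) has the same image and so passes through $\bm z_a$.

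Granting Fredholm regularity of each $v_a$ for $J_\infty$, the constrained moduli space $\M_{h_a,m_a}(B_a;J_\infty;\bm z_a)$ is a manifold of complex dimension $c_1(TM(n))(B_a)+h_a-1-m_a$ containing $v_a$, so $c_1(TM(n))(B_a)\ge 1+m_a-h_a\ge 0$ for every non-constant $a$. Summing and using $\sum_a k_a\,c_1(TM(n))(B_a)=9-n$, $\sum_a m_a=8-n$, $\sum_a h_a\le 1$, $k_a\ge1$ and $k_{a_0}\ge2$ gives $2\ge N+(k_{a_0}-1)\,c_1(TM(n))(B_{a_0})$, with $N$ the number of non-constant components. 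If $c_1(TM(n))(B_{a_0})\ge1$ this forces $N=1$, so $-K_n=kB$ with $k\ge2$; but $-K_n=3H-\sum_{j=1}^nE_j$ is primitive for $1\le n\le8$, and for $n=0$ it makes $v$ a line through $8$ generic points, which is impossible (negative expected dimension, and for $v$ of genus $1$ even $2\delta(v)=H^2-c_1(TM(0))(H)=-2<0$ contradicts Theorem \ref{thm: adjunction}). If $c_1(TM(n))(B_{a_0})=0$, a simple $J_\infty$-holomorphic sphere of that class is excluded (moduli dimension $-1$), so $v$ is a torus, $\phi$ is unbranched and $a_0$ is principal; writing $-K_n=kB+\beta$ with $\beta$ the sum of the remaining (rational) classes, $(-K_n)\cdot B=c_1(TM(n))(B)=0$ gives $\beta\cdot B=-kB^2$, positivity of intersections (Theorem \ref{thm: positivity}) gives $\beta\cdot B\ge0$, and Theorem \ref{thm: adjunction} gives $B^2=2\delta(v)\ge0$; hence $B^2=0$, $v$ is embedded and $B\cdot B_a=0$ for every other component $a$. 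Since $u_\infty$ is connected, $\mathrm{im}\,v$ meets $\mathrm{im}\,v_{a_1}$ for some $a_1\ne a_0$, and then positivity forces $B\cdot B_{a_1}\ge1$ --- a contradiction.

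The step I expect to be the main obstacle is securing the Fredholm regularity used above for the single almost complex structure $J_\infty$. For the rational components it is exactly the defining property of $\mathcal J^{*}_{\mathrm{reg}}(n;\bm z)$ applied to the decomposition $-K_n=B_a+(-K_n-B_a)$, which is non-trivial because $B_a$ has positive $\omega_0$-area and no proper effective multiple of $-K_n$ is an effective summand of $-K_n$. For a principal \emph{elliptic} component $\mathcal J^{*}_{\mathrm{reg}}$ gives nothing directly and automatic regularity (Theorem \ref{thm: HLS}) would require the underlying torus to be immersed with $c_1(TM(n))(\cdot)>m$; the way around this is that, as the computation shows, whenever $a_0$ itself is a multiply covered elliptic component the contradiction is obtained from adjunction and positivity alone, with no regularity for it needed --- so one first disposes of that case directly and then runs the dimension count in the purely rational situation that remains. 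Making this dichotomy airtight (handling a simple elliptic component coexisting with a multiply covered rational one, and chains of ghost components in the connectedness step) is where the real bookkeeping of the proof lies.
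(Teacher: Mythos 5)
Your proof follows the same broad strategy as the paper's --- a dimension count for the underlying simple curves of the Gromov limit, producing a numerical contradiction with the $8-n$ point constraints --- but you organize the arithmetic differently and the result is arguably cleaner. The paper fixes the component with multiplicity $\geq 2$, observes $d_1 \in \{0,1\}$, and runs an explicit case-by-case check that $\sum_a d_a(d_a+3) - \sum_i\sum_a e_{a,i}(e_{a,i}+1) < 2(8-n)$, using the ``key observation'' $\sum_a e_{a,i_0}(e_{a,i_0}+1) \geq 2$ and Corollary~\ref{cor: positivity} along the way. You instead sum the inequalities $c_1(TM(n))(B_a) \geq 1 + m_a - h_a$ directly to land on a single global bound $2 \geq N + (k_{a_0}-1)\,c_1(TM(n))(B_{a_0})$, then split by the value of $c_1(TM(n))(B_{a_0})$: $\geq 1$ is killed by primitivity of $-K_n$, while $=0$ is killed by adjunction, positivity of intersections, and connectedness. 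This buys a shorter case analysis and makes the role of primitivity transparent; the paper's version works entirely with the genus bound from adjunction ($2k_a \leq B_a^2 + c_1(TM(n))(B_a)$), so it never needs to track the genus $h_a$ of the underlying simple curves component by component.

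There is one point where the proposal slightly overclaims. You assert that when the multiply covered component is elliptic the contradiction comes ``from adjunction and positivity alone, with no regularity for it needed.'' That is not quite right: your dichotomy tacitly assumes $c_1(TM(n))(B_{a_0}) \geq 0$, which for a simple \emph{torus} $v$ comes only from the moduli-space dimension count, hence from Fredholm regularity of $v$ at $J_\infty$. Adjunction and positivity by themselves yield only the opposite inequality ($2\delta(v) = B^2 - c_1(TM(n))(B)$ together with $(-K_n)\cdot B = kB^2 + \beta\cdot B$, $\beta\cdot B \geq 0$, $k \geq 2$ force $B^2 \leq 0$ and $c_1(TM(n))(B) \leq B^2 \leq 0$); they do not exclude $c_1(TM(n))(B) < 0$, and if that were possible the final connectedness contradiction would not arise. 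So the elliptic Fredholm regularity cannot be dispensed with; one really needs $J_\infty$ to be regular for simple elliptic curves as well as for spheres. That said, this is exactly the same hypothesis the paper uses implicitly --- Corollary~\ref{cor: positivity} and Lemma~\ref{lem: genus one} are stated under $J \in \mathcal{J}_{\mathrm{reg}}(M(n);\cdot)$, which covers all genera, even though the three listed bullet points defining $\mathcal{J}^{*}_{\mathrm{reg}}(n;\bm{z})$ only mention spherical classes. Both arguments therefore require $\mathcal{J}^{*}_{\mathrm{reg}}(n;\bm{z})$ to be cut down by the additional genus-one regularity condition; your write-up would be airtight if that requirement were made explicit rather than argued away.
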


\begin{proof}
For notational convenience, set $C_{\infty} \coloneqq u_{\infty}(\Sigma_{\infty})$ and $C_{\infty,a} \coloneqq u_{\infty,a}(\tilde{\Sigma}_{\infty,a})$.
Suppose that $[C_{\infty, a}]=d_{a}H-\sum_{i=1}^{n}e_{a,i}E_{i}$. 
Let $m_a \in \Z_{\geq 0}$ be the multiplicity of $C_{\infty,a}$ and 
$k_a\in \Z_{\geq 0}$ the number of points of $C_{\infty,a} \cap \bm{z}$. 
In view of the dimension of the moduli space, for each $a$, we have $2k_a \leq d_{a}(d_{a}+3)-\sum_{i=1}^{n}e_{a,i}(e_{a,i}+1)$. 
We will show that if there exists $m_a \geq 2$, then we have
$$
	2\sum_{a=1}^{N} k_{a} \leq \sum_{a=1}^{N}d_{a}(d_{a}+3)-\sum_{i=1}^{n}\left( \sum_{a=1}^{N}e_{a,i}(e_{a,i}+1) \right) <2(8-n), 
$$
which contradicts the fact that $C_{\infty}$ passes through the fixed $8-n$ points $\bm{z}$.

Observe that 
$
	\sum_{a=1}^{N}e_{a,i_{0}}(e_{a,i_{0}}+1) \geq 2 
$
for any fixed $i_{0}$. 
Indeed,  
if all $e_{a,i_{0}}$ are $0$ or $-1$, then $\sum_{a=1}^{N}m_a e_{a,i_{0}} \leq 0<1$, contrary to the fact that $\sum_{a=1}^{N}m_a e_{a,i_{0}}=1$. 
Hence, there exists $e_{a_0,i_{0}}$ that is not $0$ or $-1$, and 
we have 
\begin{align}\label{inq: e}
\sum_{a=1}^{N}e_{a,i_{0}}(e_{a,i_{0}}+1) \geq 1 \cdot (1+1)=2.
\end{align} 

In what follows, without loss of generality, we may assume that $m_1 \geq 2$. 

\noindent
\textbf{Case 1.} $d_1=0$. 

In this case, by Corollary \ref{cor: positivity}, we have $[C_{\infty,1}]=E_{i_{0}}$ for some $i_{0}$. 
Hence we obtain 
\begin{align}\label{inq: e2}
	\sum_{2 \leq a \leq N} m_{a}e_{a,i_{0}}=1-m_{1}e_{1,i_{0}} \geq 3
\end{align}
for the fixed $i_0$, which shows that for some $a_{0} \geq 2$,
$$
	e_{a_{0}, i_{0}}>0.
$$
Thus, it follows from Corollary \ref{cor: positivity} again that $d_{a_{0}}$ 
must be positive. 
This leads to the following two cases. 

\noindent
\textbf{Case 1-(i).} $d_{a_{0}} =1,2$. 

In this case, observe that $\sum_{a=1}^{N}d_a(d_a+3) \leq 14$. 
Thus, 
\begin{align*}
	\sum_{a=1}^{N}d_a(d_a+3)-\sum_{i=1}^{n}\sum_{a=1}^{N}e_{a,i}(e_{a,i}+1) \leq  \ 14-2n<16-2n.
\end{align*}

\noindent
\textbf{Case 1-(ii).} $d_{a_0}=3$. 

As $d_{a_0}=3$, we have $m_{a_0}=1$ and hence $d_{a}=0$ for all $a \neq a_0$. 
Combining Corollary \ref{cor: positivity} with the inequality (\ref{inq: e2}) shows that  $e_{a, i_{0}} \in \{-1,0\}$ for all $a \neq a_0$ and $e_{a_0, i_0} \geq 3$.
Then, with the inequality (\ref{inq: e}) we have
\begin{align*}
	&\ \sum_{a=1}^{N}d_a(d_a+3)-\sum_{a=1}^{N}\sum_{i=1}^{n}e_{a,i}(e_{a,i}+1) \\
	 \leq &\
	\sum_{a=1}^{N}d_a(d_a+3)-\sum_{i=1, i \neq i_0}^{n}\sum_{a=1}^{N} e_{a,i}(e_{a,i}+1)-\sum_{a=1}^{N}e_{a,i_0}(e_{a, i_0}+1) \\
	 \leq & \ 3 \cdot (3+3) - (n-1) \cdot 2 - 3(3+1) = 8-2n<16-2n. 
\end{align*}
This completes Case 1.

\noindent
\textbf{Case 2.} $d_1=1$. 

Notice that $2 \leq m_{1} \leq 3$ in this case. 
Suppose that $m_1=2$. 
As $d_{a} \geq 0$ for any $a$ by Lemma \ref{lem: positivity}, there exists a unique $a_0>1$ satisfying $d_{a_{0}}=1$, and the rest of $d_a$ must be $0$. 
Hence, we obtain $\sum_{a}d_{a}(d_{a}+3) \leq 8$, which also holds for the case $m_1=3$. 
Therefore, we conclude that 
$$
	\sum_{a=1}^{N}d_{a}(d_{a}+3)-\sum_{i=1}^{n}\left( \sum_{a=1}^{N}e_{a,i}(e_{a,i}+1)\right) \leq 8- 2n < 16-2n, 
$$
which finishes the proof of the lemma.
\end{proof}

\begin{lemma}\label{lem: N=1}
Let $u_{\infty}=(u_{\infty,1}, \ldots, u_{\infty,N}) \colon (\Sigma_{\infty}, j_{\infty}) \rightarrow (M_{n}, J_{\infty})$ be the limit curve in Lemma \ref{lem: simpleness}. Then, each $u_{\infty,a}$ is a non-constant curve that is either an embedding of a genus $1$ curve or a nodal sphere with one node. 
In particular, $N=1$. 
\end{lemma}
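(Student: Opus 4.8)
The plan is to read off the shape of $u_{\infty}$ from three ingredients: (a) the combinatorics of the dual graph $\Gamma$ of the limit nodal curve $\Sigma_{\infty}$, (b) the homology classes of the components together with Lemmas~\ref{lem: positivity} and~\ref{lem: genus one}, and (c) the adjunction formula, Theorem~\ref{thm: adjunction}. First I would record the constraints. By Gromov compactness $\Sigma_{\infty}$ is connected of arithmetic genus $1$, so, writing $g_{a}$ for the geometric genus of the component carrying $u_{\infty,a}$, we have $\sum_{a}g_{a}+b_{1}(\Gamma)=1$; in particular $\sum_{a}g_{a}\le 1$. By Lemma~\ref{lem: simpleness} every non-constant $u_{\infty,a}$ is simple (no multiple covers, distinct images), and their images sum to $-K_{n}$ in homology. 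Since $J_{\infty}\in\mathcal{J}_{\mathrm{reg}}^{*}(n;\bm{z})$, every non-constant component is Fredholm regular, so the moduli space containing $u_{\infty,a}$ is nonempty of its expected dimension; hence, with $k_{a}$ the number of the $8-n$ points lying on $C_{\infty,a}\coloneqq u_{\infty,a}(\tilde{\Sigma}_{\infty,a})$,
\[
	k_{a}\le c_{1}(TM(n))([C_{\infty,a}])+g_{a}-1 .
\]

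Next I would bound the number $N'$ of non-constant components. Since $\bm{z}$ is contained in the union of the $C_{\infty,a}$, we have $\sum_{a}k_{a}\ge 8-n$. Summing the displayed inequality over the non-constant components and using $\sum_{a}[C_{\infty,a}]=-K_{n}$ and $c_{1}(TM(n))(-K_{n})=9-n$ gives $8-n\le(9-n)+\sum_{a}g_{a}-N'$, so $N'\le 1+\sum_{a}g_{a}\le 2$. To rule out $N'=2$, note that then $N'=2\le 1+\sum_{a}g_{a}$ forces $\sum_{a}g_{a}=1$, so exactly one component, say $C_{\infty,1}$, has geometric genus $1$ and the other geometric genus $0$. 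Write $[C_{\infty,1}]=d_{1}H-\sum_{i}e_{1,i}E_{i}$. By Lemma~\ref{lem: positivity} the $H$-coefficient of $[C_{\infty,2}]$ is nonnegative unless $[C_{\infty,2}]$ is $H$ or some $E_{i}$, so in every case $d_{1}\le 3$; then Lemma~\ref{lem: genus one} forces $d_{1}=3$ and $e_{1,i}\in\{0,1\}$. Hence $[C_{\infty,2}]$ has vanishing $H$-coefficient, so by Lemma~\ref{lem: positivity} again $[C_{\infty,2}]=E_{i_{0}}$ for some $i_{0}$, whence $[C_{\infty,1}]=-K_{n}-E_{i_{0}}$ has $E_{i_{0}}$-coefficient $2$, contradicting $e_{1,i}\in\{0,1\}$. (If instead $[C_{\infty,1}]$ were $H$ or some $E_{i}$, or if $d_{1}\le 2$, one contradicts $g_{1}=1$ or Lemma~\ref{lem: genus one} directly.) Therefore $N'=1$, and since ghost components carry the zero class and $u_{\infty,1}$ is simple, $[C_{\infty,1}]=-K_{n}$.

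Finally I would establish $N=1$ and identify the type of $u_{\infty,1}$. Any constant component would sit in a tree of ghost bubbles attached to the unique non-constant component, and a valence count in $\Gamma$ — using that a genus-$0$ ghost component needs at least three special points, that for generic $\bm{z}$ no marked point lies on a ghost, and that in a Gromov limit the bubble trees attached at a point are trees — shows that no ghost component can occur, so $N=1$. Once $N=1$, the identity $g_{1}+b_{1}(\Gamma)=1$, in which $b_{1}(\Gamma)$ is the number of self-nodes of the sole component, leaves exactly two cases: $g_{1}=1$ with smooth domain, or $g_{1}=0$ with domain a sphere carrying one self-node. Applying Theorem~\ref{thm: adjunction} to the simple curve $u_{\infty,1}$ in class $-K_{n}$ gives
\[
	2\delta(u_{\infty,1})=(-K_{n})^{2}-c_{1}(TM(n))(-K_{n})+\chi(\tilde{\Sigma}_{\infty,1})=\chi(\tilde{\Sigma}_{\infty,1}),
\]
which is $0$ in the first case (so $u_{\infty,1}$ is an embedding of a smooth genus $1$ curve) and $2$ in the second; there $\delta(u_{\infty,1})=1$ is entirely accounted for by the self-node, forcing the two branches there to be transverse and the normalization to be an embedding, i.e. $u_{\infty,1}$ is a nodal sphere with one node.

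I expect the last step — the exclusion of ghost components — to be the main obstacle: it is the one point that genuinely requires disentangling the dual-graph combinatorics (the arithmetic-genus constraint together with stability of the domain and the point conditions) rather than homology alone, whereas everything else is bookkeeping with the lemmas and adjunction formula already in hand.
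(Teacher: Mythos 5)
Your overall strategy tracks the paper's quite closely for the bulk of the argument: bounding the number $N'$ of non-constant components by summing the expected-dimension inequalities against the $8-n$ point constraints, ruling out $N'=2$ via Lemma~\ref{lem: positivity}, Corollary~\ref{cor: positivity} and Lemma~\ref{lem: genus one}, and then pinning down the shape of the remaining component with the adjunction formula. That portion is essentially a correct reproduction. (One small addendum: to conclude that $\delta(u_{\infty,1})=1$ forces a single transverse self-intersection rather than, say, a cusp, the paper invokes the property built into $\mathcal{J}_{\mathrm{reg}}^{*}(n;\bm{z})$ that $J_\infty$-holomorphic spheres in the class $-K_n$ through $\bm{z}$ have only nodal singularities; your sketch should cite this rather than just the $\delta$-count.)

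The genuine gap is in the ghost exclusion. You assert two things: that for generic $\bm{z}$ no marked point lies on a ghost, and that the ghost components form trees attached to the non-constant component. Neither is correct here. Marked points certainly can sit on constant components in a Gromov limit (the ghost just contracts to the point $z_i$), so you cannot wave that possibility away by genericity; you must confront it. And because the arithmetic genus of $\Sigma_\infty$ is $1$, the dual graph need \emph{not} be a tree: if the unique non-constant component is a sphere, a ghost sphere can be attached to it at \emph{two} nodes, creating a cycle (this is precisely the configuration where the self-node of the nodal sphere is ``pulled apart'' by a ghost, as in Figure~\ref{fig: nodal sphere}). Your ``valence count'' therefore does not cover all cases. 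The paper's actual argument is: if the non-constant component has genus $1$, then the dual graph \emph{is} a tree, so some ghost leaf must carry two marked points, which map to a single point of $M$ — contradicting the distinctness of the $z_i$. If instead the non-constant component is a sphere, the only configuration avoiding a two-marked-point ghost leaf is the two-sphere cycle of Figure~\ref{fig: nodal sphere}, which the paper kills by showing the relevant moduli space is empty (negative expected dimension, or, when $n=8$, no room for the required marked point at all). Your proposal never addresses this cycle case, so the claimed exclusion of constant components does not go through as written.
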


\begin{proof}
We shall use the same notations as in the proof of Lemma \ref{lem: simpleness} and call each $u_{\infty, a}$ a \textit{component} of $u_{\infty}$. 
The following argument is inspired by the proof of \cite[Theorem 2]{Bar}. 

First assume that all components $u_{\infty,a}$ are non-constant. 
Let $k_{a}$ be the number of points of $\bm{z}$ where $C_{\infty,a}$ passes, and put $\bm{z}_{a}=\bm{z} \cap C_{\infty,a}$. 
Let $\mathcal{M}^{*}_{g_a,k_a}(d_{a}H-\sum_{i=1}^{n}e_{a,i}E_{i}; J_{\infty}; \bm{z}_{a})$ be the subspace of $\mathcal{M}_{g_a,k_a}(d_{a}H-\sum_{i=1}^{n}e_{a,i}E_{i}; J_{\infty}; \bm{z}_{a})$ consisting of simple $J_{\infty}$-holomorphic curves. 
The expected (real) dimensions of the moduli spaces $\mathcal{M}^{*}_{0,k_a}(d_{a}H-\sum_{i=1}^{n}e_{a,i}E_{i}; J_{\infty}; \bm{z}_{a})$ and $\mathcal{M}^{*}_{1,k_a}(d_{a}H-\sum_{i=1}^{n}e_{a,i}E_{i}; J_{\infty}; \bm{z}_{a})$ are given by 
\begin{eqnarray*}
	& & 2\left( \frac{\chi(\tilde{\Sigma}_{\infty, a})}{2}\dim_{\C}M+c_{1}(M)\left(d_{a}H-\sum_{i=1}^{n}e_{a,i}E_{i}\right) \right)-3\chi(\tilde{\Sigma}_{\infty, a})-2k_a \\ 
	& = & \begin{cases}
			2\left(-1+3d_{a}-\sum_{i=1}^{n}e_{a,i}-k_a\right) & \textrm{if}\ \ g(\tilde{\Sigma}_{\infty, a})=0, \\ 
			2\left(3d_{a}-\sum_{i=1}^{n}e_{a,i}-k_a\right) & \textrm{if} \ \ g(\tilde{\Sigma}_{\infty, a})=1.
			\end{cases}
\end{eqnarray*}
Since $J_{\infty}$ is generic, this shows that the moduli space $\mathcal{M}^{*}_{0,k_a}(d_{a}H-\sum_{i=1}^{n}e_{a,i}E_{i}; J_{\infty}; \bm{z}_{a})$  (resp. $\mathcal{M}^{*}_{1,k_a}(d_{a}H-\sum_{i=1}^{n}e_{a,i}E_{i}; J_{\infty}; \bm{z}_{a})$) is non-empty only if 
$k_a \leq 3d_{a}-\sum_{i=1}^{n}e_{a,i}-1$ (resp. $k_a\leq 3d_{a}-\sum_{i=1}^{n}e_{a,i}$). 
For each point $z$ of $\bm{z}$, more than one curve $C_{\infty,a}$ may pass through $z$. 
Hence if all $C_{\infty, a}$ are spheres, we have 
$$
	8-n \leq \sum_{a=1}^{N}k_{a} \leq \sum_{a=1}^{N}\left(3d_{a}-\sum_{a=1}^{n}e_{a,i}-1\right)=9-n-N, 
$$
which implies that $N=1$. 
Suppose that one of $C_{\infty,a}$, say $C_{\infty,1}$, has genus $1$. 
In this case, 
$$
	8-n \leq \sum_{a=1}^{N}k_{a} \leq \left( 3d_{1}-\sum_{i=1}^{n}e_{1,i} \right)+\sum_{a=2}^{N}\left(3d_{a}-\sum_{i=1}^{n}e_{a,i}-1\right)=9-n-(N-1), 
$$
which shows $N \leq 2$. 
Note that by Lemma \ref{lem: genus one}, $d_1=3$ and $e_{1,i} \in \{0,1\}$ for all $i$. 
If $N=2$, $d_2$ must be $0$, and Corollary \ref{cor: positivity} implies $[C_{\infty,2}]=E_{i_0}$ for some $i_0$. 
Hence, we have $e_{1,i_0} =2$, which is a contradiction. Thus, $N=1$. 
As the domain of $u_{\infty}=u_{\infty,1}$ is either a smooth curve of genus $1$ or a nodal sphere with one node, the lemma follows from the formula (\ref{genus formula}) and the definition of $\mathcal{J}_{\mathrm{reg}}^{*}(n;\bm{z})$. 

In general, some $u_{\infty, a}$ might be a constant curve, which does not happen actually: 
Suppose $u_{\infty}$ has a contant component. 
If a unique non-constant component is a genus $1$ curve, then there is a constant sphere $u_{\infty, a}$ having two marked points in $\bm{x}$ by the stability condition. 
However, these points are mapped to the same point of $\bm{z}$, a contradiction. 
Next assume that a unique non-constant curve, say $u_{\infty,1}$, is a nodal sphere with one node. 
In order to prevent $u_{\infty}$ from having a constant sphere with two marked points, the domain of $u_{\infty}$ needs to be a nodal Riemann surface $\Sigma_{\infty}$ that consists of two spheres $\Sigma_{\infty,1}$ and $\Sigma_{\infty, 2}$ glued together at two nodes; $\Sigma_{\infty,1}$ has $7-n$ marked points and $\Sigma_{\infty, 2}$ has one marked point; see Figure \ref{fig: nodal sphere}.
When $n=8$, there cannot be such a nodal Riemann surface with marked points in the first place; otherwise, the expected dimension of the moduli space of such $J_{\infty}$-holomorphic curves $u \colon \Sigma_{\infty} \rightarrow M$ is negative. 
This is contrary to the existence of $u_{\infty}$.
Therefore, we conclude that $u_{\infty}$ has no constant component.  
\end{proof}

\begin{figure}[ht]
	\centering
	\begin{overpic}[width=80pt,clip]{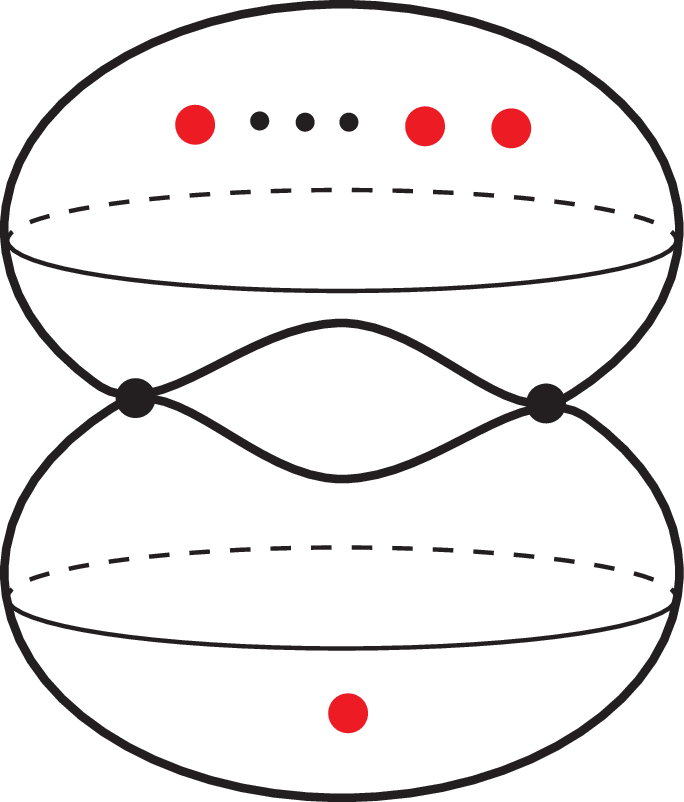}
	 \linethickness{1pt}
	\put(71, 87){$\Sigma_{\infty,1}$}  
	\put(71,0){$\Sigma_{\infty,2}$}
	\put(-16,43){$\Sigma_{\infty}$}
	\end{overpic}
	\caption{Nodal Riemann surface $\Sigma_{\infty}$ with two nodes. Red points indicate marked points: $\Sigma_{\infty,1}$ contains $7-n$ marked points; $\Sigma_{\infty,2}$ contains one marked point.}
	\label{fig: nodal sphere}
\end{figure}


\begin{proof}[Proof of Theorem \ref{thm: isotopy}]
Throughout this proof, we often denote an element $[\Sigma, j,u,\bm{x}]$ of $\overline{\M}_{1,8-n}(-K_n;J;\bm{z})$ just by $[u]$ for simplicity.

Take an $\omega_0$-tame almost complex structure $J$ on $M(n)$ making $S$ $J$-holomorphic and also choose $8-n$ points $\bm{z}$ on $S$. 
Notice that in general, $J \not \in \mathcal{J}^{*}_{\mathrm{reg}}(n;\bm{z})$. 
Thanks to the automatic regularity (Theorem \ref{thm: HLS}), the projection $\M_{1,8-n}(-K_n; \mathcal{J}_{\tau};\bm{z}) \rightarrow \mathcal{J}_{\tau}(M(n); \omega_0)$ is a submersion at a $J$-holomorphic curve $u \colon (\Sigma,j) \rightarrow M(n)$ parametrizing $S$. 
Hence one can slightly deform $([u],J)$ in the moduli space $\M_{1,8-n}(-K_n;\mathcal{J}_{\tau}; \bm{z})$ and obtain $([u'],J') \in \M_{1,8-n}(-K_n; \mathcal{J}_{\tau};\bm{z})$ with $J' \in \mathcal{J}^{*}_{\mathrm{reg}}(n;\bm{z})$ and $u'$ symplectically isotopic to $u$. 

The complex structure $J(n)$ may not be an element of $\mathcal{J}^{*}_{\mathrm{reg}}(n;\bm{z})$. 
Choose distinct $8-n$ points $\bm{z}'=\{z'_1,\ldots,z'_{8-n}\} \subset M(n)$ sufficiently close to $\bm{z}$ for which $J(n) \in \mathcal{J}^{*}_{\mathrm{reg}}(n; \bm{z}')$.
A result of Boothby \cite{Boothby} yields a Hamiltonian isotopy $(\phi_{t})_{t \in [0,1]}$ of $(M(n), \omega_0)$ such that $\phi_{0}=\mathrm{Id}$ and $\phi_{1}(z'_i)=z_i$, 
and we have the regular almost complex structure 
$$
	(\phi_{1})_{*}J(n) \coloneqq d\phi_1 \circ J(n) \circ (d\phi_1)^{-1}\in \mathcal{J}^{*}_{\mathrm{reg}}(n; \bm{z}).
$$ 
Thus, once we have a symplectic isotopy $(S_t)$ between $J'$- and $(\phi_{1})_{*}J(n)$-holomorphic curves, the composition of this isotopy and $\phi_{t}^{-1}(S_{1})$ gives a symplectic isotopy between $S$ and a $J(n)$-holomorphic curve, i.e., a complex curve. 

Now we may assume that $S$ is a $J$-holomorphic, and $J$ and $J(n)$ are elements of $\mathcal{J}_{\textrm{reg}}^{*}(n; \bm{z})$. 
The moduli space along a path $h(t)$ in $\mathcal{J}_{\textrm{reg}}^{*}(n; \bm{z})$ connecting $J$ and $J(n)$,
$$\M_{1,8-n}(-K_n;h;\bm{z})=\bigcup_{t \in [0,1]} (\{t\} \times \M_{1,8-n}(-K_n;h(t); \bm{z}))$$ 
is a $3$-dimensional manifold. 
Moreover, it follows from \cite[Theorem~4.5.1]{Wen_Lecture} (see alternatively \cite[Theorem~2.15]{WenBook} for a statement of more straightforward application) together with the automatic transversality (Theorem \ref{thm: HLS}) that the natural projection $p \colon \M_{1,8-n}(-K_n; h; \bm{z}) \rightarrow [0,1]$ 
is a submersion. 

Let $I \subset [0,1]$ be the set of $\tau \in I$ such that for any $t \leq \tau $, a smooth $J_{t} \coloneqq h(t)$-holomorphic curve $u_{t}$ exists and is symplectically isotopic to $S$.
Since $p$ is a submersion, $I$ is an open subset of $[0,1]$. 
Hence all we have to show is that $I$ is closed in $[0,1]$. Let $(t_{\nu})$ be an increasing sequence of $I$ converging to $\tau$. 
For each $t_{\nu}$, choose a $J_{t_{\nu}}$-holomorphic curve $u_{t_\nu}$ that passes through $\bm{z}$ and is symplectically isotopic to $S$. 
By the Gromov compactness, we may assume that $[u_{t_\nu}]$ converges to a stable $J_{\tau}$-holomorphic curve $[u_{\tau}]$. 
Since $J_{\tau} \in \mathcal{J}^{*}_{\mathrm{reg}}(n;\bm{z})$, Lemma \ref{lem: N=1} implies that if $u_{\tau}$ is singular, i.e., an element of $\overline{\M}_{1,8-n}(-K_n; J_{\tau}; \bm{z}) \setminus \M_{1,8-n}(-K_n; J_{\tau}; \bm{z})$, then it is a nodal sphere with one node. 
Now Theorem \ref{thm: Sikorav} proves that the moduli space of such nodal curves contributes a $1$-dimensional stratum to $\overline{\M}_{1,8-n}(-K_n;h;\bm{z})$. 
Thus, its complement in $\overline{\M}(-K_n; h; \bm{z})$ is still connected near $[u_{\tau}]$. 
It turns out that a $J_{\tau}$-holomorphic smoothing of $u_{\tau}$ is symplectically isotopic to $u_{t_\nu}$ for $\nu \gg 0$, in particular to $S$. 
Therefore, $\tau \in I$, and $I$ is a closed subset of $[0,1]$. 
As $[0,1]$ is connected, we have $I=[0,1]$. 
This concludes the existence of a symplectic isotopy between $S$ and a $J(n)$-holomorphic curve.

An anti-canonical divisor of $M(n)$ is very ample for $0 \leq n \leq 6$ \cite[Theorem V.4.6]{Hart}.
Hence, the assertion for two symplectic submanifolds of dimension $2$ in the class $-K_n$ for $0 \leq n \leq 6$ immediately follows from the above discussion and the lemma below.  
\end{proof}

\begin{lemma}
Let $(M, \omega,J)$ be a K\"{a}hler manifold and $L \rightarrow M$ a very ample holomorphic line bundle. 
Then, any two smooth divisors $D$ and $D'$ in the complete linear system $|L|$ are symplectically isotopic as symplectic submanifolds with respect to $\omega$. 
\end{lemma}

\begin{proof}
Since $J$ is compatible with $\omega$, any smooth divisor on $(M, \omega,J)$ is a symplectic submanifold. 
Therefore, it suffices to construct a path of smooth divisors connecting $D$ and $D'$. 
Furthermore, since the holomorphic line bundle $L \rightarrow M$ is very ample, the K\"ahler manifold $M$ can be embedded into $\CP^{N}$ via the associated linear system $|L|$. 
Then, the divisors $D$ and $D'$ are realized as the hyperplane sections $M \cap H_D$ and $M \cap H_{D'}$, respectively, for some complex hyperplanes $H_D$ and $H_{D'}$ in $\CP^N$.
In general, Bertini's theorem (see \cite[page~137]{GH} for example) tells us that the intersection of $M$ and a generic complex hyperplane in $\CP^N$ gives a \textit{smooth} divisor in $|L|$. (Notice that the base locus of $|L|$ is empty by very ampleness.) 
It is a standard fact that a complex hyperplane in $\CP^N$ can be regarded as an element of the dual projective space $(\CP^N)^{*} \cong \CP^N$; see \cite[page~15]{GH}. 
In view of this, the space $\mathcal{H}$ of complex hyperplanes giving smooth hyperplane sections of $M \subset \CP^N$ agrees with the complement of the \textit{dual variety} $M^{*}$ in $(\CP^N)^*$, which is defined as the closure of the set of complex hyperplanes tangent to $M$ at some point (see \cite{Tel} for the details). 
The dual variety $M^*$ is a subvariety of $(\CP^N)^*$ of complex codimension at least $1$; hence, the space $\mathcal{H}$ is path-connected. 
A path from $H_D$ to $H_{D'}$ in $\mathcal{H}$ provides a one-parameter family of smooth ample divisors connecting $D$ to $D'$, which gives the desired symplectic isotopy. 
\end{proof}

\section{Mapping class group relation}\label{section: MCG}

\subsection{Del Pezzo surfaces of degree $6$}\label{section: del Pezzo}

A vital ingredient of the proof of Theorem \ref{thm: relation} is del Pezzo surfaces of degree $6$. 
In this subsection, we review how they appear as ample divisors of complex $3$-manifolds following \cite{Fujita} and then study the symplectic nature of those divisors. 

\subsubsection{Del Pezzo surfaces as ample divisors on complex $3$-folds}


Let $X_{1} \coloneqq  \CP^1 \times \CP^1 \times \CP^1$ and $M_1$ a smooth complex hypersurface of tri-degree $(1,1,1)$ in $X_{1}$. 
Also, let $X_{2}$ be a smooth complex hypersurface of bi-degree $(1,1)$ in $\CP^2 \times \CP^2$ and 
$M_{2}$ the complete intersection of $X_{2}$ and another smooth complex hypersurface of the same bi-degree in $\CP^2 \times \CP^2$.
A straightforward computation with the adjunction formula shows that $(K_{M_{j}})^2=6$ for $j=1,2$, and hence 
$M_{j}$ is a del Pezzo surface of degree $6$. 
For future use, we define the complex hypersurface $B_{j}$ in $M_{j}$ to be the intersection 
of $M_{j}$ and another generic smooth complex hypersurface of the same degree in $X_{j}$. 
By definition, the normal bundle $N_{M_{j}/X_{j}}$ of $M_{j}$ in $X_{j}$ is isomorphic to $-K_{M_j}$ and has $c_{1}(N_{M_{j}/X_{j}}) = \PD([B_{j}]) \in H^{2}(M_{j}; \Z)$, which implies that $B_{j}$ is an anti-canonical divisor of $M_{j}$. 
Moreover, we find that the canonical bundle $K_{B_j}$ is trivial, and $B_{j}$ is a genus $1$ curve in $M_{j}$ with 
self-intersection number $6$. 
Therefore, $M_j$ is a del Pezzo surface of degree $6$.

\subsubsection{Symplectic nature of $M_1$ and $M_2$}\label{section: symplectic del Pezzo}

We endow $M_j$ with a symplectic structure as follows: 
Let $\Omega_{j}$ be a symplectic form on $X_{j}$ induced by the Fubini--Study form on the product of complex projective spaces. 
As $M_j$ is a hyperplane section of $X_j$, we have $[\Omega_{j}]=\PD([M_{j}]) \in H^{2}(X_j;\Z)$. 
Since $M_{j}$ is a complex submanifold of the K\"ahler manifold $(X_{j}, \Omega_j)$, the restriction $\omega_j \coloneqq \Omega_{j}|_{M_{j}}$ serves as a symplectic form on $M_j$.
Note that $[\omega_{j}]=\PD([B_{j}])$.

\begin{lemma}\label{lem: del Pezzo symplectomorphism}
Let $(M_{1}, \omega_{1})$ and $(M_{2}, \omega_{2})$ be del Pezzo surfaces of degree $6$ with symplectic structures as above. 
Then, $(M_1, \omega_1)$ and $(M_2, \omega_2)$ are symplectomorphic.
\end{lemma}

\begin{proof}
Since a del Pezzo surface of degree $6$ is unique up to biholomorphism (see \cite[Remark 24.4.1]{Manin}), 
one can take a biholomorphism $\phi \colon M_1 \rightarrow M_2$. 
The symplectic forms $\omega_{1}$ and $\phi^{*}\omega_{2}$ are compatible 
with the complex structure on $M_1$, and so is $(1-t)\omega_1+t\phi^{*}\omega_2$ for $t \in [0,1]$. 
As $[\omega_1]$ and $[\phi^{*}\omega_{2}]$ are Poincar\'e dual to an anti-canonical divisor, they are cohomologous. 
Hence, Moser's stability gives a symplectomorphism $\psi \colon (M_1, \omega_1) \rightarrow (M_1, \phi^{*}\omega_2)$. 
Thus, $\phi \circ \psi \colon (M_{1}, \omega_{1}) \rightarrow (M_{2}, \omega_{2})$ is the desired symplectomorphism. 
\end{proof}

Next we will see that the complements of some tubular neighborhoods of $B_1$ and $B_2$ are symplectomorphic with (strictly) contactomorphic boundaries. 
To begin with, we briefly review a symplectic model of a tubular neighborhood of a symplectic submanifold. 

Let $(M, \omega)$ be a closed integral symplectic manifold and $B$ a codimension $2$ symplectic submanifold. 
Suppose that $[\omega]=\PD([B])$. 
Take a complex line bundle $p \colon N \rightarrow B$ with $c_1(M)=[\omega|_B]$, which is isomorphic to the normal bundle of $B$ in $M$. 
For a Hermitian metric $\| \cdot \|$ on $N$, we choose a Hermitian connection $\nabla$ with curvature $-2\pi i \omega|_B$. 
Let $\alpha^{\nabla}$ denote the associated transgression $1$-form $\alpha^{\nabla}$  on $N \subset B$ defined by 
$\alpha^{\nabla}|_{u}(u) =0$, $\alpha^{\nabla}|_{u}(iu)=1/2\pi$ for every $u \in N \setminus B$, and 
$\alpha^{\nabla}|_{H^{\nabla}}=0$, where $H^{\nabla}$ is the horizontal distribution of $\nabla$. 
Define a symplectic form $\omega^{\nabla}$ on $ \{ u \in N \mid \|u\| < 1 \}$ to be 
$$
	\omega^{\nabla} = p^{*}(\omega|_{B})+d(r^2\alpha^{\nabla}),
$$ 
where $r$ is the radial coordinate along the fibers induced by $\| \cdot \|$. 
As $d\alpha^{\nabla}=-p^{*}(\omega|_B)$, we have $\omega^{\nabla} = d((r^{2}-1)\alpha^{\nabla})$, which is exact outside the zero-section. 
Set 
$$
	\lambda^{\nabla} = (r^{2}-1)\alpha^{\nabla}.
$$ 
Weinstein's symplectic tubular neighborhood theorem tells us that for some $\delta>0$, there is a symplectic embedding $\rho \colon (N_{\delta}, \omega^{\nabla}) \rightarrow (M, \omega)$ that maps the zero-section to $B$, where $N_{\delta}=\{ u \in N \mid \|u\| \leq \delta\}$.

\begin{remark}\label{rem: principal}
We can canonically regard $N_{\delta}$ as $P \times_{S^{1}} \D(\delta)$, where $\varpi \colon P \rightarrow B$ is the unit circle bundle of $N$ with respect to $\|\cdot \|$, $\D(\delta) \coloneqq \{z \in \C \mid |z| \leq \delta \}$ and $S^1=\R/2\pi\Z$ acts on $P \times \D(\delta)$ by 
$$
	e^{i \theta} \cdot (x, z)=(x \cdot e^{-i\theta}, e^{i\theta}z)
$$
for $e^{i\theta} \in S^1$, $(x,z) \in P \times \D(\delta)$. 
Under the identification of $N_{\delta}$ with $P \times_{S^{1}} \D(\delta)$, the symplectic form $\omega^{\nabla}$ corresponds to $$-\varpi^{*}(\omega|_B)+d(r^2 \alpha),$$ 
and $\lambda^{\nabla}$ corresponds to $(r^{2}-1)\alpha$. 
Here we set $\alpha \coloneqq \alpha^{\nabla}|_{P}$, which is a connection $1$-form on $P$ with $d\alpha=-\varpi^{*}(\omega|_{B})$, and $r$ is the radial coordinate on $\D(\delta)$.  
\end{remark}

Let us return to the discussion of the del Pezzo surfaces $M_1$ and $M_2$. 
For each $j =1,2$, take a holomorphic line bundle $p_j \colon \O(B_{j}) \rightarrow M_{j}$ with a generic section $s_j$ satisfying $s_{j}^{-1}(0)=B_j$. 
Applying the above construction to the bundle $p_j|_{B_{j}} \colon \O(B_{j})|_{B_j} \rightarrow B_j$, we find a closed tubular neigborhood of $B_j$ in $M_j$ that is symplectomorphic to the disc bundle $(N_{\delta_j}, \omega^{\nabla_j})$ of radius $\delta_j$, where $\nabla_j$ is a connection on $\O(B_{j})$ with curvature $-2\pi i \omega|_{B_j}$. 
Write $$\rho_j \colon (N_{\delta_j}, \omega^{\nabla_j}) \rightarrow (M_j, \omega_j)$$ for this symplectic embedding. 
Since $-2\pi i \omega|_{B_j}$ is the curvature of $\nabla_{j}$, 
we have $\omega_j=\frac{1}{4\pi}dd^{\C}\log\|s_j(x)\|^2)$ on $M_j \setminus B_j$. 
In this holomorphic setting, according to \cite[Section 7]{Biran}, we may arrange the embedding $\rho_j$ so that the pull-back $\rho_j^{*} (\frac{1}{4\pi}d^{\C}\log\|s_j(x)\|^2)$ coincides with $\lambda^{\nabla_j}$.

\begin{proposition}\label{prop: del Pezzo symplectomorphism}
For $j=1,2$, let $(M_j, \omega_j)$ be the del Pezzo surface of degree $6$ with the symplectic structure $\omega_j$ and let $B_j$ be the anti-canonical divisor as above. 
Also, let $s_j$ be a holomorphic section of the holomorphic line bundle $\O(B_j)$ with $s_j^{-1}(0)=B_j$. 
Then, there exists a symplectomorphism $\Phi \colon (M_1, \omega_1) \rightarrow (M_2, \omega_2)$ such that the following holds: 
\begin{enumerate}
	\item $\Phi(B_1) = B_2$.
	\item For some closed tubular neighborhoods $\nu_{M_j}(B_j)$ of $B_j$, $\Phi(\nu_{M_1}(B_1))=\nu_{M_2}(B_2)$ and $\Phi^{*}\left(\left(\frac{1}{4\pi}d^{\C}\log\|s_2\|^2\right)|_{\nu_{M_2}(B_2) \setminus B_2}\right)=\left(\frac{1}{4\pi}d^{\C}\log\|s_1\|^2\right)|_{\nu_{M_1}(B_1) \setminus B_1}$. 
	In particular, 
		$$\Phi^{*}\left(\left. \left(\frac{1}{4\pi}d^{\C}\log\|s_2\|^2\right)\right|_{\del \nu_{M_2}(B_2)}\right)=\left. \left(\frac{1}{4\pi}d^{\C}\log\|s_1\|^2\right)\right|_{\del \nu_{M_1}(B_1)},$$ 
	that is, $\Phi|_{\del \nu_{M_1}(B_1)}$ gives rise to a strict contactomorphism. 
\end{enumerate}

\end{proposition}

The proposition will be shown by combining Lemma \ref{lem: del Pezzo symplectomorphism} with the following lemma. 

\begin{lemma}\label{lem: nbhd}
Let $(M, \omega)$ be a closed integral symplectic manifold and $B$ a symplectic submanifold with $[\omega]=\PD([B])$. 
Let $\lambda_0$ and $\lambda_1$ be $1$-forms defined on $M \setminus B$ satisfying $d\lambda_1=d\lambda_2=\omega$.
Suppose that for each $j=0,1$, the closed tubular neighborhood of $B$ given by a symplectic embedding $\rho_j \colon (N_{\delta_j}, \omega^{\nabla_j}) \rightarrow (M, \omega)$ satisfies  ${\rho_j}^{*}\lambda_j=\lambda^{\nabla_j}$, where $\nabla_j$ is a connection on the normal bundle $N$ of $B$ in $M$ with curvature $-2\pi i \omega|_B$ and $N_{\delta_j} \subset N$ is the disc bundle of radius $\delta_j$ with respect to a Hermitian metric $\|\cdot \|_{j}$. 
Then, there exists a symplectomorphism $\Psi \colon (M, \omega) \rightarrow (M, \omega)$ such that the following holds: 
\begin{enumerate}
\item $\Psi(B)=B$. 
\item $\Psi^{*}\lambda_2=\lambda_1$ near $B$.
\end{enumerate}
\end{lemma}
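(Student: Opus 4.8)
The plan is to build the symplectomorphism $\Psi$ in two stages: first straighten out the two tubular neighborhood models so that they agree as \emph{symplectic} models of a neighborhood of $B$, and then correct the resulting Liouville form by a Moser-type argument. First I would compare the two hermitian metrics $\|\cdot\|_0$ and $\|\cdot\|_1$ on the normal bundle $N$. Both connections $\nabla_0,\nabla_1$ have the same curvature $-2\pi i\,\omega|_B$, so their associated transgression $1$-forms $\alpha^{\nabla_0},\alpha^{\nabla_1}$ differ by a closed (indeed, since both restrict to $d\theta/2\pi$ on fibers, an exact) $1$-form pulled back from $B$; after a gauge transformation of $N$ — i.e. a bundle automorphism covering $\mathrm{id}_B$ — I may assume $\nabla_0=\nabla_1=:\nabla$ and $\alpha^{\nabla_0}=\alpha^{\nabla_1}$. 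Rescaling the fiber coordinate then identifies $(N_{\delta_0},\omega^{\nabla_0})$ with $(N_{\delta_1},\omega^{\nabla_1})$ and carries $\lambda^{\nabla_0}$ to $\lambda^{\nabla_1}$, because $\omega^{\nabla}=d((r^2-1)\alpha^{\nabla})$ and $\lambda^{\nabla}=(r^2-1)\alpha^{\nabla}$ are determined by the same data. Composing with $\rho_1$ and $\rho_0^{-1}$ gives a symplectomorphism $\Psi_0$ from a neighborhood of $B$ in $M$ onto another such neighborhood, fixing $B$, with $\Psi_0^{*}\lambda_1=\lambda_0$ on the overlap. The point of doing this via the model is that it automatically respects the Liouville forms, not just the symplectic forms.

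Next I would promote $\Psi_0$, defined only near $B$, to a global symplectomorphism $\Psi$ of $(M,\omega)$ preserving $B$. For this I would use the fact that $\Psi_0$ is isotopic to the identity through embeddings of a smaller neighborhood $\nu'$ of $B$ (shrink the radius): indeed $\Psi_0$ covers an automorphism of $N$ isotopic to the identity through gauge transformations, and fiberwise rescalings are isotopic to the identity, so $\Psi_0$ lies in the identity component of symplectic embeddings of $(\nu',\omega)$ into $(M,\omega)$. An isotopy extension / Banyaga-type argument for compactly supported symplectic isotopies then extends this to an ambient Hamiltonian isotopy $\Psi_t$ of $(M,\omega)$ with $\Psi_1$ agreeing with $\Psi_0$ on a still smaller neighborhood of $B$; set $\Psi:=\Psi_1$. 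By construction $\Psi(B)=B$ and $\Psi^{*}\lambda_1=\lambda_0$ on some neighborhood of $B$, which is precisely condition (2) (note the statement's $\lambda_2$ is a typo for $\lambda_1$, consistent with the hypothesis $d\lambda_1=d\lambda_2=\omega$).

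The main obstacle I expect is the gauge-theoretic reduction to a common model: one has to be careful that the difference $\alpha^{\nabla_0}-\alpha^{\nabla_1}$ is not merely closed but can be killed by a \emph{gauge transformation} (a map $B\to S^1$, possibly only after passing to a simply-connected situation or checking that the relevant class in $H^1(B;\Z)$ vanishes), rather than by an arbitrary shift, since only gauge transformations act on the principal bundle and preserve the connection-form normalization $\alpha|_u(iu)=1/2\pi$. If $H^1(B;\R)\neq 0$ this requires an extra Moser step on the fibration $P\to B$ to absorb the pullback-from-$B$ discrepancy into the Liouville form while keeping $\omega^{\nabla}$ fixed; for the application $B$ is a $2$-torus, so $H^1(B;\R)\cong\R^2$ is nonzero and this subtlety is genuinely present. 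Once the two models are identified, the remaining isotopy-extension step is standard and causes no trouble since everything is compactly supported in a neighborhood of $B$.
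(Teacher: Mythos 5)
Your proposal follows essentially the same two-stage plan as the paper: first match the two tubular-neighborhood models by a symplectomorphism $\kappa$ preserving the zero-section and intertwining the Liouville forms $\lambda^{\nabla_0}$ and $\lambda^{\nabla_1}$, then globalize $\rho_1\circ\kappa\circ\rho_0^{-1}$ via an isotopy of symplectic embeddings (the paper interpolates between $\rho_0$ and $\rho_1\circ\kappa$ and cites Auroux's Proposition~4, rather than invoking an isotopy-extension theorem directly, but these are interchangeable). For the first stage the paper regards $(N_{\delta_j},\omega^{\nabla_j})$ as $P_j\times_{S^1}\D(\delta_j)$ and invokes Gray's theorem to produce a strict contactomorphism $(P_0,\alpha_0)\to(P_1,\alpha_1)$ covering a symplectomorphism of $(B,\omega|_B)$ --- precisely the ``extra Moser step'' you anticipate when $[\alpha^{\nabla_0}-\alpha^{\nabla_1}]\in H^1(B;\R)$ is non-integral, with the non-integral part absorbed as the flux of the base symplectomorphism. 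One correction to your exposition: the difference $\alpha^{\nabla_0}-\alpha^{\nabla_1}$ is closed and basic but not exact in general (it is pulled back from a class in $H^1(B;\R)$, and this pullback is injective here since the Euler class of $N$ is nonzero); your parenthetical calling it exact contradicts the subtlety you correctly raise two sentences later and should be deleted. But you did pinpoint the genuine issue --- $B$ is a genus-one curve in the application, so $H^1(B;\R)\neq 0$ --- and the fix you sketch is, in substance, the paper's.
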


\begin{proof}
Shrinking $N_{\delta_0}$, we may assume that $N_{\delta_0} \subset N_{\delta_1}$ and $\mathrm{Im}(\rho_0) \subset \mathrm{Im}(\rho_1)$. 
Also assume for a while that there exists a symplectic embedding $\kappa \colon (N_{\delta_0}, \omega^{\nabla_0}) \rightarrow (N_{\delta_1}, \omega^{\nabla_1})$ that preserves the zero-section and satisfies $\kappa^{*}\lambda^{\nabla_1}=\lambda^{\nabla_0}$; we will show this existence later. 
Consider the isotopy of symplectic embeddings $\rho'_t \colon (N_{\delta_0}, \omega^{\nabla_0}) \rightarrow (M, \omega)$ defined by 
$$
	\rho'_t(u) \coloneqq \rho_1(t \kappa(u)+(1-t) (\rho_1^{-1} \circ \rho_0)(u))
$$
for $t \in [0,1]$. 
Note that $\rho'_0=\rho_0$, $\rho'_1=\rho_1 \circ \kappa$ and ${\rho'}_1^{*}\lambda_1=\lambda^{\nabla_0}$. 
According to \cite[Proposition 4]{Aur}, this isotopy yields a family of symplectomorphisms $\Psi_t \colon (M, \omega) \rightarrow (M, \omega)$ such that $\Psi_t(B)=B$ for every $t \in [0,1]$, $\Psi_0=\mathrm{Id}$ and $\Psi_t|_{\mathrm{Im}\rho_0}=\rho'_t \circ \rho^{-1}_0$. 
At $t=1$, we have 
$\Psi^{*}_{1}(\lambda_1) = (\rho'_1 \circ \rho^{-1}_0)^{*}(\lambda_1)=(\rho^{-1}_0)^{*}\lambda^{\nabla_0}=\lambda_0$ on $\mathrm{Im}\rho'_1$.
Thus, $\Psi_1$ is the symplectomorphism we want.  

To complete the proof, we shall prove the existence of $\kappa$. 
As in Remark \ref{rem: principal}, we may regard $(N_{\delta_j}, \omega^{\nabla_{j}})$ as $(P_{j} \times_{S^1} \D(\delta_j), -\varpi_j^{*}(\omega|_B)+d(r^2 \alpha_j))$, where $\varpi_j \colon P_j \rightarrow B$ is the unit circle bundle of $N$, with respect to $\|\cdot\|_j$, carrying the connection $1$-form $\alpha_j=\alpha^{\nabla_j}|_{P_j}$. 
Notice that the unit circle bundle $P_j$ depends on the metric $\|\cdot\|_j$, and hence $P_{1}$ and $P_{2}$ do not necessarily coincide as subsets of $N$. 
A standard argument of Gray's theorem (see e.g. \cite[Theorem 2.2.2]{Gei}) shows that there exists a strict contactomorphism between $(P_1, \alpha_1)$ and $(P_2, \alpha_2)$ that covers a symplectomorphism $(B, \omega|_B) \rightarrow (B, \omega|_B)$. 
This leads to a symplectic embedding 
$$\kappa' \colon (P_{1} \times_{S^1} \D(\delta_1), -\varpi_1^{*}(\omega|_B)+d(r_1^2 \alpha_1)) \rightarrow  (P_{2} \times_{S^1} \D(\delta_2), -\varpi_2^{*}(\omega|_B)+d(r_2^2 \alpha_2)).$$ 
In view of the identification of $N_{\delta_j}$ with $P_{j} \times_{S^1} \D(\delta_j)$, this embedding gives the claimed map $\kappa$. 
\end{proof}

\begin{proof}[Proof of Proposition \ref{prop: del Pezzo symplectomorphism}]
It follows from Lemma \ref{lem: del Pezzo symplectomorphism} that there is a symplectomorphism $\varphi' \colon (M_1, \omega_1) \rightarrow (M_2, \omega_2)$. 
By construction, the symplectic submanifold $\varphi'(B_1)$ of $(M_2, \omega_2)$ is homologous to $B_2$. 
Hence, from Theorem \ref{thm: isotopy}, they are symplectically isotopic, and the isotopy yields a symplectomorphism, say $\varphi''$, between the pairs $(M_2, \varphi'(B_1))$ and $(M_2, B_2)$ by \cite[Proposition 4]{Aur}. 
The composition $\varphi''\circ \varphi' \colon (M_1, \omega_1) \rightarrow (M_2, \omega_2)$ is a symplectomorphism satisfying $(\varphi'' \circ \varphi') (B_1)=B_2$. 
Recall that $s_j \colon M_j \rightarrow \O(B_j)$ is a holomorphic section with $s_j^{-1}(0)=B_j$ for $j=1,2$ and let 
$$
	\lambda_j \coloneqq \frac{1}{4\pi}d^{\C} \log \|s_j\|^2, 
$$
defined on $M_j \setminus B_j$.
It turns out that Lemma \ref{lem: nbhd} is applicable to $\lambda_1$ and $(\varphi'' \circ \varphi')^{*} \lambda_2$ near $B_1$; we obtain a symplectomorphism $\Psi \colon (M_1, \omega_1) \rightarrow (M_1, \omega_1)$ satisfying $\Psi(B_1)=B_1$ and $\Psi^{*}((\varphi'' \circ \varphi')^{*} \lambda_2)=\lambda_1$ on a neighborhood of  $B_1$. 
Therefore, $\varphi'' \circ \varphi' \circ \Psi$ meets the requirements of the proposition. 
\end{proof}


\subsection{Proof of Theorem \ref{thm: relation}}\label{section: proof of thm}

Let $f \colon X\setminus B \rightarrow \CP^1$ be a Lefschetz pencil on a $2n$-dimensional closed symplectic manifold $X$ such that the closure of a regular fiber is symplectomorphic to $M$.
Recall that the global monodromy of $f$ is isotopic to a fibered Dehn twist $\tau_{\del W}$ along the boundary of $W= M \setminus \nu_{M}(B)$, where $\nu_{M}(B)$ is a tubular neighborhood of $B$ in $M$. 
Moreover, $\tau_{\del W}$ factors into a product of Dehn twists. 

\begin{lemma}
\label{lem: number}
The fibered Dehn twist $\tau_{\del W}$ is Hamiltonian isotopic to a product of 
$k$ Dehn twists along Lagrangian spheres in $M \setminus B$, where 
$$
	k=(-1)^{n}(\chi(X)-2\chi(M)+\chi(B)).
$$
\end{lemma}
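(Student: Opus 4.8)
The plan is to count the number of Lefschetz critical points of the pencil $f\colon X\dashrightarrow \CP^1$, since each vanishing cycle contributes exactly one Dehn twist to the factorization of the global monodromy, and by Theorem~\ref{thm: boundary-interior} the global monodromy (hence $\tau_{\del W}$ up to Hamiltonian isotopy) is precisely this product. So $k=|\Crit(f)|$, and the task reduces to an Euler-characteristic bookkeeping argument on the Lefschetz pencil.

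First I would recall the standard topological model for a Lefschetz pencil: blowing up $X$ along the base locus $B$ yields $\widetilde{X}=\mathrm{Bl}_B X$ together with an honest Lefschetz fibration $\widetilde{f}\colon \widetilde{X}\rightarrow \CP^1$ whose regular fiber is $\overline{F}_w$, the closure of a regular fiber of $f$ (diffeomorphic to $M$), and whose singular fibers each carry a single node. The exceptional divisor is a $\CP^1$-bundle over $B$. I would then compute $\chi(\widetilde{X})$ two ways. On one hand, $\chi(\widetilde{X})=\chi(X)+\chi(B\times\CP^1)-\chi(B)=\chi(X)+\chi(B)$ because blowing up a codimension-$4$ submanifold replaces $B$ by a $\CP^1$-bundle over it, and $\chi(\CP^1)=2$. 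On the other hand, for a Lefschetz fibration over $\CP^1$ with generic fiber $M$ and $k$ nodal fibers, the multiplicativity of Euler characteristic gives $\chi(\widetilde{X})=\chi(\CP^1)\chi(M)+k\,\big(\chi(M_{\mathrm{sing}})-\chi(M)\big)$, where a nodal fiber differs from the generic one by collapsing the vanishing $(n-1)$-sphere, so $\chi(M_{\mathrm{sing}})-\chi(M)=(-1)^{n-1}\chi(S^{n-1})$. Since $\dim M=2n-2$, the vanishing cycle is an $(n-1)$-sphere and $\chi(S^{n-1})-1 = (-1)^{n-1}$, giving $\chi(M_{\mathrm{sing}})-\chi(M)=(-1)^{n}$. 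Equating the two expressions yields $\chi(X)+\chi(B)=2\chi(M)+(-1)^{n}k$, i.e. $k=(-1)^{n}\big(\chi(X)+\chi(B)-2\chi(M)\big)=(-1)^{n}\big(\chi(X)-2\chi(M)+\chi(B)\big)$, as claimed.

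It remains to connect $k=|\Crit(f)|$ to the number of Dehn twists in the factorization of $\tau_{\del W}$. Here I would invoke the construction in Section~\ref{section: LP to LF}: after the perturbation of the symplectic form, $f|_E\colon (E,\omega')\rightarrow \C$ is a Lefschetz fibration with horizontal triviality whose regular fiber is the Weinstein domain $W=M\setminus\nu_M(B)$, and whose set of critical values is $\Critv(f)$ with $|\Crit(f)|$ critical points (assuming, as one may after a generic perturbation, that $f|_{\Crit(f)}$ is injective). Choosing a distinguished basis of vanishing paths, the global monodromy is the product of the $k$ Dehn twists along the corresponding vanishing cycles, and by Theorem~\ref{thm: boundary-interior} this product is Hamiltonian isotopic to $\tau_{\del W}$. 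Since the vanishing cycles of a Lefschetz pencil on a symplectic manifold with $\dim M = 2n-2$ are Lagrangian $(n-1)$-spheres in $W$, we get exactly $k$ Dehn twists along Lagrangian spheres.

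The main obstacle I expect is making the first Euler-characteristic computation $\chi(\widetilde{X})=\chi(X)+\chi(B)$ genuinely rigorous in the symplectic (rather than purely algebraic) category: one must be careful that the base locus $B$ is a codimension-$4$ symplectic submanifold and that the relevant blow-up/neighborhood model from Section~\ref{section: Whitney sum} identifies the exceptional locus with the projectivization of a rank-$2$ bundle over $B$, i.e. a $\CP^1$-bundle, so that the additivity $\chi(\widetilde X)=\chi(X\setminus\nu_X(B))+\chi(\CP^1\text{-bundle over }B)$ and $\chi(X)=\chi(X\setminus\nu_X(B))+\chi(B)$ both apply. Everything else is routine multiplicativity of Euler characteristics for fiber bundles and for Lefschetz fibrations over surfaces.
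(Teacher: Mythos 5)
Your proof is correct and arrives at the right formula, but the route is packaged differently from the paper's. The paper works directly with the induced Lefschetz fibration $X \setminus \nu_X(M) \rightarrow \D$ over the disc, invokes Kas's handle decomposition to write $X \setminus \nu_X(M)$ as $W \times \D$ with $k$ additional $n$-handles attached, and then reads off $\chi(X)-\chi(M)=\chi(M)-\chi(B)+(-1)^nk$. You instead globalize: you pass to the blow-up $\widetilde X=\mathrm{Bl}_B X$ with its honest Lefschetz fibration over $\CP^1$, compute $\chi(\widetilde X)=\chi(X)+\chi(B)$ from the geometry of the exceptional $\CP^1$-bundle, and compare against the fibration-additivity formula $\chi(\widetilde X)=2\chi(M)+(-1)^n k$. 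Both are elementary Euler-characteristic bookkeeping and encode the same count $k=|\Crit(f)|$; the paper's handle-count is slightly more direct, while your blow-up argument avoids invoking the handle decomposition at the cost of keeping track of the exceptional locus.

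One small slip in the write-up: the displayed identity $\chi(M_{\mathrm{sing}})-\chi(M)=(-1)^{n-1}\chi(S^{n-1})$ is not correct as stated. Collapsing the vanishing $(n-1)$-sphere gives $\chi(M_{\mathrm{sing}})-\chi(M)=1-\chi(S^{n-1})$, which equals $-(-1)^{n-1}=(-1)^n$; your next sentence derives exactly this value, so the conclusion is unaffected, but the intermediate formula should be fixed to $1-\chi(S^{n-1})$.
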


\begin{proof}
The given Lefschetz pencil $f$ induces a Lefschetz fibration $X \setminus \nu_X(M) \rightarrow \D$ with fibers diffeomorphic to $W = M \setminus \nu_M(B)$, where $\nu_X(M)$ denotes a tubular neighborhood of $M$ in $X$. 
This fibration decomposes its total space $X \setminus \nu_X(M)$ into $W \times \D$ and $k$ $n$-handles smoothly \cite{Kas}. 
Hence, we have 
$$
	\chi(X)-\chi(M) =\chi(X \setminus \nu(M)) = \chi(W\times \D)+(-1)^{n}k = \chi(M) - \chi(B)+(-1)^nk, 
$$
which concludes the lemma.
\end{proof}

\begin{proof}[Proof of Theorem \ref{thm: relation}]
Let $X_j, M_j$ and $B_j$, for $j=1,2$, be the manifolds as in Section \ref{section: del Pezzo}. 
Write $W_j$ for the complement of a tubular neighborhood of $B_j$ in $M_j$ satisfying $(W_1, \omega_1|_{W_1})$ and $(W_2, \omega_2|_{W_2})$ are symplectomorphic as in Proposition~\ref{prop: del Pezzo symplectomorphism}. 
Equip $W_1$ with a Weinstein structure by the function $-\frac{1}{4\pi}\log\|s_1\|^2$ and its gradient vector field with respect to the metric $\omega_1(\cdot, J_1 \cdot)$.
Note that although $W_2$ admits a Weinstein structure as well, to prove the theorem, we have no need of it. 

A $1$-dimensional linear system containing $M_j$ defines a Lefschetz pencil $X_j \setminus B_j \rightarrow \CP^1$. 
As we discussed in Section \ref{section: LP to LF}, one can truncate the domain of the map $X_j \setminus M_j \rightarrow \C$ ($j=1,2$) to obtain a Lefschetz fibration $\pi_j$ with regular fibers symplectomorphic to $W_j$.
The global monodromy of $\pi_j$ is a product of Dehn twists along $L_{j,1}, \ldots, L_{j,k_j}$, where $k_1=4$ and $k_2=6$. 
Indeed, according to Lemma \ref{lem: number}, we have
\begin{align*}
	k_{1} & =  (-1)^3((1+3+3+1)-2(1+4+1)+(1-2+1))  = 4,\\
	k_{2}  & =  (-1)^3((1+2+2+1)-2(1+4+1)+(1-2+1)) = 6.
\end{align*}
Theorem \ref{thm: boundary-interior} states that this product is isotopic to a fibered Dehn twist along $\del W_j$. 
The complements $W_1$ and $W_2$ are symplectomorphic, and their boundaries $\del W_1$ and $\del W_2$ carry contact structures which are strictly contactomorphic from Proposition~\ref{prop: del Pezzo symplectomorphism}. 
Thus, under the identification of $W_1$ with $W_2$, the two fibered Dehn twists $\tau_{\del W_1}$ and $\tau_{\del W_2}$ are symplectically isotopic in $\Symp_{c}(W_1, \omega_1|_{W_1})$. 
Therefore, we have 
\begin{align*}
	[\tau_{L_{1,1}} \circ \cdots \circ \tau_{L_{1,4}}]=[ \tau_{\del W_1}] = [\tau_{\del W_2}]=[\tau_{L_{2,1}} \circ \cdots \circ \tau_{L_{2,6}}]
\end{align*}
in $\pi_0(\Symp_{c}(W_1, \omega_1|_{W_1}) )$. This is the desired relation between the two products of Dehn twists. 
\end{proof}





\subsection{Application of Theorem \ref{thm: relation}}\label{section: application}

We will construct closed contact $5$-manifolds having arbitrarily many finite Weinstein fillings, each with a distinct Euler characteristic. 
The construction uses open books and Lefschetz fibrations. 
The reader is referred to \cite[Section 7.3]{Gei} and \cite{vK} for open books and relation of them to contact structures and also referred to \cite{GP} for relation of Lefschetz fibrations to Weinstein domains. 

\begin{proposition}\label{prop: fillings}
Given a positive integer $n$, there exists a closed contact $5$-manifold $(Y_n, \xi_n)$ admitting at least $n$ Weinstein fillings each of which has a distinct Euler characteristic.
\end{proposition}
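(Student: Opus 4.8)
The plan is to turn the relation of Theorem~\ref{thm: relation} into many Weinstein fillings of a single contact $5$-manifold, distinguished by their Euler characteristics. Let $(W,\omega) = (W_1, \omega_1|_{W_1})$ be the $4$-dimensional Weinstein domain of that theorem and write $\phi \coloneqq [\tau_{L_{1,1}} \circ \cdots \circ \tau_{L_{1,4}}] = [\tau_{L_{2,1}} \circ \cdots \circ \tau_{L_{2,6}}] \in \pi_0(\Symp_c(W,\omega))$ for the common mapping class the theorem provides. Given $n \ge 1$, set $k = n-1$. For each $j \in \{0, 1, \ldots, k\}$ I would construct an (abstract) Lefschetz fibration $\pi_j \colon E_j \to D^2$ with regular fiber $W$ whose ordered collection of vanishing cycles is $k-j$ consecutive copies of $(L_{1,1}, \ldots, L_{1,4})$ followed by $j$ consecutive copies of $(L_{2,1}, \ldots, L_{2,6})$, the internal ordering of each block chosen so that it contributes the class $\phi$ to the global monodromy; thus the global monodromy of $\pi_j$ equals $\phi^{k}$, independently of $j$. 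Since the $L_{i,\ell}$ are Lagrangian spheres in the Weinstein domain $W$, the correspondence between Lefschetz fibrations over $D^2$ with Weinstein fibers and Weinstein domains \cite{GP} equips each total space $E_j$ with a Weinstein structure.

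Next I would check that all the $E_j$ fill the same contact manifold. The boundary $\partial E_j$ is the contact manifold supported by the open book with page $W$ and monodromy equal to the global monodromy of $\pi_j$; this monodromy is $\phi^{k}$ for every $j$, and the supported contact structure depends only on the page and on the class $\phi^{k} \in \pi_0(\Symp_c(W,\omega))$ (see \cite{vK} and \cite[Section 7.3]{Gei}). Hence there is a single closed contact $5$-manifold $(Y_n, \xi_n)$ with $\partial E_j \cong (Y_n, \xi_n)$ for all $j$, so each $E_j$ is a Weinstein filling of $(Y_n, \xi_n)$. This is precisely where Theorem~\ref{thm: relation} enters: it is what forces all the monodromies to coincide despite the differing numbers of vanishing cycles.

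Finally I would separate the fillings by an Euler characteristic count, exactly as in the proof of Lemma~\ref{lem: number}. The fibration $\pi_j$ has $4(k-j) + 6j = 4k + 2j$ critical points, and by Kas's handle description \cite{Kas} the total space $E_j$ is obtained from $W \times D^2$ by attaching that many $3$-handles, so $\chi(E_j) = \chi(W) - 4k - 2j$. These $k+1$ numbers are pairwise distinct as $j$ runs over $\{0, \ldots, k\}$. As the Euler characteristic is a homotopy invariant, the Weinstein domains $E_0, \ldots, E_k$ are pairwise non-homotopy-equivalent, hence represent at least $k+1 = n$ distinct Weinstein fillings of $(Y_n, \xi_n)$ up to homotopy.

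There is no genuinely difficult step here: the proposition is an application of Theorem~\ref{thm: relation} through the standard dictionary relating Lefschetz fibrations over the disc, Weinstein domains, and open books. The one point deserving care is the claim that the boundaries of all the $E_j$ agree as contact manifolds, which rests on Theorem~\ref{thm: relation} together with the fact that an abstract open book, and hence its supported contact structure, depends only on the page and the isotopy class of the monodromy; the complementary fact that the $E_j$ are pairwise distinct is then immediate from the handle count.
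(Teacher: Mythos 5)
Your argument is correct and follows essentially the same route as the paper's proof: both take the abstract open book with page $W$ and monodromy $[\tau_{\partial W}]^{n-1}=\phi^{n-1}$ to define $(Y_n,\xi_n)$, use Theorem~\ref{thm: relation} to produce $n$ factorizations of $\phi^{n-1}$ into Dehn twists (interpolating between the $4$- and $6$-letter blocks), build the corresponding Lefschetz fibrations/Weinstein fillings, and then distinguish the total spaces via the Euler characteristic count $\chi(E_j)=\chi(W)-\#\{\text{critical points}\}$. The only cosmetic difference is indexing and that you assemble the fibrations first and then identify their common contact boundary, whereas the paper fixes the open book first; the mathematical content is identical.
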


\begin{proof}
Let $W_1$ and $W_2$ be the Weinstein domains obtained above, and let $L_{1,1}, \ldots, L_{1,4}$ and $L_{2,1}, \ldots, L_{2,6}$ be Lagrangian spheres in $W_1$ and $W_2$, respectively, as in the proof of Theorem \ref{thm: relation}. 
Since $W_1$ and $W_2$ are symplectomorphic, we identify them and regard $L_{2,1}, \ldots, L_{2,6}$ as Lagrangian spheres in $W_1$. 
Consider the abstract open book whose page is $W_1$ and whose monodromy is $\tau_{\del W_1}^{n-1}$.  
Let $(Y_n, \xi_n)$ denote the contact manifold associated to this open book.  
As $[\tau_{\del W_1}]=[\tau_{L_{1,1}}] \circ \cdots \circ [\tau_{L_{1,4}}]=[\tau_{L_{2,1}}] \circ \cdots \circ [\tau_{L_{2,6}}] \in \pi_0(\Symp_{c}(W_1, \omega_1|_{W_1}))$ by Theorem \ref{thm: relation}, one can factor $[\tau_{\del W_1}^{n-1}]$ into a product of Dehn twists in $n$ ways: 
$$
	[\tau_{\del W_1}^{n-1}]=([\tau_{L_{1,1}}] \circ \cdots \circ [\tau_{L_{1,4}}])^{k} \circ ([\tau_{L_{2,1}}] \circ \cdots \circ [\tau_{L_{2,6}}] )^{n-1-k}
$$
for $k=0,\ldots,n-1$. 
Each factorization gives a Lefschetz fibration whose vanishing cycles coincides with Lagrangian spheres appearing in the factorization, and the total space of the fibration, say $E_k$, serves as a Weinstein filling of $(Y_n, \xi_n)$. 
The Euler characteristic of the filling $E_k$ is given by 
$$
	\chi(E_k)=\chi(W_1)-(4k+6(n-1-k))=\chi(W_1)-2(3n-3-k).
$$
Hence, we have $\chi(E_k) \neq \chi(E_{k'})$ if $k \neq k'$.  
\end{proof}

\appendix
\section{Alternative proof of Theorem \ref{thm: boundary-interior}}\label{appendix: boundary-interior}

In this appendix, we give an alternative proof of Theorem \ref{thm: boundary-interior} for complex projective manifolds. 
The precise statement to show is the following: 

\begin{theorem}[Theorem \ref{thm: boundary-interior} in the projective setting]\label{thm: appendix}
Let $X \subset \CP^{N}$ be a closed complex projective manifold with the K\"ahler form $\omega$ induced by $\omega_{\mathrm{FS}}$ on $\CP^{N}$. 
Also let $f \colon X \setminus B \rightarrow \CP^{1}$ be a Lefschetz pencil determined by hyperplane sections and $W$ the complement of a tubular neighborhood of $B$ in the closure of a regular fiber of $f$. 
Then, a fibered Dehn twist $\tau_{\del W}$ along $\del W$ is symplectically isotopic to a product of Dehn twists along Lagrangian spheres in $W$.
\end{theorem}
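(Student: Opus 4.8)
The plan is to realise $f$, via the proposition in Section~\ref{section: LP to LF}, as a Lefschetz fibration with fibre $W$ over the affine part of $\CP^{1}$, and then --- using the algebraic resolution of the pencil --- to show that filling in the fibre over $\infty$ turns this into a Lefschetz--Bott fibration over the closed surface $\CP^{1}$ which carries, besides the $k$ Lefschetz fibres of the pencil, one Bott-type singular fibre whose critical manifold is $B$ and whose local monodromy is the fibered Dehn twist $\tau_{\del W}$. Since the total monodromy of any fibration over $\CP^{1}$ is trivial, the product of the local monodromies vanishes, and this is exactly the asserted factorisation of $[\tau_{\del W}]$ into Dehn twists.

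Concretely, let $\overline F$ denote the closure of a regular fibre of $f$ in $X$, so that $W=\overline F\setminus\nu_{X}(B)$. By the cited proposition, $f$ induces a Lefschetz fibration $p\colon E\coloneqq X\setminus(\overline F_{\infty}\cup\nu_{X}(B))\to\C$ with the horizontal triviality, fibre $W$, critical values $z_{1},\dots,z_{k}$ and vanishing cycles $\Sigma_{1},\dots,\Sigma_{k}$; since a Lefschetz pencil is submersive near its base locus, the $\Sigma_{i}$ are framed Lagrangian spheres lying in $W$, and the global monodromy of $p$ around a large circle in $\C$ is $\tau_{\Sigma_{k}}\circ\cdots\circ\tau_{\Sigma_{1}}\in\Symp_{c}(W,\omega|_{W})$. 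To identify what that large circle encircles from the other side, blow up $X$ along $B$: the pencil extends to a genuine Lefschetz fibration $\hat f\colon\hat X=\mathrm{Bl}_{B}X\to\CP^{1}$ with fibre $\overline F$ and the same critical points, and the exceptional divisor is $\mathcal E=\mathbb P(N_{B/X})$. Using pencil hypothesis~(4) of Definition~\ref{def: LP}, $N_{B/X}\cong N_{B/\overline F}^{\oplus 2}$, so $\mathcal E\cong B\times\CP^{1}$ with $\hat f|_{\mathcal E}$ the projection onto $\CP^{1}$ and $N_{\mathcal E/\hat X}\cong N_{B/\overline F}\boxtimes\mathcal O_{\CP^{1}}(-1)$. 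Equipping $\hat X$ with a K\"ahler form that is identified with $\omega$ under the blow-down away from a neighbourhood $\nu(\mathcal E)$ and with the model form $\Omega$ of Section~\ref{section: Whitney sum} near $\mathcal E$, the horizontal boundary of $E$ sits inside $\hat X\setminus\nu(\mathcal E)$ as the circle bundle $S(N_{\mathcal E/\hat X})$, and $\del W=S(N_{B/\overline F})$ carries the Boothby--Wang contact form over $(B,\omega|_{B})$ --- this is where hypothesis~(3), $c_{1}(N_{B/\overline F})=[\omega|_{B}]$, enters.

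Now a large circle in $\C$ bounds, on the $\CP^{1}$-side, a disc $D_{\infty}$ containing $\infty$ and no critical value; hence the monodromy of $p$ around it equals the monodromy around the puncture of the $W$-bundle over $D_{\infty}\setminus\{\infty\}$. In the interior of $W$ this monodromy is trivial, while near $\del W$ the $\mathcal O_{\CP^{1}}(-1)$-factor of $N_{\mathcal E/\hat X}$ forces the circle bundle $\del W\to\del D_{\infty}$ to glue by one full loop of fibrewise rotations of $S(N_{B/\overline F})$, i.e.\ one full Reeb rotation. By the definition of a fibered Dehn twist (Section~\ref{section: boundary-interior}), tapering this rotation off into the interior produces $\tau_{\del W}$ up to sign. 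Comparing the two descriptions of the monodromy along the large circle therefore gives $\tau_{\Sigma_{k}}\circ\cdots\circ\tau_{\Sigma_{1}}\simeq\tau_{\del W}^{\pm 1}$ in $\pi_{0}\Symp_{c}(W,\omega|_{W})$, and once the orientation conventions are fixed (right-handed twists; the orientation of $\del D_{\infty}$) this is the claimed relation $[\tau_{\del W}]=[\tau_{\Sigma_{k}}\circ\cdots\circ\tau_{\Sigma_{1}}]$. In the language of Lefschetz--Bott fibrations, $p$ extends across $\infty$ to a Lefschetz--Bott fibration over $\CP^{1}$ whose only non-Lefschetz fibre, over $\infty$, has critical manifold $B$ and local monodromy $\tau_{\del W}^{\mp 1}$, and the triviality of its total monodromy is the relation.

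The main obstacle is the local analysis near $\mathcal E$ (equivalently near $B$): one has to produce a K\"ahler form on $\hat X$ that is genuinely $\omega$ outside $\nu(\mathcal E)$ and genuinely the model $\Omega$ of Section~\ref{section: Whitney sum} near $\mathcal E$ --- so that $\del W$ acquires exactly the Boothby--Wang contact form with $2\pi$-periodic Reeb flow --- and then to verify that the transition function of $\mathcal O_{\CP^{1}}(-1)$ over $\del D_{\infty}$ corresponds to precisely one Reeb period, so that the Bott-fibre monodromy is exactly $\tau_{\del W}$ rather than some a priori different symplectomorphism supported near $\del W$. By contrast, the remaining steps --- identifying the global monodromy of $p$ with $\tau_{\Sigma_{k}}\circ\cdots\circ\tau_{\Sigma_{1}}$, placing the $\Sigma_{i}$ in $W$, and keeping track of signs and of the choice of distinguished basis so that the factorisation appears in the stated order --- are routine.
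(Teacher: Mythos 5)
Your proposal is correct in outline but takes a genuinely different route from the appendix, and in fact retraces (in the projective case) the original Gompf--Auroux argument that Theorem~\ref{thm: boundary-interior} cites and to which the appendix is explicitly offered as an \emph{alternative}. You blow up $X$ along $B$, compactify the fibration to $\hat f\colon\hat X\to\CP^1$, and read off the fibered Dehn twist from the symplectic monodromy of the circle bundle $S(N_{\mathcal E/\hat X})$ around the disc at infinity, using the $\mathcal O_{\CP^1}(-1)$-twist of the exceptional divisor $\mathcal E\cong B\times\CP^1$. The paper instead never blows up along $B$: writing $M=\overline F_\infty$, it identifies $X\setminus M$ with the affine variety $Y_t=\{s_0=t\}\cap X$ for $t\ne 0$, lets $t\to 0$ so that $Y_0$ degenerates to a cone, resolves the cone point by passing to the total space of $\O_M(-1)$, and shows (Proposition~\ref{prop: LBF}) that the induced map $\pi_0\colon\O_M(-1)\to\C$ is a Lefschetz--Bott fibration whose critical locus is the whole of $B$ and whose local monodromy is by construction the fibered Dehn twist $\tau_{\del W}$. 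The $1$-parameter family $\widetilde{\mathcal Y}$ of strict transforms then connects the Lefschetz fibration monodromy at $t=1$ (a product of Dehn twists) to the Lefschetz--Bott monodromy at $t=0$ ($\tau_{\del W}$) by a homotopy of loops in $([0,1]\times\C)\setminus\Critv(\Pi)$. The trade-off is precisely the point you flag as ``the main obstacle'': your route requires normalizing a K\"ahler form on $\hat X$ so that near $\mathcal E$ it is exactly the model $\Omega$ of Section~\ref{section: Whitney sum} and then verifying that the $\mathcal O_{\CP^1}(-1)$-transition over $\del D_\infty$ gives exactly one $2\pi$-Reeb period rather than ``a priori some other symplectomorphism supported near $\del W$''; the paper avoids having to carry out this local symplectic matching by making the fibered Dehn twist appear intrinsically, as the Lefschetz--Bott local monodromy of $\pi_0$ over its single critical value, with the identification coming from Proposition~\ref{prop: LBF} and the choice of the canonical symplectic form on $\O_M(-1)$. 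As submitted, your proposal therefore has an acknowledged unfilled step at exactly the point that the appendix is designed to circumvent; the rest of your outline --- identifying the product-of-Dehn-twists monodromy, locating the vanishing cycles in $W$, and the loop homotopy across $\CP^1$ --- is sound.
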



\subsection{Lefschetz--Bott fibrations}

First we briefly review Lefschetz--Bott fibrations (see \cite{Per} for a thorough treatment).

\begin{definition}
A symplectic form $\omega$ on an almost complex manifold $(M,J)$ is said to be 
\textit{normally K\"ahler} near an almost complex submanifold $N \subset M$ if 
there exists a tubular neighborhood $\mathcal{U}$ of $N$ in $M$, foliated by $J$-holomorphic normal discs $\{N_{x} \subset \mathcal{U}\}_{x \in N}$, such that $J$ is compatible with $\omega$ on $\mathcal{U}$; on each normal slice $N_x$, $J$ is integrable and $\omega$ is K\"ahler for $J$.
\end{definition}

Let $E$ be an even-dimensional open manifold equipped with a closed $2$-form $\omega$.

\begin{definition}\label{def: LB}
A smooth map $\pi: (E, \Omega) \rightarrow \C$ is called a \textit{Lefschetz--Bott fibration} if 
the set of critical points of $\pi$, denoted by $\Crit(\pi)$, is a submanifold of $E$ with finitely many connected components; 
there exists a compatible almost complex structure $J$ defined in a tubular neighborhood $\mathcal{U}$ of 
$\Crit(\pi)$ in $E$ and a positively oriented complex structure $j$ defined in a tubular neighborhood $\mathcal{V}$ of $\Critv(\pi)$ in $\C$, which satisfy the following conditions: 
\begin{enumerate}
\item The map $\pi|_{\mathcal{U}}: \mathcal{U} \rightarrow \mathcal{V}$ is $(J,j)$-holomorphic. 
\item $\Omega$ is nondegenerate on $\ker(D_{x}\pi)$ for any point $x \in E$. 
\item\label{condition: LB3} $\Omega$ is normally K\"ahler near the submanifold $\Crit(\pi)$. 
\item\label{condition: LB4} At any critical point $x \in \Crit(\pi)$, the normal complex Hessian $D^2_{x} \pi|_{N_{x}^{\otimes 2}}$ of $\pi$ is nondegenerate. 
\end{enumerate}
\end{definition}

Let us observe this definition. 
The condition (\ref{condition: LB3}), together with condition (\ref{condition: LB4}), shows that $\pi$ meets the conditions (\ref{cond: near critical points}) and  (\ref{cond: Hessian}) in the definition of Lefschetz fibration (Definition~\ref{def: Lefschetz}) when restricted to each normal disc $N_x$. 
Hence, on $\mathcal{U}$, the map $\pi$ is regarded as a family of Lefschetz fibrations, which is a complex analogue of the relationship between Morse and Morse--Bott functions.
In particular, $\pi$ is a Lefschetz fibration when $\dim_{\R} \Crit(\pi)=0$.

\begin{remark}\label{rem: compact fibers II}
We may impose the horizontal triviality on a Lefschetz--Bott fibration. 
If a Lefschetz--Bott fibration satisfies this condition, one can cut off cylindrical parts from the total space to obtain a map with compact fibers as in Remark \ref{rem: compact fibers}.  
\end{remark}

Let $\pi:(E,\Omega) \rightarrow \C$ be a Lefschetz--Bott fibration on a $2n$-dimensional manifold $E$. 
For simplicity, we assume that each fiber of $\pi$ contains at most one connected component of $\Crit(\pi)$. 
Fix a critical value $z_1 \in \Critv(\pi)$ and write $C$ for the component of $\Crit(\pi)$ corresponding to $z_{1}$. 
Although the (real) dimension of $C$ can be between $0$ and $2n-4$ in general, this paper only deals with $C$ of dimension $0$ and $2n-4$. 
It suffices to examine the case $\dim_{\R} C=2n-4$ as we have already discussed Lefschetz fibrations. 
Fix a point $z_{0} \in \C \setminus \Critv(\pi)$ and 
choose a vanishing path $\gamma: [0,1] \rightarrow \C \setminus \Critv(\pi)$ for $z_1$ based at $z_0$. 
As in the case of Lefschetz fibrations, one can associate a \textit{coisotropic} submanifold $V_{\gamma}$ in $\pi^{-1}(z_{0})$ to $\gamma$, which is diffeomorphic to a circle bundle over $C$. 
We call $V_{\gamma}$ the \textit{vanishing cycle} of $\gamma$. 
Throughout this paper, $V_{\gamma}$ may be assumed to be the boundary of a Liouville (or Weinstein) domain with periodic Reeb orbits.
Let $\ell$ be a loop based at $z_0$ obtained from $\gamma$ as in Figure \ref{fig: loop}. 
The monodromy of $\pi$ along $\ell$ agrees with a fibered Dehn twist along $V_{\gamma}$.

\subsection{Lefschetz--Bott fibrations on holomorphic line bundles}

Let $(M, J_{M}, \omega_{M})$ be a compact K\"ahler manifold of $\dim_{\C}M=n$ with $\omega_{M}$ integral.  
Suppose that $H$ is a smooth and reduced complex hypersurface in $M$ with $[\omega_{M}]=\PD([H])$. 
We write $p: \O(-H)\rightarrow M$ for the dual bundle of a holomorphic line bundle 
$\O(H) \rightarrow M$ corresponding to $H$.  
Let $J$ be an integrable almost complex structure on $\O(-H)$ for which $p$ is $(J,J_M)$-holomorphic, and fix a Hermitian metric on $\O(-H)$.
Choose a holomorphic section $s: M \rightarrow \O(H)$ with $s^{-1}(0)=H$. 
Note that $s$ is transverse to the zero-section since $H$ is reduced. 
We define the map $\pi: \O(-H) \rightarrow \C$ by 
$$
	\pi(x) \coloneqq \langle s(p(x)), x\rangle, 
$$
where $\langle \cdot , \cdot \rangle$ denotes the canonical coupling. 
We equip $\O(-H)$ with a canonical symplectic structure $\Omega= \Omega^{\nabla}$ determined by a Hermitian connection $\nabla$ with curvature $2\pi i \omega_{M}$ as in Section \ref{section: symplectic del Pezzo}. 
By construction, $J$ is $\Omega$-compatible. 

\begin{proposition}\label{prop: LBF}
The map $\pi: (\O(-H), \Omega) \rightarrow \C$ is a Lefschetz--Bott fibration. 
\end{proposition}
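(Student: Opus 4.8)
The plan is to verify the defining conditions of a Lefschetz--Bott fibration one at a time, taking for $J$ the given integrable complex structure on $\O(-H)$ and for $j$ the standard complex structure on $\C$. The decisive structural fact, which I would record first, is that $\pi$ is a \emph{globally holomorphic} map: it factors as $x\mapsto(s(p(x)),x)\in\O(H)\times_M\O(-H)$ followed by the fibrewise $\C$-bilinear coupling $\O(H)\times_M\O(-H)\to\C$, both of which are holomorphic, so $\pi$ is $(J,j)$-holomorphic on all of $\O(-H)$. This settles condition (1) of the definition outright, and also shows $\Crit(\pi)$ (the zero locus of $d\pi$) is a complex-analytic subset.

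Next I would compute $\Crit(\pi)$ and $\Critv(\pi)$. Over a holomorphic trivialising chart $U\subset M$ with $\O(-H)|_U\cong U\times\C$, write $v$ for the fibre coordinate and $\sigma$ for the holomorphic function representing $s$, so that $\pi=\sigma v$ and $d\pi=v\,d\sigma+\sigma\,dv$. This vanishes precisely when $\sigma=0$ and $v\,d\sigma=0$; since $H$ is reduced, $s$ is transverse to the zero section, so $d\sigma\neq0$ along $H$, which forces $v=0$. Hence $\Crit(\pi)$ is exactly the zero section of $\O(-H)$ over $H$: a smooth compact complex submanifold of complex codimension $2$, with finitely many connected components (it is a complex hypersurface of $M$), and $\Critv(\pi)=\{0\}$. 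This is the case $\dim C=2\dim_{\C}\O(-H)-4$ singled out in the paper.

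For conditions (3) and (4) I would pin down the local normal form near $\Crit(\pi)$. Near a point of $H$, choose a holomorphic coordinate $w$ on $M$ with $H=\{w=0\}$ locally and write $\sigma=wg$ with $g$ holomorphic; transversality of $s$ forces $g\neq0$ along $H$, so $\tilde w\coloneqq wg$ together with complementary coordinates $w_2,\dots,w_n$ is again a holomorphic chart, and in it $\pi=\tilde w\,v$. Consequently the slices $\{w_2=c_2,\dots,w_n=c_n\}$ form a $J$-holomorphic foliation transverse to $\Crit(\pi)$ on which $\pi$ restricts to the standard node $\tilde w\,v$, whose complex Hessian is $\bigl(\begin{smallmatrix}0&1\\1&0\end{smallmatrix}\bigr)$ and hence nondegenerate; this is condition (4). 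Assembling these local foliations over a tubular neighbourhood $\mathcal U$ of $\Crit(\pi)$ and using that $J$ is integrable while, as noted in the construction, $\Omega=\Omega^{\nabla}$ is $J$-compatible near the zero section (cf.\ Section \ref{section: symplectic Del Pezzo}), $\Omega$ is K\"ahler for $J$ on $\mathcal U$, in particular on each normal slice, giving condition (3).

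Finally, condition (2): at a critical point $\ker D_x\pi=T_xE$ and $\Omega$ is symplectic there, while at a regular point $\ker D_x\pi$ is a $J$-complex subspace (because $\pi$ is holomorphic), on which the $J$-compatible form $\Omega$ is automatically nondegenerate; I would carry this out on a disc subbundle $\O(-H)(\delta)$, imposing the horizontal triviality of Remark \ref{rem: compact fibers II} on the cylindrical fibre ends if needed. I expect the step requiring the most care to be the normally K\"ahler condition (3): organising the local charts in which $\pi=\tilde w\,v$ into a single tubular neighbourhood foliated by $J$-holomorphic normal slices with $\Omega$ K\"ahler throughout. Everything there is honestly holomorphic, so the difficulty is bookkeeping rather than substance, but it is the place where one must simultaneously control the foliation (built from the normal form, hence from transversality of $s$) and the K\"ahler property (from $J$-compatibility of $\Omega^{\nabla}$).
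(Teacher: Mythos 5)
Your verification of conditions (1), (2), and (4) is correct and closely parallels the paper's argument, with a pleasant simplification: changing coordinates to $\tilde w = wg$ (using transversality of $s$) to put $\pi$ in the normal form $\tilde w\,v$, from which the rank and nondegeneracy of the normal Hessian are immediate, is cleaner than the paper's explicit $(n+1)\times(n+1)$ matrix computation. The identification $\Crit(\pi)=H_0$ via $d\pi = v\,d\sigma+\sigma\,dv$ is also the same as the paper's.

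However, there is a genuine gap in condition (3), and you have put your finger on exactly where it is without actually filling it. You write that one should "assemble these local foliations over a tubular neighbourhood $\mathcal U$ of $\Crit(\pi)$," and then concede this is "the step requiring the most care" but insist "the difficulty is bookkeeping rather than substance." That is not so: the local foliations $\{w_2=c_2,\dots,w_n=c_n\}$ depend on an arbitrary choice of complementary holomorphic coordinates in each chart, and two such local foliations by holomorphic normal discs need not agree on overlaps. Holomorphicity is a rigid, pointwise condition that cannot be patched by a partition of unity, so there is no soft reason the local slices glue. What is needed is a \emph{global, chart-independent} construction of a holomorphic normal foliation of a neighbourhood of $H$ in $M$; this is the substantive content of condition (3), and the paper supplies it. Specifically, the paper considers $\|s\|^2\colon M\to\R$, observes that $H$ is a Morse--Bott critical submanifold, and takes the unstable manifolds of the gradient vector field $\nabla\|s\|^2$ for the metric $\omega_M(\cdot,J_M\cdot)$. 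The paper then argues (using $\omega_M(\nabla\|s\|^2, J_M\nabla\|s\|^2)>0$ off $H$ and removal of singularities for finite-energy punctured holomorphic discs) that each unstable manifold is a holomorphic disc, giving a canonical foliation of a tubular neighbourhood $U$ of $H$ by holomorphic slices. Restricting $\O(-H)$ to $U$ then produces the required foliation of a neighbourhood of $H_0$. Your proof would be complete if you replaced the "assembling" sentence with this gradient-flow construction (or some other canonical foliation), and supplied the short argument for why the unstable discs are $J$-holomorphic.
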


\begin{proof}
As $\pi$ is a $(J,j)$-holomorphic map for the standard complex structure $j$ on $\C$, 
the kernel of $D\pi$ at each point $x$ is a $J$-complex vector space. 
Thanks to the $\Omega$-compatibility of $J$, $\Omega$ is nondegenerate on $\ker(D_{x}\pi)$. 
Now all we have to do is examine the behavior of $\pi$ near the critical point set $\Crit(\pi)$.  

We first specify $\Crit(\pi)$. 
Take an open cover $\{ U_{\alpha}\}$ of $M$ such that 
$p|_{p^{-1}(U_{\alpha})}$ can be trivialized. 
For convenience, identifying $p^{-1}(U_{\alpha})$ with $U_{\alpha} \times \C$, we regard the restriction $s|_{U_{\alpha}}$ as a map from $U_\alpha$ to $U_{\alpha} \times \C$. 
Set $s_{\alpha} \coloneqq \mathrm{pr}_2 \circ s|_{U_{\alpha}}$, where $\mathrm{pr}_2 \colon U_{\alpha} \times \C \rightarrow \C$ is the projection to the second factor. 
On $p^{-1}(U_{\alpha})$, the function $\pi$ can be written as 
\begin{eqnarray*}
	\pi(b,z) =\langle (s \circ p)(b,z), (b,z)\rangle 
	 =  \langle (b,s_{\alpha}(b)), (b,z)\rangle 
	 =  s_{\alpha}(b)z  
\end{eqnarray*}
for $(b,z) \in U_{\alpha} \times \C$; this value is independent of the choice of coordinates. 
The Jacobi matrix of $\pi$ is given by 
$$
	\left( \frac{\del s_{\alpha}}{\del b_{1}}(b)z, \ldots, \frac{\del s_{\alpha}}{\del b_{n}}(b)z, s_{\alpha}(b)\right),
$$
where $(b_1, \ldots, b_n)$ are coordinates on $U_{\alpha}$.
Hence, $s_{\alpha}(b)$ must be $0$ for $(b,x) \in \Crit(\pi)$. 
Furthermore, we obtain $z=0$ by the transversality of the section $s$. 
Thus, 
$$
	\Crit(\pi)= s(M) \cap \{ \textrm{the zero-section}\} \eqqcolon H_0
$$ 
which is a complex submanifold of 
$\O(-H)$ biholomorphic to $H$. 

Now we shall construct a tubular neighborhood of $H_0$ that makes $\Omega$ normally K\"ahler. 
The complex Hessian matrix $D^2\pi$ at any critical point $(b,0) \in H_{0}$ is given by 
$$
	\renewcommand{\arraystretch}{1.3}\left.
   \begin{pmatrix} 
   
      \frac{\del^{2}s_{\alpha}}{\del b_{1} \del b_{1}}(b)z & \cdots & \frac{\del^{2}s_{\alpha}}{\del b_{1} \del b_{n}}(b)z  & \frac{\del s_{\alpha}}{\del b_{1}}(b) \\
      \vdots & \ddots & \vdots & \vdots \\
            \frac{\del^{2}s_{\alpha}}{\del b_{n} \del b_{1}}(b)z & \cdots & \frac{\del^{2}s_{\alpha}}{\del b_{n} \del b_{n}}(b)z  & \frac{\del s_{\alpha}}{\del b_{n}}(b) \\ 
       \frac{\del s_{\alpha}}{\del b_{1}}(b) & \cdots & \frac{\del s_{\alpha}}{\del b_{n}}(b) & 0
   \end{pmatrix} \right|_{(b,0)}
   =
      \begin{pmatrix} 
      0 & \cdots & 0  & \frac{\del s_{\alpha}}{\del b_{1}}(b) \\
      \vdots & \ddots & \vdots & \vdots \\
            0 & \cdots & 0  & \frac{\del s_{\alpha}}{\del b_{n}}(b) \\ 
       \frac{\del s_{\alpha}}{\del b_{1}}(b) & \cdots & \frac{\del s_{\alpha}}{\del b_{n}}(b) & 0
   \end{pmatrix}.
 $$ 
This shows that the complex matrix has complex rank $2$ and $\ker(D^2\pi)=TH_{0}^{\otimes 2}$. 
Consider the function $\|s\|^2 \colon M \rightarrow \R, x \mapsto \|s(x)\|^2$ and its gradient vector field $\nabla \|s\|^2$ with respect to the Riemannian metric $\omega(\cdot, J_M \cdot)$. 
We observe that $H \subset M$ is a Morse--Bott submanifold of this function, and the unstable manifold of $\nabla \|s\|^2$ at $x \in H$ is a holomorphic disc near $H$ since $\omega_{M}(\nabla \|s\|^2, J_M \nabla\|s\|^2)>0$ outside $H$ and a finite energy punctured holomorphic disc extends to a holomorphic disc. 
Hence, one can take a tubular neighborhood $U$ of $H$ in $M$ whose normal slices are holomorphic. 
Regarding the restriction $\O(-H)|_U$ of the bundle as a neighborhood of $H_0$ in $\O(-H)$, we obtain the desired tubular neighborhood $\mathcal{U}$ of $H_0$ in $\O(-H)$.
To check that $\mathcal{U}$ satisfies the conditions for $\Omega$ to be normally K\"ahler, we first find that $H_0$ is a complex submanifold of $(\mathcal{U}, J)$ since it is a complex submanifold of $\O(-H)$. 
By definition, $J$ is compatible with $\Omega$ on the whole $\O(-H)$, and it is the same on $\mathcal{U}$. 
In addition, by construction, the neighborhood $\mathcal{U}$ is foliated by complex $2$-dimensional discs $N_x$, each of which splits into the aforementioned normal slice in $U$ and a fiber of $\O(-H)$. 
Thus, each normal slice of $\mathcal{U}$ is $J$-holomorphic.
In view of the integrability of $J$ on $\O(-H)$, it is also integrable on each $N_x$, and $\Omega$ is K\"ahler for $J$.

The fact that $\ker (D^2\pi)=TH_{0}^{\otimes 2}$ proves that the normal Hessian of $\pi$ is nondegenerate at any point of $H_{0}$. 
This implies that $\pi$ meets the condition (\ref{condition: LB4}) in Definition \ref{def: LB}, which completes the proof. 
\end{proof}

\begin{remark}
A similar statement to the above theorem holds for closed integral symplectic manifolds $(M, \omega)$ having a codimension $2$ symplectic submanifold $\Sigma$ with $[\omega]=\PD([\Sigma])$. 
In fact, the author shows that a prequantization line bundle over such a closed integral symplectic manifold admits a Lefschetz--Bott fibration; see \cite{Oba}. 
\end{remark}

\subsection{Proof of Theorem \ref{thm: appendix}}


We are now in a position to prove Theorem \ref{thm: appendix}. 
The idea of the proof is to connect a Lefschetz fibration obtained from a Lefschetz pencil to a Lefschetz--Bott fibration on a certain space via one-parameter family of fibrations. 
In the following proof, given a complex projective manifold $M \subset \CP^N$, we denote the restriction of the hyperplane line bundle $\O_{\CP^{N}}(1) \rightarrow \CP^N$ to $M$ by $\O_{M}(1) \rightarrow M$.

\begin{proof}[Proof of Theorem \ref{thm: appendix}]\
Since a complex projective manifold is algebraic by Chow's theorem \cite[p.~167]{GH}, we may assume that the complex projective manifold $X$ is the zero set of homogeneous polynomials $f_{1}, \ldots, f_{k}$. 
Take generic hyperplane sections $s_{0}$ and $s_{1}$ that provides the given Lefschetz pencil $f \colon X \setminus B \rightarrow \CP^1$, i.e., 
$$
	f \colon X \setminus B \rightarrow \CP^1=\C \cup \{\infty\}, \quad f(z) = \frac{s_{1}(z)}{s_{0}(z)}, 
$$
where 
$B=s_{0}^{-1}(0) \cap s_{1}^{-1}(0) \cap X$.
Set $M \coloneqq s_0^{-1}(0) \cap X$ and suppose that it is smooth.
The complement $X \setminus M$ can be identified with the affine manifold $Y_{t}$ given by
$$
	Y_{t} = \{ x \in \C^{N+1} \mid s_{0}(x)=t, f_1(x)= \cdots = f_k(x)=0 \}
$$
for any $t \in \C \setminus \{0\}$. 
Indeed, the projection $\C^{N+1} \setminus \{0\} \rightarrow \CP^{N}$ yields a diffeomorphism between $Y_t$ and $X \setminus M$. 
By the definition of $\omega_{\textrm{FS}}$, $(Y_t, \rho_{\textrm{FS}}|_{Y_{t}})$ and $(X \setminus M, \omega_{\textrm{FS}}|_{X \setminus M})$ are symplectomorphic with $\rho_{\textrm{FS}}=-\frac{1}{4\pi}dd^{\C}\log\|x\|^2$. 
Notice that $M=\overline{f^{-1}(\infty)}$. 
Applying Proposition~\ref{prop: LP to LF} to the Lefschetz pencil $f \colon X \setminus B \rightarrow \CP^1$ gives a Lefschetz fibration on $X \setminus (M \cup \nu_{X}(B))$, where $\nu_{X}(B)$ is some tubular neighborhood of $B$ in $X$. 
This fibration agrees with the map 
$$
	\pi_{t} \colon (Y_{t}, \rho_{\textrm{FS}}|_{Y_t}) \rightarrow \C=\CP^{1} \setminus \{\infty\}, \quad x \mapsto s_{1}(x)/t 
$$ 
after truncating the fibers of $\pi_t$ and perturbing $\rho_{\textrm{FS}}|_{Y_t}$ as in Proposition~\ref{prop: LP to LF}. 
For simplicity, in what follows, we will regard $\pi_{t} \colon (Y_{t}, \rho_{\textrm{FS}}|_{Y_t}) \rightarrow \C$ itself as the Lefschetz fibration derived from the pencil $f$.

Next, we construct a Lefschetz--Bott fibration on the holomorphic line bundle $\O_{M}(-1) \coloneqq \O_{M}(1)^{*}$ by applying Proposition \ref{prop: LBF} to $p \colon \O_{M}(-1) \rightarrow M$. 
Observe that the bundle $\O_{M}(-1)$ is described as 
$$
	\O_{M}(-1) = \{ (z, x) \in \CP^{N} \times \C^{N+1} \mid s_0(z)=f_{1}(z)=\cdots=f_{k}(z)=0, x \in \ell_{z} \},
$$
where $\ell_{z}$ denotes the complex line in $\C^{N+1}$ spanned by $z$.
We may think of $s_1$ as a holomorphic section of $\O_{X}(1) \rightarrow X$ that restricts to one of $\O_{M}(1)\rightarrow M$ with $(s_1|_{M})^{-1}(0)=B$. 
Hence, Proposition \ref{prop: LBF} concludes that the map $\pi_{0} \colon (\O_{M}(-1), \Omega) \rightarrow \C$ defined by 
$$
	\pi_0(z,x) \coloneqq \langle s_{1}(z), (z,x) \rangle=s_1(x)
$$ 
is a Lefschetz--Bott fibration. 
Here $\Omega$ is a canonical symplectic form on the line bundle $\O_M(-1)$ as in Section \ref{section: symplectic del Pezzo}.

Now we relate this fibration $\pi_0$ to Lefschetz fibrations $\pi_t$ ($t \neq 0$). 
To do this, we shall first see $\O_{M}(-1)$ from a different viewpoint: 
Let $\tau \colon \O_{M}(-1) \rightarrow \C^{N+1}$ be the restriction of the projection $\CP^{N} \times \C^{N+1} \rightarrow \C^{N+1}$ to $\O_{M}(-1)$ and write $Y_{0}$ for the image of $\O_{M}(-1)$ under $\tau$. 
Notice that $Y_0$ is expressed as $$Y_0 =\{ x \in \C^{N+1} \mid s_0(x)=f_{1}(x)=\cdots=f_{k}(x)=0\}.$$
Since $M=s_0^{-1}(0) \cap X$ is assumed to be smooth, the projection $\C^{N+1} \setminus \{0\} \rightarrow \CP^N$ induces a smooth map $Y_0 \setminus \{0\} \rightarrow M$.
Thus, $Y_0$ has at most a unique singular point at the origin. 
Hence, the bundle $\O_{M}(-1)$ can be regarded as a resolution of this singularity. 
Next, let $\widetilde{Y}_{t}$ denote the strict transform of $Y_t$ under blowing up $\C^{N+1}$ at the origin. 
Set 
$$
	\widetilde{\mathcal{Y}} \coloneqq  \bigcup_{t \in [0,1]} \{t \} \times \widetilde{Y}_{t} \subset [0,1] \times \O_{\CP^N}(-1).
$$ 
By definition, $\widetilde{Y}_{0}$ coincides with $\O_{M}(-1)$, and the others $\widetilde{Y}_{t}$ are isomorphic to $Y_t $ since they do not contain $0 \in \C^{N+1}$. 
Define the map $\widetilde{\Pi} \colon \widetilde{\mathcal{Y}} \rightarrow [0,1] \times \C$ by 
$$
	\widetilde{\Pi}(t,z,x) \coloneqq (t, \langle s_1(z), (z,x) \rangle) = (t, s_1(x)), 
$$
which agrees with $\pi_{t}$ on every slice $\{t\} \times \widetilde{Y}_{t}$ up to a constant multiple. 

Take a K\"ahler form $\omega_{0}$ on the line bundle $\O_{\CP^N}(-1)$ such that it is a canonical symplectic form associated to a Hermitian connection near the zero-section, and far from here it equals the pull-back of $\rho_{\textrm{FS}}$ on $\C^{N+1} \setminus \{0\}$ by the blowing-up map. 
We define a closed $2$-form on $[0,1] \times \O_{\CP^N}(-1)$ by $\mathrm{pr}_{2}^{*} \omega_{0}$, where $\mathrm{pr}_2 \colon [0,1] \times \O_{\CP^N}(-1) \rightarrow \O_{\CP^N}(-1)$ is the projection, and put 
$$
	\Omega_0=\mathrm{pr}_{2}^{*} \omega_{0}|_{\widetilde{\mathcal{Y}}}.
$$ 
Then, each regular fiber of $\widetilde{\Pi}$ is a symplectic manifold with respect to $\Omega_{0}$, which shows that 
$$
	\widetilde{\Pi}|_{\widetilde{\Pi}^{-1}(([0,1] \times \C) \setminus \Critv(\widetilde{\Pi}))} \colon \widetilde{\Pi}^{-1}(([0,1] \times \C) \setminus \Critv(\widetilde{\Pi})) \rightarrow ([0,1] \times \C) \setminus \Critv(\widetilde{\Pi})
$$ 
is a symplectic fibration. 
The symplectic connection defines parallel transport along a path in $([0,1] \times \C) \setminus \Critv(\widetilde{\Pi})$, especially the monodromy along a loop in the same space. 
Perturbing the $2$-form $\Omega_{0}$ as in \cite[Lemma 9.3]{Keating}, we may assume that each fibration $\widetilde{\Pi}|_{\widetilde{Y}_{t}}$ is a Lefschetz(--Bott) fibration with horizontal triviality. 
As a result, in view of Remark \ref{rem: compact fibers} and \ref{rem: compact fibers II}, removing cylindrical parts from the fibers of $\widetilde{\mathcal{Y}}$, 
one can obtain a subspace $\mathcal{Y} \subset \widetilde{\mathcal{Y}}$ such that the restriction $\Pi \coloneqq \widetilde{\Pi}|_{\mathcal{Y}} \colon (\mathcal{Y}, \Omega_0|_{\mathcal{Y}})\rightarrow [0,1] \times \C$ has compact fibers symplectomorphic to the complement $(W, \omega_{\textrm{FS}}|_{W})$ of a tubular neighborhood of $B$ in $M$.  
The critical value set $\Critv(\Pi)$ of $\Pi$ consists of the family of finite sets $\Critv({\Pi}|_{\Pi^{-1}(\{t\} \times \C)})= \Critv(\widetilde{\Pi}|_{\widetilde{Y}_{t}})$ with $t \in [0,1]$. 
Thus, $([0,1] \times \C) \setminus \Critv(\Pi)$ is path-connected. 

Choose $x_{0} \in \C$ with $|x_{0}|$ sufficiently large and put $b_{0}=(0,x_{0}) \in [0,1] \times \C$. 
Identify $\Pi^{-1}(b_0)$ with $(W, \omega_{\textrm{FS}}|_W)$. 
Take a loop $\ell_{0}:[0,1] \rightarrow (\{ 0 \} \times \C) \setminus \Critv(\Pi) $ based at $b_{0}$ that encloses $( \{0 \} \times \C ) \cap \Critv(\Pi)$. 
As $\Pi|_{\Pi^{-1}(\{0\} \times \C)}$ is a Lefschetz--Bott fibration, the monodromy along $\ell_{0}$ is a fibered Dehn twist along $\del W$. 
Also, take a loop $\ell_{1} \colon [0,1] \rightarrow (\{ 1 \} \times \C) \setminus \Critv(\Pi) $ based at $(1,x_{0})$ surrounding all the points of $(\{1\} \times \C )\cap \Critv(\Pi)$. 
Since $\Pi|_{\Pi^{-1}(\{ 1 \} \times \C)}$ is a Lefschetz fibration, the monodromy of $\Pi$ along $\ell_{1}$ is a product of Dehn twists.
We connect $\ell_{1}$ with the path $\gamma \colon [0,1] \rightarrow [0,1] \times \C$, $t \mapsto (t,x_{0})$ and denote the resulting loop $\gamma \cdot \ell_1 \cdot \gamma^{-1}$ by $\ell'_1$ (see Figure \ref{fig: LBF_LF}). 
It turns out that the monodromy along $\ell'_1$  is isotopic to a product of Dehn twists, and $\ell'_1$ is homotopic to $\ell_0$ in $([0,1] \times \C) \setminus \Critv(\Pi)$. 
Therefore, the monodromies along $\ell_0$ and $\ell'_1$ are symplectically isotopic, which implies the desired mapping class group relation. 
\end{proof}

\begin{figure}[ht]
	\centering
	\begin{overpic}[width=200pt,clip]{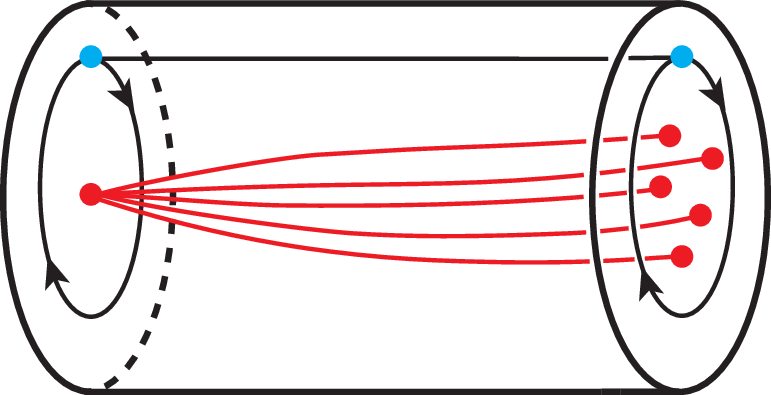}
 	\put(22,67){$\ell_{0}$}
	\put(178,67){$\ell_{1}$}
  	\put(5,-10){$\{0\} \times \C$}
	\put(155,-10){$\{1\} \times \C$}
	\put(20,92){$b_0$}
	\put(90,78){$\gamma$}
	\put(80,20){$\Critv(\Pi)$}
	\end{overpic}
	\vspace{10pt}
	\caption{Loops $\ell_0, \ell_1$ and path $\gamma$ in $I \times \C$. The red segments indicate the critical value set of $\Pi$. }
	\label{fig: LBF_LF}
\end{figure}


\begin{thebibliography}{EMVHM11}

\bibitem[AA16]{AA}
Bahar Acu and Russell Avdek.
\newblock Symplectic mapping class group relations generalizing the chain
  relation.
\newblock {\em Internat. J. Math.}, 27(12):1650096, 26, 2016.

\bibitem[AO01]{AO}
Selman Akbulut and Burak Ozbagci.
\newblock Lefschetz fibrations on compact {S}tein surfaces.
\newblock {\em Geom. Topol.}, 5:319--334, 2001.

\bibitem[Aur97]{Aur}
Dennis Auroux.
\newblock Asymptotically holomorphic families of symplectic submanifolds.
\newblock {\em Geom. Funct. Anal.}, 7(6):971--995, 1997.

\bibitem[Aur01]{Au}
Denis Auroux.
\newblock Symplectic maps to projective spaces and symplectic invariants.
\newblock {\em Turkish J. Math.}, 25(1):1--42, 2001.

\bibitem[AS08]{AS}
Denis Auroux and Ivan Smith.
\newblock Lefschetz pencils, branched covers and symplectic invariants.
\newblock In {\em Symplectic 4-manifolds and algebraic surfaces}, volume 1938
  of {\em Lecture Notes in Math.}, pages 1--53. Springer, Berlin, 2008.
  
\bibitem[Bar99]{Bar}
Jean-Fran\c{c}ois Barraud.
\newblock Nodal symplectic spheres in {$\mathbb{CP}^2$} with positive
  self-intersection.
\newblock {\em Internat. Math. Res. Notices}, (9):495--508, 1999.

\bibitem[BVHM15]{BVH}
R.~\.{I}nan\c{c} Baykur and Jeremy Van Horn-Morris.
\newblock Families of contact {$3$}-manifolds with arbitrarily large {S}tein
  fillings.
\newblock {\em J. Differential Geom.}, 101(3):423--465, 2015.
\newblock With an appendix by Samuel Lisi and Chris Wendl.

\bibitem[Bir01]{Biran}
Paul Biran.
\newblock Lagrangian barriers and symplectic embeddings.
\newblock {\em Geom. Funct. Anal.}, 11(3):407--464, 2001.

\bibitem[Boo69]{Boothby}
William~M. Boothby.
\newblock Transitivity of the automorphisms of certain geometric structures.
\newblock {\em Trans. Amer. Math. Soc.}, 137:93--100, 1969.

\bibitem[Cha13]{Chang}
Ching-Hao Chang.
\newblock {\em Isotopy of nodal symplectic spheres in rational manifolds}.
\newblock PhD thesis, Retrieved from the University of Minnesota Digital
  Conservancy, http://hdl.handle.net/11299/161575, 2013.

\bibitem[DKP15]{DKP}
Elif Dalyan, Mustafa Korkmaz, and Mehmetcik Pamuk.
\newblock Arbitrarily long factorizations in mapping class groups.
\newblock {\em Int. Math. Res. Not. IMRN}, (19):9400--9414, 2015.

\bibitem[EG10]{EG}
Hisaaki Endo and Yusuf~Z. Gurtas.
\newblock Lantern relations and rational blowdowns.
\newblock {\em Proc. Amer. Math. Soc.}, 138(3):1131--1142, 2010.

\bibitem[EMVHM11]{EMVH}
Hisaaki Endo, Thomas~E. Mark, and Jeremy Van Horn-Morris.
\newblock Monodromy substitutions and rational blowdowns.
\newblock {\em J. Topol.}, 4(1):227--253, 2011.

\bibitem[Eva11]{Evans}
Jonathan~David Evans.
\newblock Symplectic mapping class groups of some {S}tein and rational
  surfaces.
\newblock {\em J. Symplectic Geom.}, 9(1):45--82, 2011.

\bibitem[Fuj80]{Fujita}
Takao Fujita.
\newblock On the structure of polarized manifolds with total deficiency one.
  {I}.
\newblock {\em J. Math. Soc. Japan}, 32(4):709--725, 1980.

\bibitem[Gei08]{Gei}
Hansj\"{o}rg Geiges.
\newblock {\em An introduction to contact topology}, volume 109 of {\em
  Cambridge Studies in Advanced Mathematics}.
\newblock Cambridge University Press, Cambridge, 2008.

\bibitem[GP17]{GP}
Emmanuel Giroux and John Pardon.
\newblock Existence of {L}efschetz fibrations on {S}tein and {W}einstein
  domains.
\newblock {\em Geom. Topol.}, 21(2):963--997, 2017.

\bibitem[Gom04]{Go}
Robert~E. Gompf.
\newblock Symplectic structures from {L}efschetz pencils in high dimensions.
\newblock In {\em Proceedings of the {C}asson {F}est}, volume~7 of {\em Geom.
  Topol. Monogr.}, pages 267--290. Geom. Topol. Publ., Coventry, 2004.

\bibitem[GH94]{GH}
Phillip Griffiths and Joseph Harris.
\newblock {\em Principles of algebraic geometry}.
\newblock Wiley Classics Library. John Wiley \& Sons, Inc., New York, 1994.
\newblock Reprint of the 1978 original.

\bibitem[Har77]{Hart}
Robin Hartshorne.
\newblock {\em Algebraic geometry}.
\newblock Springer-Verlag, New York-Heidelberg, 1977.
\newblock Graduate Texts in Mathematics, No. 52.

\bibitem[HLS97]{HLS}
Helmut Hofer, V\'{e}ronique Lizan, and Jean-Claude Sikorav.
\newblock On genericity for holomorphic curves in four-dimensional
  almost-complex manifolds.
\newblock {\em J. Geom. Anal.}, 7(1):149--159, 1997.

\bibitem[Kas80]{Kas}
Arnold~S. Kas.
\newblock On the handlebody decomposition associated to a {L}efschetz
  fibration.
\newblock {\em Pacific J. Math.}, 89(1):89--104, 1980.

\bibitem[Kea14]{Keating_free}
Ailsa Keating.
\newblock Dehn twists and free subgroups of symplectic mapping class groups.
\newblock {\em J. Topol.}, 7(2):436--474, 2014.

\bibitem[Kea22]{Keating}
Ailsa Keating.
\newblock On symplectic stabilisations and mapping classes.
\newblock {\em Bull. Lond. Math. Soc.}, 54(2):718--736, 2022.

\bibitem[vK17]{vK}
Otto van Koert.
\newblock Lecture notes on stabilization of contact open books.
\newblock {\em M\"{u}nster J. Math.}, 10(2):425--455, 2017.

\bibitem[Laz20]{Laz}
Oleg Lazarev.
\newblock Maximal contact and symplectic structures.
\newblock {\em J. Topol.}, 13(3):1058--1083, 2020.

\bibitem[LLW15]{LLW}
Jun Li, Tian-Jun Li, and Weiwei Wu.
\newblock The symplectic mapping class group of {$\mathbb{
  CP}^2\#n\overline{\mathbb{CP}^2}$} with {$n \le 4$}.
\newblock {\em Michigan Math. J.}, 64(2):319--333, 2015.

\bibitem[LM16]{LiMak}
Tian-Jun Li and Cheuk~Yu Mak.
\newblock Symplectic log {C}alabi-{Y}au surface---deformation class.
\newblock {\em Adv. Theor. Math. Phys.}, 20(2):351--379, 2016.

\bibitem[LP01]{LP}
Andrea Loi and Riccardo Piergallini.
\newblock Compact {S}tein surfaces with boundary as branched covers of {$B^4$}.
\newblock {\em Invent. Math.}, 143(2):325--348, 2001.

\bibitem[Man86]{Manin}
Yu.~I. Manin.
\newblock {\em Cubic forms}, volume~4 of {\em North-Holland Mathematical
  Library}.
\newblock North-Holland Publishing Co., Amsterdam, second edition, 1986.
\newblock Algebra, geometry, arithmetic, Translated from the Russian by M.
  Hazewinkel.

\bibitem[McD94]{McD}
Dusa McDuff.
\newblock Singularities and positivity of intersections of {$J$}-holomorphic
  curves.
\newblock In {\em Holomorphic curves in symplectic geometry}, volume 117 of
  {\em Progr. Math.}, pages 191--215. Birkh\"{a}user, Basel, 1994.
\newblock With an appendix by Gang Liu.

\bibitem[MS04]{MSbook}
Dusa McDuff and Dietmar Salamon.
\newblock {\em {$J$}-holomorphic curves and symplectic topology}, volume~52 of
  {\em American Mathematical Society Colloquium Publications}.
\newblock American Mathematical Society, Providence, RI, 2004.

\bibitem[MW95]{MW}
Mario~J. Micallef and Brian White.
\newblock The structure of branch points in minimal surfaces and in
  pseudoholomorphic curves.
\newblock {\em Ann. of Math. (2)}, 141(1):35--85, 1995.

\bibitem[Mil68]{Milnor}
John Milnor.
\newblock {\em Singular points of complex hypersurfaces}.
\newblock Annals of Mathematics Studies, No. 61. Princeton University Press,
  Princeton, N.J.; University of Tokyo Press, Tokyo, 1968.

\bibitem[Oba18]{ObaFillings}
Takahiro Oba.
\newblock Higher-dimensional contact manifolds with infinitely many {S}tein
  fillings.
\newblock {\em Trans. Amer. Math. Soc.}, 370(7):5033--5050, 2018.

\bibitem[Oba20]{Oba}
Takahiro Oba.
\newblock {L}efschetz--{B}ott fibrations on line bundles over symplectic
  manifolds.
\newblock {\em International Mathematics Research Notices}, 07 2020.
\newblock rnaa144.

\bibitem[OS04]{OS}
Burak Ozbagci and Andr\'{a}s~I. Stipsicz.
\newblock Contact 3-manifolds with infinitely many {S}tein fillings.
\newblock {\em Proc. Amer. Math. Soc.}, 132(5):1549--1558, 2004.

\bibitem[Per07]{Per}
Tim Perutz.
\newblock Lagrangian matching invariants for fibred four-manifolds. {I}.
\newblock {\em Geom. Topol.}, 11:759--828, 2007.

\bibitem[Sei98]{SeiDehn}
Paul Seidel.
\newblock Symplectic automorphisms of {$T^{*} S^2$}.
\newblock {\em arXiv preprint math/9803084}, 1998.

\bibitem[Sei99]{SeiKnot}
Paul Seidel.
\newblock Lagrangian two-spheres can be symplectically knotted.
\newblock {\em J. Differential Geom.}, 52(1):145--171, 1999.

\bibitem[Sei00]{Seidel}
Paul Seidel.
\newblock Graded {L}agrangian submanifolds.
\newblock {\em Bull. Soc. Math. France}, 128(1):103--149, 2000.

\bibitem[Sei08a]{SeiBook}
Paul Seidel.
\newblock {\em Fukaya categories and {P}icard-{L}efschetz theory}.
\newblock Zurich Lectures in Advanced Mathematics. European Mathematical
  Society (EMS), Z\"{u}rich, 2008.

\bibitem[Sei08b]{Sei}
Paul Seidel.
\newblock Lectures on four-dimensional {D}ehn twists.
\newblock In {\em Symplectic 4-manifolds and algebraic surfaces}, volume 1938
  of {\em Lecture Notes in Math.}, pages 231--267. Springer, Berlin, 2008.

\bibitem[She00]{Shev}
Vsevolod Shevchishin.
\newblock Pseudoholomorphic curves and the symplectic isotopy problem.
\newblock {\em arXiv preprint math/0010262}, 2000.

\bibitem[Sie96]{Sie}
Bernd Siebert.
\newblock Gromov--witten invariants of general symplectic manifolds.
\newblock {\em arXiv preprint dg-ga/9608005}, 1996.

\bibitem[ST05]{ST}
Bernd Siebert and Gang Tian.
\newblock On the holomorphicity of genus two {L}efschetz fibrations.
\newblock {\em Ann. of Math. (2)}, 161(2):959--1020, 2005.

\bibitem[Sik03]{Sik}
Jean-Claude Sikorav.
\newblock The gluing construction for normally generic {$J$}-holomorphic
  curves.
\newblock In {\em Symplectic and contact topology: interactions and
  perspectives ({T}oronto, {ON}/{M}ontreal, {QC}, 2001)}, volume~35 of {\em
  Fields Inst. Commun.}, pages 175--199. Amer. Math. Soc., Providence, RI,
  2003.

\bibitem[Tev03]{Tel}
E.~A. Tevelev.
\newblock Projectively dual varieties.
\newblock volume 117, pages 4585--4732. 2003.
\newblock Algebraic geometry.

\bibitem[Wen15]{Wen_Lecture}
Chris Wendl.
\newblock Lectures on holomorphic curves in symplectic and contact geometry.
\newblock preprint,
  \url{https://www.mathematik.hu-berlin.de/~wendl/pub/jhol_bookv33.pdf}, 2015.

\bibitem[Wen18]{WenBook}
Chris Wendl.
\newblock {\em Holomorphic curves in low dimensions}, volume 2216 of {\em
  Lecture Notes in Mathematics}.
\newblock Springer, Cham, 2018.
\newblock From symplectic ruled surfaces to planar contact manifolds.

\bibitem[Wu14]{Wu}
Weiwei Wu.
\newblock Exact {L}agrangians in {$A_n$}-surface singularities.
\newblock {\em Math. Ann.}, 359(1-2):153--168, 2014.


\bibitem[Zho24]{Zhou_fillings}
Zhengyi Zhou.
\newblock A note on contact manifolds with infinite fillings.
\newblock {\em Arnold Mathematical Journal}, 2024.

\end{thebibliography}

\end{document}